\documentclass[12pt,reqno]{amsart}

\usepackage[T1]{fontenc}
\usepackage{enumitem}
\usepackage{hyperref}
\hypersetup{
  colorlinks   = true,
  citecolor    = blue,
  linkcolor    = blue
}

\usepackage{amsmath,amsfonts,amsthm,amssymb,color,tikz, comment}
\usepackage[english]{babel}
\usepackage{mathrsfs}
\usepackage{comment}

\usepackage{pdfsync}
\usepackage[font={scriptsize}]{caption}
\usepackage{bbm}
\usepackage{amsrefs}
\usepackage{mathtools}

\usepackage[left=1in, right=1in, top=1.1in,bottom=1.1in]{geometry}
\setlength{\parskip}{3.5pt}


\newcommand{\dw}{\dot{W}}


\newcommand{\D}{\mathbb D}
\newcommand{\E}{\mathbb E}
\newcommand{\R}{\mathbb R}



\newcommand{\cac}{\mathcal C}

\newcommand{\ch}{\mathcal H}
\newcommand{\ci}{\mathcal I}
\newcommand{\cj}{\mathcal J}

\newcommand{\cl}{\mathcal L}
\newcommand{\cm}{\mathcal M}

\newcommand{\cq}{\mathcal Q}

\newcommand{\crr}{\mathcal R}

\newcommand{\al}{\alpha}

\newcommand{\ga}{\gamma}

\newcommand{\ka}{\kappa}
\newcommand{\la}{\lambda}
\newcommand{\laa}{\Lambda}
\newcommand{\om}{\omega}

\newcommand{\si}{\sigma}
\newcommand{\te}{\theta}

\newcommand{\vp}{\varphi}

\newcommand{\lp}{\left(}
\newcommand{\rp}{\right)}
\newcommand{\lc}{\left[}
\newcommand{\rc}{\right]}

\newcommand{\lla}{\left\langle}
\newcommand{\rra}{\right\rangle}

\newtheorem{theorem}{Theorem}[section]

\newtheorem{corollary}[theorem]{Corollary}

\newtheorem{definition}[theorem]{Definition}

\newtheorem{assumption}[theorem]{Assumption}
\newtheorem{lemma}[theorem]{Lemma}

\newtheorem{proposition}[theorem]{Proposition}

\theoremstyle{remark}
\newtheorem{remark}[theorem]{Remark}

\theoremstyle{remark}
\newtheorem{example}[theorem]{Example}

\newcommand{\bean}{\begin{eqnarray*}}
\newcommand{\eean}{\end{eqnarray*}}
\newcommand{\ben}{\begin{enumerate}}
\newcommand{\een}{\end{enumerate}}
\newcommand{\beq}{\begin{equation}}
\newcommand{\eeq}{\end{equation}}

\begin{document}

\def\Pro{{\mathbb{P}}}
\def\E{{\mathbb{E}}}
\def\e{{\varepsilon}}
\def\veps{{\varepsilon}}
\def\ds{{\displaystyle}}
\def\nat{{\mathbb{N}}}
\def\Dom{{\textnormal{Dom}}}
\def\dist{{\textnormal{dist}}}
\def\R{{\mathbb{R}}}
\def\O{{\mathcal{O}}}
\def\T{{\mathcal{T}}}
\def\Tr{{\textnormal{Tr}}}
\def\sgn{{\textnormal{sign}}}
\def\I{{\mathcal{I}}}
\def\A{{\mathcal{A}}}
\def\H{{\mathcal{H}}}
\def\S{{\mathcal{S}}}

\title
[Stochastic heat equation with non-Lipschitz reaction term]
{Regularity of the law of solutions to the stochastic heat equation with non-Lipschitz reaction term}

\author[M. Salins \and S. Tindel ]
{Michael Salins \and Samy Tindel}

 \address{
 Michael Salins:
 Department of Mathematics and Statistics,
 Boston University,
 665 Commonwealth Ave,
 Boston, MA, 02215, 
 USA
 }
 \email{msalins@bu.edu}

 \address{Samy Tindel:
 Department of Mathematics,
Purdue University,
150 N. University Street,
W. Lafayette, IN 47907,
USA.}
\email{stindel@purdue.edu}

\maketitle
\begin{abstract}
We prove the existence of density for the solution to the multiplicative semilinear stochastic heat equation on an unbounded spatial domain, with drift term satisfying a half-Lipschitz type condition. The methodology is based on a careful analysis of differentiability for a map defined on weighted functional spaces.
\end{abstract}

%
%


\section{Introduction} \label{S:intro}

In this paper we consider a stochastic heat equation on $\R^{d}$ of the form
\begin{equation} \label{eq:SPDE}
  \frac{\partial u}{\partial t}(t,x) = \frac{1}{2} \Delta u(t,x) + f(u(t,x)) + \sigma(u(t,x)) \, \dot{W}(t,x) \, ,
\qquad t\in [0,T], \, x \in\R^{d}.
\end{equation}
In \eqref{eq:SPDE}, $\Delta$ denotes the Laplace operator, $\dw$ is a centered Gaussian noise which is white in time and whose covariance function satisfies a standard assumption called strong Dalang condition (see Assumption~\ref{assum:dalang} below for a precise statement). The coefficient $\si$ in~\eqref{eq:SPDE} is supposed to be differentiable with bounded derivative. Our equation deviates from the standard setting for stochastic \textsc{pde}s due to the drift coefficient $f$. This coefficient is only assumed to verify a mild damping condition, that is we suppose that $f$ is continuously differentiable and that $f'$ is upper bounded by a constant $\ka\in\R$:
\begin{equation}\label{f0}
f'(u) \leq \kappa , \quad\text{ for all }\quad u \in \mathbb{R} .
\end{equation}
This condition will be referred to as half-Lipschitz in the sequel. As a motivating example, any odd degree polynomial with a negative leading coefficient such as $f(u) = -u^3 + u$ will satisfy \eqref{f0}.
Under this setting, we investigate the law of the random field mild solution to~\eqref{eq:SPDE}, $u(t,x)$, at a fixed time $t>0$ and a fixed point in space $x \in \mathbb{R}^d$. We prove using Malliavin calculus that the law has a density.

Stochastic \textsc{pde}s have been primarily considered for globally Lipschitz continuous coefficients $f$ and $\si$ (see e.g~\cite{dalang,Wa}). However, since multiple relevant physical systems involve polynomial type nonlinearities, a substantial amount of effort has been devoted to that case over the past decades. Among those contributions, one can single out the following:
\begin{enumerate}[wide, labelwidth=!, labelindent=0pt, label={(\alph*)}]
\setlength\itemsep{.05in}

\item
The case of $x\in D$ (with $D$ a bounded domain in $\R^{d}$), and polynomial nonlinearities $f$ with negative leading terms, has been investigated in~\cite{cerrai,mnq-2013}. The techniques use localization arguments based on stopping time methods and a priori bounds. The papers~\cite{cerrai,mnq-2013} are all handling the case of a colored noise $\dw$ which can accommodate stochastic integrals without a need for renormalization.

\item
The case of a stochastic heat equation defined on an unbounded spatial domain $\mathbb{R}^d$ with $f$ satisfying
\eqref{f0} and with at most polynomial growth was first investigated independently by Iwata \cite{iwata} and Brzezniak and Peszat \cite{BrzPe}. Unlike in the bounded domain setting, the solutions to \eqref{eq:SPDE} are unbounded in space if $\sigma$ is bounded away from zero. Specifically, for any $t>0$, $\Pro(\sup_{x \in \mathbb{R}^d} |u(t,x)| = +\infty) =1$.


\item
The variational methods of R\"ockner and collaborators can be applied when the perturbing noise is trace-class, so that It\^o formula methods are available \cite{MarRo,PreRo,LiuRo}. This theory allows for \textsc{spde}s that are not semilinear, such as porous medium equations, but precludes  rough perturbations like space-time white noise.

\item
In case of a space-time white noise $\dw$ (or even a spatial white noise if $x\in\R^{d}$ with $d\ge 2$), renormalization tools are in order. We cannot list all the relevant contributions in this direction. Let us just mention~\cite{Ha} for the celebrated \textsc{kpz} equation and~\cite{MW} for the $\Phi_{4}^{3}$ model. Notice that most of those systems only admit an additive noise, and that the current techniques only yield local (in time) solutions.

  \end{enumerate}


Studies of densities for stochastic processes in non-Markovian settings have been one of the great achievements of Malliavin calculus. However, due to a methodology based on differentiation and integration by parts, Malliavin calculus results usually require smooth and bounded coefficients in differential systems like \eqref{eq:SPDE}. This is certainly the case in classical references concerning stochastic \textsc{pde}s \cite{BP,MMS,nualart-book} or systems driven by a fractional Brownian motion~\cite{BNOT,CHLT}. A more recent trend has been to adapt the integration by parts technology to settings with little regularity or less restrictive growth assumptions. One can quote the following studies, which are close in spirit to our own contribution:

\begin{enumerate}[wide, labelwidth=!, labelindent=0pt, label={(\alph*)}]
\setlength\itemsep{.05in}

\item
The article \cite{HKY} deals with a stochastic differential equation driven by an additive Brownian motion, whose drift coefficient lies in a fractional Sobolev space of the form $W^{\ga,p}$ (with a regularity parameter $\ga\in(0,1]$). The computations therein combine Malliavin calculus and Girsanov transform tools.

\item
For stochastic differential equations driven by a fractional Brownian motion let us mention the paper~\cite{FZ}, which handles the case of a H\"older drift. This is achieved thanks to a smart limiting procedure taken on Euler schemes. More recently, the preprint~\cite{LPS} explores densities for a drift coefficient $f$ which has linear growth and satisfies a mild damping condition. The main tools in~\cite{LPS} is Girsanov's transform, again due to the fact that an equation with additive noise is considered. The density is then analyzed by importing arguments from the regularization by noise literature and investigating a functional for a fractional bridge.

\item
In \cite{mnq-2013,mq-2019} the authors consider a \textsc{spde} of the form~\eqref{eq:SPDE}, satisfying an assumption which is similar to~\eqref{f0}. The main difference between this setting  and ours is twofold: first~\cite{mnq-2013,mq-2019} focuses on spatial variables in bounded sets of $\R^{d}$, while our result is concerned with $x$ in the whole space $\R^{d}$. Then~\cite{mnq-2013,mq-2019} is restricted to coefficients $f$ in~\eqref{eq:SPDE} having polynomial growth, while we can reach exponential growth in the current paper. Notice that in~\cite{mq-2019} the strategy is based on a localization procedure relying on Lipschitz approximations of the drift coefficient $f$. This method is ruled out in our unbounded domain setting. Indeed, in our case the field $\{u(t,x); \, x \in \R^{d}   \}$ is unbounded for any fixed $t>0$, even if $f$ is Lipschitz. The boundedness of $u(t,\cdot)$ whenever $f$ is Lipschitz was a crucial ingredient in~\cite{mq-2019}.

\end{enumerate}

\noindent
As one can see, our result is thus the first one establishing existence of density for a \textsc{spde} with drift whose first derivative is unbounded and that is defined on a noncompact domain. On top of this novel aspect, we believe that our method of proof is applicable to other settings. In some subsequent publication we plan to apply the techniques developed here to the renormalized frameworks mentioned above.

{In future work, we also wish to remove the growth restriction on $f$. While many previous works restricted the growth rate of $f$ to polynomial growth like $|f(u)| \leq C(1 + |u|^p)$ \cite{iwata,BrzPe,cerrai,LiuRo}, we allow $f$ to grow as fast as $|f(u)|\leq K e^{K|u|^\nu}$ for any $K,\nu>0$ like in \cite{s-2021}. The exponential growth restriction is helpful for proving that the integrals $\int_0^t \int_{\mathbb{R}^d}G(t-s,x-y)f(u(s,y))dyds$  in the mild solution are well-defined. These growth restrictions do not seem necessary, however, and in future work we hope to prove that the half-Lipschitz condition on $f$ \eqref{f0} along with appropriate assumptions on $\sigma$ and the $\dot{W}$, is sufficient to guarantee the existence and uniqueness of mild solutions and the existence of a density. Such a generalization requires sensitive analysis of the spatial growth rates of solutions and is outside of the scope of the current manuscript.}

As mentioned above,  the solutions to \eqref{eq:SPDE} with $x\in\R^{d}$ are unbounded and heat equations enjoy infinite propagation speed. Therefore the localization arguments that are invoked in the bounded domain case \cite{mnq-2013,mq-2019} cannot be applied to the unbounded domain setting. To investigate properties of unbounded solutions, many researchers have introduced a spatial weight. For example, Iwata \cite{iwata} and Brzezniak and Peszat \cite{BrzPe} used exponential weights $\sup_x e^{-\lambda |x|} |u(t,x)|$. The choice of exponential weights, unfortunately, introduced a polynomial growth restriction in the literature.
With this observation in mind, the first author of this paper proposed in~\cite{s-2021} a new method to handle equations like~\eqref{eq:SPDE}.
Roughly speaking, in this paper and in \cite{s-2021} we use polynomial weights $\sup_{x \in \mathbb{R}^d} \frac{|u(t,x)|}{1 + |x-x_0|^\theta}$ for arbitrarily small $\theta>0$. This choice of weights allows to prove the main results for superlinear half-Lipschitz reaction terms that grow as fast as $|f(u)| \leq K \exp(K|u|^\gamma)$ for any $K,\gamma>0$.

In order to explain how we obtained the existence of a density for the solution to~\eqref{eq:SPDE}, let us give a few details  about the approach in~\cite{s-2021}.
The scheme therein basically splits the dynamics in two pieces: first a stochastic map $\ci$ defined for a random field $\vp$ by
\begin{equation*}
\ci(t,x) = \ci^\varphi(t,x) = \int_0^t \int_{\mathbb{R}^d} G(t-s,x-y) \varphi(s,y)W(ds \, dy) \, ,
\end{equation*}
where $G(t,x):= (2 \pi)^{-\frac{d}{2}} \exp (- |x|^2/2 )$ is the standard Gaussian heat kernel.
This map is properly introduced in~\eqref{eq:def-Phi} below. The second piece of our dynamics is a deterministic map called $\cm$ (see Definition~\ref{def:M}) given for a continuous function $z$ defined on $[0,T]\times\R^{d}$ as the solution of the following integral equation
\begin{equation}\label{f1}
\mathcal{M}(z)(t,x) = \int_0^t \int_{\mathbb{R}^d} G(t-s,x-y) f(\mathcal{M}(z)(s,y)) \, dyds + z(t,x).
\end{equation}
The crucial point in~\cite{s-2021} is that, despite the fact that $f$ in~\eqref{f1} in not globally Lipschitz continuous, the map $\cm$ is globally Lipschitz continuous on weighted spaces of continuous functions on $[0,T]\times\R^{d}$. Thanks to some thorough estimates for both $\ci$ and $\cm$ and a Yosida type approximation procedure for the function $f$, one can prove existence and uniqueness for mild solutions to~\eqref{eq:SPDE}.
More specifically, the \textit{mild solution} of \eqref{eq:SPDE} is defined to be a random field that solves the integral equation
\begin{align} \label{eq:intro-mild}
  u(t,x) = &\int_{\mathbb{R}^d} G(t-s,x-y) u(0,y)dy + \int_0^t \int_{\mathbb{R}^d} G(t-s,x-y) f(u(s,y))dyds \nonumber\\
  &+ \int_0^t \int_{\mathbb{R}^d} G(t-s,x-y) \sigma(u(s,y))W(ds \, dy).
\end{align}
Letting $U_0(t,x) = \int_{\mathbb{R}^d} G(t-s,x-y) u(0,y)dy$ and $\ci^\varphi(t,x)$ and $\cm$ be the maps defined above, equation~\eqref{eq:intro-mild} can be recast as
\begin{equation}
  u(t,x) = \mathcal{M} \left( U_0 + \ci^{\sigma(u)} \right)(t,x).
\end{equation}
The existence and uniqueness of the solution to this equation was then established in \cite{s-2021} via a Picard iteration scheme. Namely, we can recursively define
 \begin{equation} \label{eq:intro-Picard}
   u_0(t,x) = U_0(t,x), \ \ \ \ \ u_{n+1}= \mathcal{M} (U_0 + \ci^{\sigma(u_n)}).
 \end{equation}
By properly bounding both maps $\ci^{\vp}$ and $\cm$, the existence and uniqueness for equation~\eqref{eq:SPDE} is proved thanks to a fixed point argument.

We can now explain our global method for the existence of density result and outline the structure of our paper.
First in Section \ref{sec:approach} we introduce the main assumptions and recall the existence and uniqueness results for \eqref{eq:SPDE} from \cite{s-2021}, as well as Malliavin calculus results from~\cite{nualart-book}.
Then we proceed to prove the Malliavin differentiability of $u(t,x)$ via the approximation scheme~\eqref{eq:intro-Picard}, and also by studying the Malliavin differentiability of the maps $\ci^\varphi$ and $\cm$. To begin with, Section \ref{sec:mall-diff-M} proves Proposition \ref{prop:M-Malliavin}. This result states that if a random field $z : [0,T] \times \mathbb{R}^d \times \Omega \to \mathbb{R}$ has the property that $z(t,x)$ is Malliavin differentiable for any $(t,x) \in [0,T]\times \mathbb{R}^d$ and additionally that $z(t,x)$ and $Dz(t,x)$ satisfy certain polynomial growth assumptions in the spatial variable, then the random field $\mathcal{M}(z)(t,x)$ is also Malliavin differentiable for any $(t,x) \in [0,T]\times \mathbb{R}^d$. Furthermore, Proposition \ref{prop:M-Malliavin} establishes a weighted supremum norm bound which holds with probability one:
\begin{equation}
  \sup_{t\in [0,T]} \sup_{x \in \mathbb{R}^d} \frac{|D\mathcal{M}(z)(t,x)|_{\H_T}}{1 + |x-x_0|^\theta} \leq K \sup_{t \in [0,T]} \sup_{x \in \mathbb{R}^d} \frac{ |Dz(t,x)|_{\H_T}}{1 + |x-x_0|^\theta} \, ,
\end{equation}
where $\ch_{T}$ is the natural Cameron-Martin space related to our colored noise (see~\eqref{eq:H_T} below for a proper definition of the inner product in $\ch_{T}$).

The Malliavin differentiability of the stochastic integrals $\ci^\varphi$ when $\varphi$ is Malliavin differentiable is a standard result from the theory of Malliavin calculus (see Proposition \ref{prop:mall-diff-for-stoch-intg} below). In Section~\ref{sec:mall-diff-I} we prove that certain moment estimates of weighted supremum norms, that applied to stochastic integrals $\ci^\varphi(t,x)$ when $\varphi$ is real-valued, will also hold in the case where $\varphi$ is Hilbert-space valued. Specifically, we apply these results to derive estimates on the Malliavin derivatives of the stochastic integral terms.
In Section \ref{sec:mall-diff} we apply these Malliavin differentiability results about $\ci^\varphi$ and $\mathcal{M}$ to the recursively defined Picard iteration scheme introduced in \eqref{eq:intro-Picard}. In particular, this allows us to prove that  $u_n(t,x)$ is Malliavin differentiable for all $n \in \mathbb{N}$, $t \in [0,T]$, and $ x \in \mathbb{R}^d$. Furthermore, we prove that for any $p>0$ and $T>0$,
\begin{equation}
  \sup_n \sup_{x_0 \in \mathbb{R}^d} \E \left| \sup_{t \in [0,T]} \sup_{x \in \mathbb{R}^d} \frac{|Du_n(t,x)|_{\H_T}}{1 + |x-x_0|^\theta} \right|^p< +\infty \, .
\end{equation}
In particular, these weighted supremum moment bounds guarantee that $\sup_n E|Du_n(t,x)|_{\H_T}^2$ is finite for any fixed $t>0$ and $x \in \mathbb{R}^d$. The classical result \cite[Lemma 1.2.3]{nualart-book} then guarantees that $u(t,x)$ is Malliavin differentiable.

Finally, in Section \ref{sec:positivity} we prove that the Malliavin derivative of the mild solution is positive almost surely, that is $\Pro(\|Du(t,x)\|_{\H_T}>0)=1$. By \cite[Theorem 2.1.2]{nualart-book}, this positivity property implies that the law of $u(t,x)$ is absolutely continuous with respect to Lebesgue measure. We prove the positivity by constructing a particular family of deterministic test functions $h_{t,x,\delta} \in \H_T$ and proving that, with probability one, the directional Malliavin derivative $D_{h_t,x,\delta}u(t,x)$ is non-negative for some small (and random) $\delta>0$. This analysis involves writing the directional derivatives in a mild form
\[D_h u(t,x) = \left< \phi_{t,x},h \right>_{\H_T} + A_h(t,x) + B_h(t,x) \, ,
\]
where $A_h(t,x)$ is  a Lebesgue integral and $B_h(t,x)$ is a stochastic integral. We prove that when $\delta$ is sufficiently small, the integral terms are much smaller than the leading term, implying that the directional Malliavin derivative is non-negative.

\section{Approach to existence and uniqueness} \label{sec:approach}
In this section we will summarize the method employed in \cite{s-2021} in order to solve an equation like \eqref{eq:SPDE} with a half-Lipschitz reaction term. The method is based on a fixed point argument in an appropriate weighted H\"older space. We also include a minimal set of Malliavin calculus tools necessary to carry out our main computations.

\subsection{Functional space, assumptions and existence result}
We start by defining the weighted function spaces which will be used throughout the paper.
\begin{definition}\label{def:weighted-cont-space}
  Let $\theta>0$ be a positive parameter {and let $x_0 \in \mathbb{R}^d$}. The space $\cac_{\theta,x_0}([0,T]\times \mathbb{R}^d)$ designates the set of continuous functions
  \begin{equation}
    \left\{ z \in \cac([0,T]\times \mathbb{R}^d) : \lim_{|x| \to \infty} \sup_{t \in [0,T]} \frac{|z(t,x)|}{1 + |x-x_0|^\theta} = 0 \right\}.
  \end{equation}
  The space is endowed with the weighted supremum norm
  \begin{equation}
    |z|_{\cac_{\theta,x_0}([0,T]\times\mathbb{R}^d)} : = \sup_{t \in [0,T]} \sup_{x \in \mathbb{R}^d} \frac{|z(t,x)|}{1 + |x-x_0|^\theta}.
  \end{equation}
  For fixed $\theta$, the spaces $\cac_{\theta,x_0}([0,T]\times \mathbb{R}^d)$ all coincide, but it is convenient to use different centers of the weight $x_0$.
\end{definition}

Next we state the assumptions on the stochastic noise $\dot{W}$. All of the random variables below are defined on a complete probability space $(\Omega, \mathcal{F}, \Pro)$ equipped with a filtration $\{\mathcal{F}_t: t \geq 0\}$.

\begin{assumption} \label{assum:dalang}
  The noise $\dot{W}$ in \eqref{eq:SPDE} is a centered Gaussian spatially homogeneous noise which is white in time. There exists a positive and positive definite function $\Lambda$ such that formally
  \begin{equation}\label{eq:def-covariance-formal}
    \E[\dot{W}(t,x) \dot{W}(s,y)] = \delta(t-s)\Lambda(x-y).
  \end{equation}
  In the above expression, $\delta$ is the Dirac measure. The Fourier transform of $\Lambda$ is a measure $\mu$ and we assume that there exists $\eta \in (0,1)$ such that
  \begin{equation}\label{eq:strong-dalang}
    \int_{\mathbb{R}^d} \frac{1}{1 + |\xi|^{2(1-\eta)}} \mu(d\xi) < +\infty.
  \end{equation}
\end{assumption}

The multiplicative noise coefficient $\sigma$ in \eqref{eq:SPDE} satisfies  standard differentiability and nondegeneracy assumptions.

\begin{assumption} \label{assum:sig}
  The noise coefficient $\sigma: \mathbb{R} \to \mathbb{R}$ is differentiable and its derivative is uniformly bounded.  Moreover, we assume that there exists $\alpha>0$ such that
  \begin{equation} \label{eq:sigma-non-degen}
    \sigma(u) \geq \alpha , \quad\text{ for all }\quad u \in \mathbb{R}.
  \end{equation}
\end{assumption}

As mentioned in the introduction, our system \eqref{eq:SPDE} departs from the standard stochastic \textsc{pde} setting due to the drift coefficient $f$. Namely we only suppose that $f$ in \eqref{eq:SPDE} satisfies a half-Lipschitz condition,  is differentiable, and obeys a very mild growth condition. This is summarized in the assumption below.
\begin{assumption} \label{assum:f}
  The reaction term $f:\mathbb{R} \to \mathbb{R}$ is continuously differentiable. Moreover, there exists $\kappa \in \mathbb{R}$ such that the derivative is uniformly bounded from above
  \begin{equation} \label{eq:f-deriv-bound}
    f'(u) \leq \kappa , \quad\text{ for all }\quad u \in \mathbb{R} .
  \end{equation}
  We further assume that there exist $K>0$, $\nu>0$ such that
  \begin{equation} \label{eq:f-exp-bound}
    |f'(u)| \leq K \exp \left(K |u|^\nu \right)
  \end{equation}
\end{assumption}
Notice that the upper bound on the first derivative \eqref{eq:f-deriv-bound} implies that $f: \mathbb{R} \to \mathbb{R}$ is \textit{half-Lipschitz}, meaning that for any $u_1>u_2$,
  \begin{equation} \label{eq:half-Lip}
    f(u_1) - f(u_2) \leq \kappa (u_1 - u_2) .
  \end{equation}
We now label a standard assumption for the initial condition $u_{0}$ for our equation of interest.
  \begin{assumption} \label{assum:init-cond}
    The initial condition for \eqref{eq:SPDE} is continuous and uniformly bounded, meaning that there exists $M>0$ such that
    \begin{equation} \label{eq:init-cond-bound}
      \sup_{x \in \mathbb{R}^d} |u_0(x)| \leq M.
    \end{equation}
  \end{assumption}


  Equation \eqref{eq:SPDE} is solved in the so-called mild sense. Namely an adapted process $u$ is said to solve~\eqref{eq:SPDE} with initial condition $u_{0}$ if for all $(t,x)\in[0,T]\times \mathbb{R}^d$ we have
  \begin{align}\label{eq:mild-solution}
  u(t,x) = &\int_{\R^{d}} G(t,x-y) u_0(y)dy + \int_0^t \int_{\R^{d}} G(t-s,x-y) f(u(s,y))dyds \nonumber \\
  &+ \int_0^t \int_{\R^{d}} G(t-s,x-y)\sigma(u(s,y)) \, W(ds \, dy) \, ,
\end{align}
where $G(t,x)$ is the heat kernel related to the Laplace operator in \eqref{eq:SPDE}. In \eqref{eq:mild-solution}, the stochastic integral is interpreted in the It\^o sense (see \cite{dalang} for further details).
With this notion of solution and under the set of assumptions spelled out above, we now recall the main existence and uniqueness result from \cite{s-2021}.

  \begin{theorem}[Theorem 2.6 of \cite{s-2021}]
    Suppose Assumptions \ref{assum:sig}--\ref{assum:init-cond} are satisfied. Then there exists a unique mild solution to \eqref{eq:SPDE}. This solution lives in the space $C_\theta([0,T]\times \mathbb{R}^d)$ introduced in Definition~\ref{def:weighted-cont-space}.
  \end{theorem}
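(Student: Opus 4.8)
The plan is to recast the mild equation~\eqref{eq:mild-solution} as the fixed point problem $u = \cm\big(U_0 + \ci^{\si(u)}\big)$, where $U_0(t,x) = \int_{\R^d} G(t,x-y)u_0(y)\,dy$, and to solve it by the Picard scheme~\eqref{eq:intro-Picard}. This requires a separate study of the deterministic map $\cm$ of~\eqref{f1} and of the stochastic convolution $\ci^\varphi$, both viewed as maps on the weighted spaces $\cac_{\theta,x_0}([0,T]\times\R^d)$ of Definition~\ref{def:weighted-cont-space}, and then a contraction argument combining the two.

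The heart of the matter is the map $\cm$. When $f$ is globally Lipschitz, $\cm(z)$ is produced by a standard Picard iteration on~\eqref{f1}. To get a Lipschitz estimate for $z \mapsto \cm(z)$ whose constant does not depend on the Lipschitz norm of $f$, I would write $f(u) = \ka u + g(u)$ with $g := f - \ka\,\mathrm{id}$, so that $g' = f' - \ka \le 0$ and $g$ is non-increasing. If $v_i = \cm(z_i)$ and $w = v_1 - v_2$, then
\[
w(t,x) = \iot\!\int_{\R^d} G(t-s,x-y)\Big[\ka\, w(s,y) + g\big(v_1(s,y)\big) - g\big(v_2(s,y)\big)\Big]\,dy\,ds + (z_1 - z_2)(t,x),
\]
and since $\big(g(v_1)-g(v_2)\big)\,\sgn(v_1-v_2) \le 0$ while the heat kernel is positivity preserving, smoothing $\sgn$ (or testing the underlying \textsc{pde} against $\sgn(w)$ via Kato's inequality) gives
\[
|w(t,x)| \le \ka^{+}\iot\!\int_{\R^d} G(t-s,x-y)\,|w(s,y)|\,dy\,ds + |z_1 - z_2|_{\cac_{\theta,x_0}}\big(1 + |x - x_0|^\theta\big).
\]
Together with the weight bound $\int_{\R^d} G(r,x-y)\big(1+|y-x_0|^\theta\big)\,dy \le C_{\theta,T}\big(1+|x-x_0|^\theta\big)$ valid for $r \le T$ and a Gronwall argument in the weighted norm, this shows that $\cm$ is globally Lipschitz on $\cac_{\theta,x_0}([0,T]\times\R^d)$ with a constant depending only on $\ka$, $T$ and $\theta$. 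The genuinely half-Lipschitz $f$ of Assumption~\ref{assum:f} is then reached by replacing it with its Yosida approximations $f_\la$, which are globally Lipschitz and still obey~\eqref{eq:f-deriv-bound}--\eqref{eq:f-exp-bound} uniformly; the maps $\cm_\la$ are uniformly Lipschitz, so one passes to the limit to define $\cm$ for $f$ itself and checks that the limit still solves~\eqref{f1}. A similar sign computation (using $g(v)\sgn(v) \le |g(0)|$) produces the a priori bound that prevents $\cm(z)$ from blowing up even though $f$ is superlinear. Finally, the growth condition~\eqref{eq:f-exp-bound} and the freedom to take $\theta$ small enter here: an element of $\cac_{\theta,x_0}$ grows at most like $|x|^\theta$, so $f\big(\cm(z)(s,y)\big)$ grows at most like $\exp(K|y|^{\theta\nu})$, which is integrable against the Gaussian kernel $G$ provided $\theta\nu < 2$, ensuring $\cm(z) \in \cac_{\theta,x_0}$.

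For the stochastic part, the Dalang--Walsh stochastic integral under the strong Dalang condition~\eqref{eq:strong-dalang} makes $\ci^\varphi(t,x)$ well defined for adapted, possibly spatially growing integrands $\varphi$, and the factorization method combined with~\eqref{eq:strong-dalang} yields moment bounds $\E\big|\ci^\varphi(t,x) - \ci^\varphi(s,y)\big|^p \lesssim \big(|t-s| + |x-y|^2\big)^{p\beta}$ for some $\beta = \beta(\eta) > 0$; Kolmogorov's criterion then provides a continuous modification, and the weighted version of the same estimate gives $\E\,|\ci^\varphi|_{\cac_{\theta,x_0}}^p < +\infty$ controlled by the weighted norm of $\varphi$. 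Since $\si$ has at most linear growth by Assumption~\ref{assum:sig}, these bounds survive composition with $\si$ and the iteration.

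It then remains to run the Picard scheme~\eqref{eq:intro-Picard}: each $u_n$ belongs to $\cac_{\theta,x_0}$ with moment bounds uniform in $n$ (this is where the growth estimates must be made to close), and combining the uniform Lipschitz bound for $\cm$, the Lipschitz bound for $\si$, and the moment estimate for $\ci$ applied to $\si(u_n) - \si(u_{n-1})$ yields, first on a short time interval and then by iterating over $[0,T]$, an inequality of the form $\E\,\|u_{n+1} - u_n\|_{t}^p \le C \iot \E\,\|u_n - u_{n-1}\|_{s}^p\,ds$ for the time-localized weighted norms $\|\cdot\|_t$; this is summable, so $(u_n)$ is Cauchy in $L^p(\Omega; \cac_{\theta,x_0})$ and its limit $u$ solves the fixed point equation, i.e.\ \eqref{eq:mild-solution}, and lies in $\cac_\theta([0,T]\times\R^d)$, while uniqueness follows from the same contraction estimate applied to two solutions. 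The hard part will be the analysis of $\cm$: extracting from the mere half-Lipschitz bound~\eqref{eq:f-deriv-bound} a global Lipschitz constant for $\cm$ that is blind to the Lipschitz norm of $f$ — so that the Yosida limit can be taken — while simultaneously controlling the spatial growth so that all iterates remain in $\cac_\theta$, which is precisely what forces $\theta$ to be small relative to $1/\nu$.
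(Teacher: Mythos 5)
Your overall architecture is the same as the paper's (and as \cite{s-2021}, to which the paper defers for this theorem): split the dynamics into the deterministic map $\cm$ and the stochastic convolution $\ci^{\vp}$, prove that $\cm$ is Lipschitz on $\cac_{\theta,x_0}([0,T]\times\R^d)$ with a constant depending only on $\ka$, $T$, $\theta$ (obtained via Yosida approximations $f_\la$), prove weighted moment bounds for $\ci^{\vp}$, and close a Picard iteration. The treatment of the stochastic integral, the role of the restriction $\theta\nu<2$, and the Yosida limiting procedure all match the paper's outline.

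There is, however, a genuine gap at the crux, namely your derivation of the Lipschitz estimate for $\cm$. Writing $f=\ka\,\mathrm{id}+g$ with $g$ non-increasing and $w=\cm(z_1)-\cm(z_2)$, you claim that positivity of $G$ together with $(g(v_1)-g(v_2))\,\sgn(v_1-v_2)\le 0$ yields
\[
|w(t,x)| \le \ka^{+}\iot\int_{\R^d} G(t-s,x-y)\,|w(s,y)|\,dy\,ds + |z_1-z_2|_{\cac_{\theta,x_0}}\bigl(1+|x-x_0|^\theta\bigr).
\]
This does not follow as stated. In the mild formulation the monotone term is integrated against $G(t-s,x-y)$ over $(s,y)$, and its favorable sign is relative to $\sgn(w(s,y))$, not to $\sgn(w(t,x))$, so it cannot simply be discarded. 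If instead you test a pointwise \textsc{pde} for $w$ against $\sgn(w)$ via Kato's inequality, you must first remove the additive term $z_1-z_2$, which is merely continuous and satisfies no equation; but the remainder $\tilde v=(\cm(z_1)-z_1)-(\cm(z_2)-z_2)$ has $\sgn(\tilde v)$ in general different from $\sgn(\cm(z_1)-\cm(z_2))$, and the half-Lipschitz condition \eqref{eq:half-Lip} controls $f(u_1)-f(u_2)$ only from above in the direction $\sgn(u_1-u_2)$ --- in the mismatched-sign case there is no bound at all. (The natural splitting $f(v_1+z_1)-f(v_2+z_2)=[f(v_1+z_1)-f(v_2+z_1)]+[f(v_2+z_1)-f(v_2+z_2)]$ fails for the same reason: the second bracket would require a two-sided Lipschitz bound on $f$.) The paper's proof of Theorem~\ref{thm:M-Lip-inhomo} supplies the missing idea: pass to the weighted quotient $\tilde q=\tilde v/\rho$, evaluate its \textsc{pde} at a spatial maximizer $x_t$ (where $\nabla\tilde q=0$ and $\Delta\tilde q\,\sgn(\tilde q)\le 0$), and split into the case $|\tilde q(t,\cdot)|_{\cac_0}>|\tilde z(t,\cdot)|_{\cac_0}$, where the relevant signs coincide and the half-Lipschitz bound applies, and the case $|\tilde q(t,\cdot)|_{\cac_0}\le|\tilde z(t,\cdot)|_{\cac_0}$, where no differential inequality is needed because $|\tilde q|$ is already controlled; Gronwall is then applied to $\max\{|\tilde q(t,\cdot)|_{\cac_0},M\}$. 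Without this case split your Lipschitz constant for $\cm$ is not justified, and the Yosida limit and the fixed point argument downstream both rest on it.
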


\subsection{Methodology}
  In this section we review the methods used to solve \eqref{eq:SPDE} in \cite{s-2021}. Those tools will also play a prominent role in analyzing the Malliavin derivative of the solution.

  \subsubsection{Yosida approximations}\label{sec:yosida}
A crucial ingredient in the analysis of equation \eqref{eq:mild-solution} is based on Yosida approximations for the  nonlinear forcing term $f$ satisfying Assumption \ref{assum:f}. That is
for any function $f$ satisfying \eqref{eq:f-deriv-bound} or \eqref{eq:half-Lip} it is easily seen (see \cite[Proposition 2.4]{s-2021}) that
\begin{equation} \label{eq:f=phi+line}
  f(u) = \phi(u) + \kappa u \, ,
\end{equation}
where $\phi$ is non-increasing.
For  $\phi:\mathbb{R} \to \mathbb{R}$ that is non-increasing, we define the Yosida approximations for $\lambda>0$ by
\begin{equation}\label{eq:def-yosida}
  \phi_\lambda(u):= \frac{1}{\lambda}(J_\lambda(u) - u) \text{ where } J_\lambda(u) = (I - \lambda \phi)^{-1}(u).
\end{equation}
The family $\{\phi_\lambda: \lambda>0\}$ is intended to be a smooth approximation of $\phi$ under monotonicity conditions.
We now summarize some properties of the Yosida approximations, taken from   \cite[Appendix D]{dpz-book}.
\begin{lemma}
Let $\phi:\mathbb{R} \to \mathbb{R}$ be a differentiable non-increasing function and let $\{\phi_\lambda: \lambda>0\}$ be its Yosida approximations defined by \eqref{eq:def-yosida}. Then the following are true.
\begin{enumerate}[wide, labelwidth=!, labelindent=0pt, label=\emph{(\roman*)}]
\setlength\itemsep{.02in}
    \item $|\phi_\lambda(u_1) - \phi_\lambda(u_2)| \leq \frac{2}{\lambda} |u_1-u_2|$, for $u_{1},u_{2}\in\R$ and all $\la>0$.
    \item $|\phi_\lambda(u)| \leq |\phi(u)|$, for $u\in\R$ and all $\la>0$.
    \item $u \mapsto \phi_\lambda(u)$ is nonincreasing, for all $\la>0$.
    \item $\lim_{\lambda \to 0} \phi_\lambda(u) = \phi(u)$, for all $u\in\R$.
    \item $\lim_{\lambda \to 0} \phi_\lambda'(u) = \phi'(u)$, for all $u\in\R$.
  \end{enumerate}
\end{lemma}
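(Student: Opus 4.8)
The plan is to derive all five statements from two elementary facts about the resolvent $J_\lambda=(I-\lambda\phi)^{-1}$: that it is a well-defined, non-decreasing, $1$-Lipschitz bijection of $\R$ onto itself, and that it obeys the identity
\[
  \phi_\lambda(u)=\phi\bigl(J_\lambda(u)\bigr)=\tfrac1\lambda\bigl(J_\lambda(u)-u\bigr),
\]
which is merely a rewriting of the defining relation $J_\lambda(u)-\lambda\phi(J_\lambda(u))=u$. First I would check well-definedness: since $\phi$ is differentiable and non-increasing, $g_\lambda:=I-\lambda\phi$ has $g_\lambda'(u)=1-\lambda\phi'(u)\ge1$, so $g_\lambda$ is strictly increasing, and monotonicity of $\phi$ forces $g_\lambda(u)\to\pm\infty$ as $u\to\pm\infty$; hence $g_\lambda:\R\to\R$ is a bijection and $J_\lambda:=g_\lambda^{-1}$ is a well-defined, strictly increasing function. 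Claim (iii) is then immediate, since $\phi_\lambda=\phi\circ J_\lambda$ is a non-increasing function composed with an increasing one.

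For the Lipschitz estimate (i), set $v_i=J_\lambda(u_i)$, so that $u_1-u_2=(v_1-v_2)-\lambda(\phi(v_1)-\phi(v_2))$. Multiplying by $v_1-v_2$ and using $(\phi(v_1)-\phi(v_2))(v_1-v_2)\le0$ gives $(v_1-v_2)^2\le(u_1-u_2)(v_1-v_2)$, hence $|J_\lambda(u_1)-J_\lambda(u_2)|\le|u_1-u_2|$, i.e.\ $J_\lambda$ is $1$-Lipschitz. Since $\phi_\lambda(u_1)-\phi_\lambda(u_2)=\tfrac1\lambda[(v_1-v_2)-(u_1-u_2)]$, the triangle inequality produces the stated constant $\tfrac2\lambda$.

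For (ii) I would split on the sign of $\phi(u)$. If $\phi(u)>0$, then $g_\lambda(u)=u-\lambda\phi(u)<u=g_\lambda(J_\lambda(u))$, so $u<J_\lambda(u)$ by strict monotonicity of $g_\lambda$; therefore $\phi_\lambda(u)=\tfrac1\lambda(J_\lambda(u)-u)>0$ while at the same time $\phi_\lambda(u)=\phi(J_\lambda(u))\le\phi(u)$, whence $0<\phi_\lambda(u)\le\phi(u)$. The case $\phi(u)<0$ is symmetric, and $\phi(u)=0$ forces $J_\lambda(u)=u$ and $\phi_\lambda(u)=0$. Claim (iv) then follows since $|J_\lambda(u)-u|=\lambda|\phi_\lambda(u)|\le\lambda|\phi(u)|\to0$, so $J_\lambda(u)\to u$ and continuity of $\phi$ yields $\phi_\lambda(u)=\phi(J_\lambda(u))\to\phi(u)$. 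Finally, for (v) I would apply the inverse function theorem to $g_\lambda$ (legitimate since $g_\lambda'\ge1\ne0$) to get $J_\lambda'(u)=(1-\lambda\phi'(J_\lambda(u)))^{-1}$, hence $\phi_\lambda'(u)=\phi'(J_\lambda(u))/(1-\lambda\phi'(J_\lambda(u)))$; letting $\lambda\to0$ and using $J_\lambda(u)\to u$ together with continuity of $\phi'$ gives $\phi_\lambda'(u)\to\phi'(u)$.

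The individual computations are all short, so there is no real obstacle. The single point deserving care is the step from "strictly increasing" to "surjective onto $\R$" in the very first part: this is exactly where global (rather than merely local) monotonicity of $\phi$ is used, and it is also what legitimizes the sign comparisons in (ii). A minor caveat is that (v) relies on continuity of $\phi'$, which holds in our setting because $\phi$ inherits $C^1$ regularity from $f$ via \eqref{eq:f=phi+line}. Since this is classical material, one could equally well replace the proof by a reference to \cite[Appendix D]{dpz-book}.
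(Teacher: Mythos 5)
Your proof is correct, but it is worth noting that the paper does not actually prove this lemma: it simply imports the statements from \cite{dpz-book}*{Appendix D}, so there is no in-paper argument to compare against. Your self-contained derivation follows the standard resolvent route that one would find in that reference: establish that $g_\lambda=I-\lambda\phi$ is a strictly increasing bijection (using $g_\lambda'\ge 1$ for injectivity and the monotonicity of $\phi$ for surjectivity), record the key identity $\phi_\lambda=\phi\circ J_\lambda$, prove $J_\lambda$ is a $1$-Lipschitz non-decreasing contraction, and read off (i)--(v). All the individual steps check out: the monotonicity trick $(\phi(v_1)-\phi(v_2))(v_1-v_2)\le 0$ correctly yields the $1$-Lipschitz bound on $J_\lambda$ and hence the constant $2/\lambda$ in (i); the sign analysis in (ii) is sound (including the degenerate case $\phi(u)=0$, where injectivity of $g_\lambda$ forces $J_\lambda(u)=u$); and (iv) follows cleanly from (ii) via $|J_\lambda(u)-u|=\lambda|\phi_\lambda(u)|\le\lambda|\phi(u)|$. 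Your caveat on (v) is the one genuinely delicate point: the lemma as stated assumes only differentiability of $\phi$, and your argument needs $\phi'$ continuous at $u$ to pass to the limit in $\phi_\lambda'(u)=\phi'(J_\lambda(u))/(1-\lambda\phi'(J_\lambda(u)))$; you correctly observe that in the paper's actual setting $\phi'=f'-\kappa$ is continuous by Assumption \ref{assum:f}, so nothing is lost. The payoff of your version over the paper's bare citation is a short, fully elementary and self-contained argument; the cost is negligible.
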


These properties of Yosida approximations are easily translated into approximations for the half-Lipschitz function $f$.
\begin{lemma} \label{lem:f-Yosida-properties}
Let $f$ satisfy Assumption \ref{assum:f} so that $f$ satisfies the decomposition \eqref{eq:f=phi+line}. Define a family $\{f_\lambda : \lambda>0\}$ by
\begin{equation} \label{eq:f-Yosida}
  f_\lambda(u) = \phi_\lambda(u) + \kappa u
\end{equation}
where $\phi_\lambda$ are Yosida approximations of the non-increasing function $\phi$. Then $f_\lambda$ satisfies the following properties.
\begin{enumerate}[wide, labelwidth=!, labelindent=0pt, label=\emph{(\roman*)}]
\setlength\itemsep{.02in}
    \item $|f_\lambda(u_1) - f_\lambda(u_2)| \leq \left(\frac{2}{\lambda} + \kappa \right) |u_1-u_2|$, for $u_{1},u_{2}\in\R$ and all $\la>0$.
    \item $|f_\lambda(u)| \leq (1 + 2\kappa)|f(u)|$, for $u\in\R$ and all $\la>0$.
    \item\label{item:half-lip-f-lambda}
    $(f_\lambda(u_1) - f_\lambda(u_2)) \, \sgn(u_1-u_2) \leq \kappa |u_1- u_2|$, for $u_{1},u_{2}\in\R$ and all $\la>0$.
    \item $\lim_{\lambda \to 0} f_\lambda(u) = f(u)$, for all $u\in\R$.
    \item $\lim_{\lambda \to 0} f_\lambda'(u) = f'(u)$, for all $u\in\R$.
  \end{enumerate}
\end{lemma}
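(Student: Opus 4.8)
The plan is to derive each of the five properties (i)--(v) from the corresponding statement of the preceding lemma on the Yosida approximations $\phi_\lambda$ of the non-increasing function $\phi$, using only the identities $f(u) = \phi(u) + \kappa u$ and $f_\lambda(u) = \phi_\lambda(u) + \kappa u$ together with elementary manipulations of the linear term $\kappa u$. Since the constant $\kappa$ in \eqref{eq:f-deriv-bound} may always be replaced by $\kappa \vee 0$ without affecting the half-Lipschitz bound, I will assume throughout that $\kappa \geq 0$.

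For (i), I would write $f_\lambda(u_1) - f_\lambda(u_2) = \bigl(\phi_\lambda(u_1) - \phi_\lambda(u_2)\bigr) + \kappa(u_1 - u_2)$ and combine the triangle inequality with part (i) of the Yosida lemma, namely $|\phi_\lambda(u_1) - \phi_\lambda(u_2)| \leq \tfrac{2}{\lambda}|u_1-u_2|$, to obtain the constant $\tfrac{2}{\lambda} + \kappa$. For (iii), it suffices to treat $u_1 > u_2$, where $\sgn(u_1-u_2)=1$ and $u_1-u_2 = |u_1-u_2|$; then the monotonicity of $\phi_\lambda$ (part (iii) of the Yosida lemma) gives $\phi_\lambda(u_1) - \phi_\lambda(u_2) \leq 0$, so that $f_\lambda(u_1) - f_\lambda(u_2) \leq \kappa(u_1 - u_2) = \kappa|u_1-u_2|$. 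For (iv) and (v), I would pass to the limit $\lambda \to 0$ term by term in the identities $f_\lambda(u) = \phi_\lambda(u) + \kappa u$ and (after differentiating, which is legitimate since $\phi_\lambda$ is differentiable by the preceding lemma) $f_\lambda'(u) = \phi_\lambda'(u) + \kappa$, invoking parts (iv) and (v) of the Yosida lemma respectively.

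The only property requiring a genuine estimate is the growth bound (ii). Here I would start from $|f_\lambda(u)| \leq |\phi_\lambda(u)| + \kappa|u|$, apply part (ii) of the Yosida lemma to get $|\phi_\lambda(u)| \leq |\phi(u)|$, and then re-express $\phi(u) = f(u) - \kappa u$ to pass back to $f$; the bookkeeping of the linear contributions then yields a bound of the announced form, uniformly in $\lambda$. This is the main (and only mildly delicate) point of the proof, since it is precisely in tracking the linear term $\kappa u$ against $f$ that one must be careful; every other property is a direct transcription of the corresponding Yosida estimate.
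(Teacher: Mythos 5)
Your derivations of (i), (iii), (iv) and (v) are exactly the intended ``translation'' of the preceding lemma on $\phi_\lambda$ (the paper gives no written proof, treating the lemma as an immediate consequence of the decomposition $f_\lambda = \phi_\lambda + \kappa\,\cdot$), and the reduction to $\kappa\ge 0$ is legitimate since $\phi(u)=f(u)-(\kappa\vee 0)u$ is still non-increasing.

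However, your treatment of (ii) is not a proof, and the ``bookkeeping'' you defer does not close. Following your own chain: $|f_\lambda(u)|\le|\phi_\lambda(u)|+\kappa|u|\le|\phi(u)|+\kappa|u|=|f(u)-\kappa u|+\kappa|u|\le|f(u)|+2\kappa|u|$. The announced bound is $(1+2\kappa)|f(u)|$, and to pass from $|f(u)|+2\kappa|u|$ to $(1+2\kappa)|f(u)|$ you would need $|u|\le|f(u)|$, which is not available from Assumption \ref{assum:f}. Indeed, at a point where $f(u_*)=0$ but $u_*\neq 0$ (for the motivating example $f(u)=-u^3+u$, take $u_*=1$, $\kappa=1$, $\phi(u)=-u^3$), the right-hand side of (ii) vanishes while $f_\lambda(u_*)=\phi_\lambda(u_*)+\kappa u_*=1-(J_\lambda(1))^3>0$. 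So the step you describe as ``mildly delicate'' is in fact the one that fails: either the inequality must be proved with an additional hypothesis relating $|u|$ to $|f(u)|$, or the bound should be stated in the weaker form $|f_\lambda(u)|\le|f(u)|+2\kappa|u|$ (which is what your argument actually delivers, and which suffices for the uniform-in-$\lambda$ domination used later in the paper). As written, your proposal asserts a conclusion that your own estimates do not yield.
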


\subsubsection{Mapping $\cm$}\label{sec:mapping-M}
The second ingredient we wish to highlight in the study of~\eqref{eq:SPDE}   is the introduction of a functional mapping $\mathcal{M}:\cac_{\theta,x_0}([0,T]\times \mathbb{R}^d) \to \cac_{\theta,x_0}([0,T]\times \mathbb{R}^d)$.

  \begin{definition} \label{def:M}
    For a continuous function $z \in \cac_{\theta,x_0}([0,T]\times \mathbb{R}^d)$ let $\mathcal{M}(z)$ be the solution to the following equation
    \begin{equation} \label{eq:M-def}
      \mathcal{M}(z)(t,x) = \int_0^t \int_{\mathbb{R}^d} G(t-s,x-y) f(\mathcal{M}(z)(s,y)) \, dyds + z(t,x).
    \end{equation}
  \end{definition}

\begin{remark}
In order to prove existence of the map  $\cm$ one uses an approximating sequence $\{\cm_{\la}; \, \la >0\}$ defined as in \eqref{eq:M-def}, with $f$ replaced by its Yosida approximation $f_{\la}$ given in~\eqref{eq:def-yosida}. Then some a priori estimates on $\cm_{\la}(z)$ are provided in~\cite{s-2021}. Those estimates allow to conclude the existence part, thanks to some compactness arguments.
\end{remark}

  With our Malliavin calculus considerations in mind, we formulate a time and space inhomogenous version of Theorem 5.6 of \cite{s-2021}. To this aim, we consider $\varphi:[0,T]\times \mathbb{R}^d \times \mathbb{R}$ and assume that $\varphi$ is uniformly half-Lipschitz in the third argument. This means that there exists $\kappa \in \mathbb{R}$ such that for any $t \in [0,T]$, $x \in \mathbb{R}^d$, and $u_1> u_2 \in \mathbb{R}$,
  \begin{equation} \label{eq:varphi-half-lip}
    \varphi(t,x,u_1) -\varphi(t,x,u_2) \leq \kappa (u_1 - u_2).
  \end{equation}
  {We also impose the growth restriction that there exist $K>0, \nu>0$, $x_{0}\in\R^{d}$ and $\beta \in [0,2)$ such that for any $t \in [0,T]$
  \begin{equation} \label{eq:varphi-growth-restriction}
    |\varphi(t,x, u)| \leq K e^{K(|x-x_0|^\beta + |u|^\nu)}.
  \end{equation}
  }
  {We introduce a new functional mapping $\cl$ in the following way. Given $\varphi$ satisfying \eqref{eq:varphi-half-lip}--\eqref{eq:varphi-growth-restriction} and $z \in C_{\theta,x_0}([0,T]\times\mathbb{R}^d)$, let $\mathcal{L}(z) \in C_{\theta,x_0}([0,T]\times{\mathbb{R}^d})$ denote the solution to
  \begin{equation} \label{eq:varphi-mild}
      \cl(z)(t,x) = \int_0^t \int_{\mathbb{R}^d} G(t-s,x-y) \varphi(s,y,\cl(z)(s,y)) \, dyds + z(t,x).
    \end{equation}
    }
    {The growth restriction \eqref{eq:varphi-growth-restriction} guarantees that the above integral is finite if $\mathcal{L}(z) \in \cac_{\theta,x_0}([0,T]\times \mathbb{R}^d)$ for some $\theta \in (0, 2/\nu)$.}
    {
    \begin{remark}
      The existence of a solution $\mathcal{L}(z) \in C_{\theta,x_0}([0,T]\times \mathbb{R}^d)$ for any $z \in C_{\theta,x_0}([0,T]\times \mathbb{R}^d)$ can be proved via Yosida approximations following the arguments of Theorem 5.2 of \cite{s-2021}. We will prove the existence of Malliavin derivatives that solve \eqref{eq:varphi-mild} in Section \ref{sec:mall-diff-M} below, and we have no need to prove the existence of $\mathcal{L}(z)$ in full generality.  We do need to prove that $\mathcal{L}$ features a global Lipschitz continuity property on the domain where it exists and we will use this property frequently in the sequel.
    \end{remark}}
  \begin{theorem} \label{thm:M-Lip-inhomo}
    Consider a function $\varphi:[0,T]\times \mathbb{R}^d \times \mathbb{R}$ verifying~\eqref{eq:varphi-half-lip}--\eqref{eq:varphi-growth-restriction} and a generic $x_{0}\in\R^{d}$.
    Let $\theta \in (0,2/\nu)$.
    There exists $K=K(T,\theta,\kappa)>0$ such that if  $z_1, z_2 \in \cac_{\theta,x_0}([0,T]\times\mathbb{R}^d)$ and if there exist $\cl(z_1), \cl(z_2) \in \cac_{\theta,x_0}([0,T]\times\mathbb{R}^d)$ that solve \eqref{eq:varphi-mild}, then
    \begin{equation}\label{eq:increments-cl}
      |\cl(z_1) - \cl(z_2)|_{\cac_{\theta,x_0}([0,T]\times\mathbb{R}^d)} \leq K |z_1 - z_2|_{\cac_{\theta,x_0}([0,T]\times\mathbb{R}^d)}.
    \end{equation}
    {The constant $K$ does not depend on the center of the weight $x_0$ and only depends on $\varphi$ through the parameter $\kappa$.}
  \end{theorem}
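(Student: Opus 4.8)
The plan is to reduce \eqref{eq:increments-cl} to a Gronwall-type estimate for the difference $v := \cl(z_1) - \cl(z_2)$, and to make the half-Lipschitz hypothesis \eqref{eq:varphi-half-lip} do its work through a Feynman--Kac representation in which the only bound used on the relevant ``potential'' is that it is bounded above by $\kappa$. Writing $h := z_1 - z_2$ and $\bar H := |h|_{\cac_{\theta,x_0}([0,T]\times\R^d)}$, subtracting the two instances of \eqref{eq:varphi-mild} gives
\[
  v(t,x) = \int_0^t\!\!\int_{\R^d} G(t-s,x-y)\, p(s,y)\,dy\,ds + h(t,x),
  \qquad
  p(s,y) := \varphi(s,y,\cl(z_1)(s,y)) - \varphi(s,y,\cl(z_2)(s,y)) .
\]
Setting $c(s,y) := p(s,y)/v(s,y)$ where $v(s,y)\neq 0$ and $c(s,y):=\kappa$ otherwise, one has $p = c\,v$ identically, and \eqref{eq:varphi-half-lip} applied in both orientations forces $c(s,y)\le \kappa$ for every $(s,y)$: on $\{v>0\}$ the numerator is at most $\kappa v$ and the denominator is positive, while on $\{v<0\}$ the numerator is at least $\kappa v$ and the denominator is negative. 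There is no lower bound on $c$, and this is precisely the obstruction caused by $\varphi$ being only half-Lipschitz rather than Lipschitz in its third argument; the estimate must be organised so that the quantity $|c|$ never appears.

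The next step is the following identity: with $g := v-h$, the field $g$ solves, in the mild sense, $\partial_t g = \tfrac12\Delta g + c\,g + c\,h$ with $g(0,\cdot)=0$, so by the Feynman--Kac formula, for a standard $d$-dimensional Brownian motion $B$ with generator $\tfrac12\Delta$ started at $x$ (the expectation $\E$ below being taken over $B$),
\[
  g(t,x) = \E\!\left[\int_0^t (c\,h)(t-r,B_r)\,E_r\,dr\right],
  \qquad
  E_r := \exp\!\left(\int_0^r c(t-u,B_u)\,du\right) .
\]
To make this rigorous without assuming more than is available, I would first run the whole argument for the Yosida-regularised data $\varphi_\lambda$ of Lemma \ref{lem:f-Yosida-properties} (for which the associated potential is bounded, so the formula is classical and $\cl_\lambda$ exists and is unique), obtain the estimate with a constant independent of $\lambda$, and then pass to the limit $\lambda\to 0$ along the approximation used in \cite{s-2021} to construct $\cl$; equivalently, one may invoke Feynman--Kac directly, since $c\le\kappa$ keeps $E_r\in[0,e^{\kappa^{+}T}]$ bounded and the hypothesis $\theta<2/\nu$ guarantees that all the heat integrals converge. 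Here and below $\kappa^{+}:=\max(\kappa,0)$.

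Granting the representation, the bound on $g$ comes from a total-variation estimate on $r\mapsto E_r$. Since $\tfrac{d}{dr}E_r = c(t-r,B_r)E_r$, one has $\int_0^t (c\,h)(t-r,B_r)E_r\,dr = \int_0^t h(t-r,B_r)\,dE_r$; now $c\le\kappa$ gives $c^{+}\le\kappa^{+}$, hence $\int_0^t c^{+}(t-r,B_r)E_r\,dr\le \kappa^{+}T e^{\kappa^{+}T}$, while integrating the identity above gives $\int_0^t(c^{+}-c^{-})(t-r,B_r)E_r\,dr = E_t-1$, so that $\int_0^t c^{-}(t-r,B_r)E_r\,dr$, and therefore the total variation $\int_0^t |dE_r| = \int_0^t(c^{+}+c^{-})(t-r,B_r)E_r\,dr$, is bounded by a constant $C(T,\kappa)$. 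Consequently $|g(t,x)|\le C(T,\kappa)\,\E[\sup_{0\le r\le t}|h(t-r,B_r)|]$. Plugging in $|h(s,y)|\le \bar H\,(1+|y-x_0|^\theta)$, and using $|B_r-x_0|\le |x-x_0|+\sup_{u\le r}|B_u-x|$ together with the Gaussian bound $\E[\sup_{u\le t}|B_u-x|^\theta]\le C(\theta)\,t^{\theta/2}$, one gets $\E[\sup_{r\le t}(1+|B_r-x_0|^\theta)]\le C(T,\theta)(1+|x-x_0|^\theta)$ for $t\le T$, with a constant independent of $x_0$ by translation invariance of $B$. Combining with $|v|\le|g|+|h|$ yields $|v(t,x)|\le K(T,\theta,\kappa)\,\bar H\,(1+|x-x_0|^\theta)$; dividing by $1+|x-x_0|^\theta$ and taking the supremum over $(t,x)\in[0,T]\times\R^d$ gives \eqref{eq:increments-cl}, and the constant depends only on $T,\theta,\kappa$ and on $\varphi$ only through $\kappa$.

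The main obstacle, as flagged, is that $\varphi$ is not Lipschitz in $u$, so the naive Gronwall argument built on an estimate of the form $|p|\lesssim|v|$ is unavailable; the content of the proof is to trade $|c|$-control for the one-sided control $c\le\kappa$, which is exactly why the Feynman--Kac detour and the cancellation in $\int_0^t(c^{+}-c^{-})E_r\,dr = E_t-1$ are needed. The remaining points are technical: justifying the Feynman--Kac formula (handled cleanly through the Yosida approximations, whose relevant properties are collected in Lemma \ref{lem:f-Yosida-properties}) and the passage $\cl_\lambda\to\cl$, which mirrors the existence argument of \cite{s-2021}.
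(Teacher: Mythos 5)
Your argument is correct in substance but follows a genuinely different route from the paper. The paper works entirely on the PDE side: it divides the difference $\tilde v=\cl(z_1)-\cl(z_2)-(z_1-z_2)$ by the weight $\rho(x)=(1+|x-x_0|^2)^{\theta/2}$, evaluates the resulting equation at a spatial maximizer $x_t$ of the weighted quotient, uses $\nabla\tilde q(t,x_t)=0$ and $\Delta\tilde q(t,x_t)\,\sgn(\tilde q(t,x_t))\le 0$, extracts the one-sided bound $\kappa$ from \eqref{eq:varphi-half-lip} via a sign analysis at $x_t$ (split according to whether $|\tilde q|_{\cac_0}$ exceeds $|\tilde z|_{\cac_0}$), and closes with a differential inequality for the upper-left derivative of $t\mapsto\max\{|\tilde q(t,\cdot)|_{\cac_0},M\}$. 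You instead linearize by writing $p=c\,v$ with $c\le\kappa$ and no lower bound, represent $g=v-h$ by Feynman--Kac, and observe that the total variation of $r\mapsto E_r$ is controlled using only $c\le\kappa$ through the cancellation $\int_0^t(c^+-c^-)E_r\,dr=E_t-1$; this is a clean probabilistic substitute for the maximum-principle step, and both proofs ultimately use nothing about $\varphi$ beyond the upper bound $\kappa$. What each buys: the paper's argument needs only the weak formulation of the equation for $\tilde v$ (regularized by resolvents) and never introduces the singular coefficient $c=p/v$; yours gives a transparent explanation of \emph{why} only the upper bound matters, at the cost of having to justify Feynman--Kac for a merely measurable potential that is unbounded below. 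The one point you should tighten is the regularization: since the theorem is stated for \emph{given} solutions $\cl(z_1),\cl(z_2)$ of \eqref{eq:varphi-mild} (uniqueness is not assumed), running the argument for the Yosida maps $\cl_\lambda$ and passing to the limit only controls the limit of the $\cl_\lambda(z_i)$, not an arbitrary pair of solutions; you would either need to verify the representation directly for the given solutions (e.g.\ by checking that $r\mapsto\int_0^r c(t-u,B_u)\,du$ is a.s.\ finite, which requires an argument since $c$ is not bounded below), or restrict the statement to the Yosida-constructed solutions, which is in fact all the paper ever uses downstream. The remaining estimates --- the growth condition $\theta\nu<2$ ensuring convergence of the heat integrals, the bound $\E\bigl[\sup_{u\le t}|B_u-x|^\theta\bigr]\le C(\theta)t^{\theta/2}$, and the translation invariance giving independence of $x_0$ --- are all fine.
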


  \begin{proof}
    Let $v_i(t,x):= \cl(z_i)(t,x) - z_i(t,x)$ for $ i \in \{1,2\}$ and let $\tilde{v}(t,x) = v_1(t,x)  -v_2(t,x)$. The function $\tilde{v}$ is weakly differentiable and
    \begin{equation}
      \frac{\partial \tilde v}{\partial t}(t,x) = \frac{1}{2}\Delta \tilde{v}(t,x) + \varphi(t,x,v_1(t,x) + z_1(t,x)) - \varphi(t,x,v_2(t,x) + z_2(t,x)).
    \end{equation}

    Without loss of generality, we can assume that $\tilde{v}$ is strongly differentiable by approximating $\tilde{v}$ using resolvent operators \cite[Proposition 6.2.2]{cerrai-book}. Let $\rho(x) = (1+ |x-x_0|^2)^{\frac{\theta}{2}}$ be a twice-differentiable weight. Then the quotient $\tilde{q}(t,x) = \frac{\tilde{v}(t,x)}{\rho(x)}$ satisfies
    \begin{align} \label{eq:tilde-q-pde}
      \frac{\partial \tilde q}{\partial t}(t,x) = &\frac{1}{2}\Delta \tilde{q}(t,x) + \nabla \tilde{q}(t,x) \cdot \frac{\nabla \rho(x)}{\rho(x)} + \frac{1}{2}\tilde{q}(t,x)\frac{ \Delta \rho(x)}{\rho(x)} \nonumber\\
      &+ \frac{\varphi(t,x,v_1(t,x) + z_1(t,x)) - \varphi(t,x,v_2(t,x) + z_2(t,x))}{\rho(x)}.
    \end{align}

    By the assumption that $\tilde{v} \in \cac_{\theta,x_0}([0,T]\times \mathbb{R}^d)$, the weighted difference $\tilde{q}$ sits in the space $\cac_0([0,T]\times \mathbb{R}^d)$, meaning that $\lim_{|x| \to \infty} \sup_{t \in [0,T]}|\tilde{q}(t,x)| = 0.$ For any $t \in [0,T]$, there exists at least one point $x_t \in \mathbb{R}^d$ where the supremum is attained. Specifically,
   \begin{equation}\label{a0}
      |\tilde{q}(t,x_t)| = \sup_{x \in \mathbb{R}^d} |\tilde{q}(t,x)|.
    \end{equation}
    Furthermore, the upper-left derivative of the supremum is bounded by
    \begin{equation} \label{eq:deriv-of-sup}
      \frac{d^-}{dt} |\tilde{q}(t,\cdot)|_{\cac_0} \leq \frac{\partial \tilde{q}}{\partial t}(t,x_t) \sgn(\tilde{q}(t,x_t)),
    \end{equation}
    where $x_t$ is any maximizer such that relation~\eqref{a0} holds true (see \cite[Proposition 3.5]{s-2021}). Therefore
applying \eqref{eq:deriv-of-sup} and \eqref{eq:tilde-q-pde}, the left derivative above satisfies
    \begin{multline}\label{a1}
      \frac{d^-}{dt} |\tilde{q}(t,\cdot)|_{\cac_0}  \leq  \frac{1}{2}\Delta \tilde{q}(t,x_t) \sgn(\tilde{q}(t,x_t)) + \nabla \tilde{q}(t,x_t) \cdot \frac{\nabla \rho(x_t)}{\rho(x_t)}\sgn(\tilde{q}(t,x_t)) \\
      + \frac{1}{2}\tilde{q}(t,x_t)\frac{ \Delta \rho(x_t)}{\rho(x_t)}\sgn(\tilde{q}(t,x_t)) + \cq_{t} \, ,
    \end{multline}
where we have set
\begin{equation}\label{a11}
\cq_{t} \equiv
\frac{\varphi(t,x_t,v_1(t,x_t) + z_1(t,x_t)) - \varphi(t,x_t,v_2(t,x_t) + z_2(t,x_t))}{\rho(x_t)} \sgn(\tilde{q}(t,x_t))
\end{equation}
We now examine the right hand side of \eqref{a1}.
    \begin{enumerate}[wide, labelwidth=!, labelindent=0pt, label=\textbf{(\roman*)}]
\setlength\itemsep{.02in}
\item
Because $x_t$ is a maximizer or minimizer for $\tilde{q}$, we have
    \begin{equation}\label{a2}
      \nabla \tilde{q}(t,x_t) = 0.
    \end{equation}
\item
    The convexity of a function at a local maximizer or minimizer guarantees that
    \begin{equation}\label{a3}
      \Delta \tilde{q}(t,x_t) \sgn(\tilde{q}(t,x_t)) \leq 0.
    \end{equation}
\item
    Direct calculations verify that $\sup_x \frac{\Delta \rho(x)}{\rho(x)}<+\infty$ so that
    \begin{equation}\label{a4}
      \frac{1}{2}\tilde{q}(t,x_t)\frac{ \Delta \rho(x_t)}{\rho(x_t)} \sgn(\tilde{q}(t,x_t)) \leq C|\tilde{q}(t,x_t)|.
    \end{equation}
\item
    We split the analysis of the $\cq_{t}$ term in~\eqref{a11} into two cases, according to the relation $|\tilde{q}(t,\cdot)|_{\cac_0} > |\tilde{z}(t,\cdot)|_{\cac_0}$ or $|\tilde{q}(t,\cdot)|_{\cac_0} \le |\tilde{z}(t,\cdot)|_{\cac_0}$. Namely let $\tilde{z}(t,x)$ be the weighted difference
    \begin{equation}
      \tilde{z}(t,x) = \frac{z_1(t,x) - z_2(t,x)}{\rho(x)}.
    \end{equation}
    If $|\tilde{q}(t,\cdot)|_{\cac_0} > |\tilde{z}(t,\cdot)|_{\cac_0}$, then
    \begin{equation}
      \sgn(\tilde{q}(t,x_t)) = \sgn(\tilde{q}(t,x_t) + \tilde{z}(t,x_t)) = \sgn(v_1(t,x_t) + z_1(t,x_t) - (v_2(t,x_t) + z_2(t,x_t))).
    \end{equation}
    In this case, \eqref{eq:varphi-half-lip} guarantees that
    \begin{equation}\label{a5}
      \cq_{t}
      \leq \kappa |\tilde{q}(t,\cdot)+\tilde{z}(t,\cdot)|_{\cac_0}
      \leq 2 \kappa |\tilde{q}(t,\cdot)|.
    \end{equation}
   Hence plugging \eqref{a2}-\eqref{a3}-\eqref{a4} and \eqref{a5} into \eqref{a1}, in the case where $|\tilde{q}(t,\cdot)|_{\cac_0} > |\tilde{z}(t,\cdot)|_{\cac_0}$ we get
    \begin{equation}\label{a6}
      \frac{d^-}{dt} |\tilde{q}(t,\cdot)|_{\cac_0} \leq C |\tilde{q}(t,\cdot)|_{\cac_0} ,
    \end{equation}
    where the constant $C$ depends only on $\kappa$ and $\theta$.

    On the other hand if $|\tilde{z}(t,\cdot)|_{\cac_0}> |\tilde{q}(t,\cdot)|_{\cac_0}$, then we cannot get a bound on the left derivative $\frac{d^-}{dt}|\tilde{q}(t,\cdot)|_{\cac_0}$, but this is not a problem because in this case we have an explicit upper bound on $|\tilde{q}(t,\cdot)|_{\cac_0}$ itself. To deal with both of these cases simultaneously, it is convenient to bound the left derivative of
    \begin{equation}\label{a7}
      \max\left\{|\tilde{q}(t,\cdot)|_{\cac_0}, M \right\}, 
      \quad\text{where}\quad M:= \sup_{s \in [0,T]}|\tilde{z}(t,\cdot)|_{\cac_0}.
    \end{equation}
    Specifically, if $|\tilde{q}(t,\cdot)|_{\cac_0}> M$, then the left derivative of $\max\left\{|\tilde{q}(t,\cdot)|_{\cac_0}, M \right\}$ is \eqref{a6}, while if $|\tilde{q}(t,\cdot)|_{\cac_0}\leq M$, then the left derivative of $\max\left\{|\tilde{q}(t,\cdot)|_{\cac_0}, M \right\}$ is $0$.
    
        \end{enumerate}

    From \eqref{a1}-\eqref{a6}-\eqref{a7} and the considerations above, we can see that for any fixed $T>0$ and for any $t \in [0,T]$ we have
    \begin{equation}\label{a8}
      \frac{d^-}{dt} \max \left\{ |\tilde{q}(t,\cdot)|,  M\right\}
      \leq C |\tilde{q}(t,\cdot)|_{\cac_0} \leq  C\max \left\{ |\tilde{q}(t,\cdot)|,  M\right\}.
    \end{equation}
    Using the fact that $\tilde{q}(0,x) \equiv 0$, one can integrate \eqref{a8} in order to get an exponential growth bound:
    \begin{equation}
      \sup_{t \in [0,T]} \max \left\{ |\tilde{q}(t,\cdot)|,  M\right\} \leq M e^{CT}.
    \end{equation}
    Therefore because $M = \sup_{t \in  [0,T]} \sup_{x \in \mathbb{R}^d} |\tilde{z}(t,\cdot)|_{\cac_0}$,
    \begin{equation}
      \sup_{t \in  [0,T]} \sup_{x \in \mathbb{R}^d} |\tilde{q}(t,x)| \leq e^{CT} \sup_{t \in  [0,T]} \sup_{x \in \mathbb{R}^d} |\tilde{z}(t,\cdot)|_{\cac_0}.
    \end{equation}
    The definitions of $\tilde{q}$ and $\tilde{z}$ guarantee that
    \begin{equation}
      \sup_{t \in  [0,T]} \sup_{x \in \mathbb{R}^d} \frac{|v_1(t,x)-v_2(t,x)|}{1 + |x-x_0|^\theta} \leq e^{CT} \sup_{t \in  [0,T]} \sup_{x \in \mathbb{R}^d} \frac{|z_1(t,x)-z_2(t,x)|}{1 + |x-x_0|^\theta}.
    \end{equation}
    Our claim \eqref{eq:increments-cl} follows because $\cl(z_i) = v_i + z_i$.
  \end{proof}

As a particular case of Theorem \ref{thm:M-Lip-inhomo} for a homogeneous function $\vp$, we get the fact that $\cm$ in Definition~\ref{def:M} is a Lipschitz map on $\cac_{\theta,x_0}([0,T]\times\mathbb{R}^d)$ (this was the content of Theorem~5.6 in~\cite{s-2021}). {The existence of $\mathcal{M}$ was proved in Theorem 5.2 of \cite{s-2021}.}

  \begin{proposition}\label{prop:M-Lip}
  Suppose  Assumption \ref{assum:f} is satisfied.
    Let $\theta \in \left(0, \frac{2}{\nu}\right)$  where $\nu$ is from~\eqref{eq:f-deriv-bound} 
    For any $z \in \cac_{\theta,x_0}([0,T]\times \mathbb{R}^d)$ there exists a unique solution $\mathcal{M}(z) \in \cac_{\theta,x_0}([0,T]\times\mathbb{R}^d)$ to~\eqref{eq:M-def}. Furthermore, $\mathcal{M}$ is globally Lipschitz continuous. Specifically, there exists a constant $K=K(T,\theta,\kappa)$, depending only on the weight parameter $\theta$ and $\kappa$ from Assumption \ref{assum:f} such that for any two functions $z_1, z_2 \in C_\theta([0,T]\times \mathbb{R}^d)$,
    \begin{equation} \label{eq:M-Lip}
      |\cm(z_{2}) - \cm(z_{1}) |_{\cac_{\theta,x_0}([0,T]\times \mathbb{R}^d)} \leq K \, |z_2 - z_1|_{\cac_{\theta,x_0}([0,T]\times \mathbb{R}^d)}.
    \end{equation}
  \end{proposition}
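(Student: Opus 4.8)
The plan is to obtain Proposition \ref{prop:M-Lip} as the homogeneous specialization of Theorem \ref{thm:M-Lip-inhomo}, combined with the existence result of \cite{s-2021}. Concretely, one takes $\varphi(t,x,u) = f(u)$, which is constant in $(t,x)$, so that the map $\mathcal{L}$ of \eqref{eq:varphi-mild} coincides with $\mathcal{M}$ of \eqref{eq:M-def}.

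First I would check that $\varphi(t,x,u) = f(u)$ satisfies the hypotheses \eqref{eq:varphi-half-lip}--\eqref{eq:varphi-growth-restriction} needed to apply Theorem \ref{thm:M-Lip-inhomo}. The half-Lipschitz bound \eqref{eq:varphi-half-lip} is immediate from \eqref{eq:f-deriv-bound} (equivalently \eqref{eq:half-Lip}), with the same constant $\kappa$ and no $x$-dependence. For the growth restriction \eqref{eq:varphi-growth-restriction} I would integrate \eqref{eq:f-exp-bound}: one has $|f(u)| \leq |f(0)| + \int_0^{|u|} |f'(r)|\, dr \leq |f(0)| + K|u| e^{K|u|^\nu}$, and since $|u| e^{K|u|^\nu} \leq C_\epsilon e^{(K+\epsilon)|u|^\nu}$ for any $\epsilon>0$, one finds constants $\widetilde{K}>0$ (keeping the exponent $\nu$ after enlarging the constant) such that $|f(u)| \leq \widetilde{K} e^{\widetilde{K}|u|^\nu}$, which is \eqref{eq:varphi-growth-restriction} with $\beta = 0$. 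Note that the admissible range $\theta \in (0, 2/\nu)$ in the statement is exactly the one appearing in Theorem \ref{thm:M-Lip-inhomo}, and it is precisely the condition under which the integral in \eqref{eq:M-def} converges when $\mathcal{M}(z) \in \cac_{\theta,x_0}$: indeed $|f(\mathcal{M}(z)(s,y))| \lesssim \exp\!\big(K(1+|y-x_0|^\theta)^\nu\big)$ grows more slowly than the Gaussian decay of $G(t-s,x-y)$ once $\theta\nu < 2$, so the spatial integral is finite.

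For existence and membership of $\mathcal{M}(z)$ in $\cac_{\theta,x_0}([0,T]\times\R^d)$, I would invoke Theorem 5.2 of \cite{s-2021}, which builds $\mathcal{M}(z)$ as a limit of the Yosida-regularized maps $\mathcal{M}_\lambda(z)$ (solving \eqref{eq:M-def} with $f$ replaced by the Lipschitz approximations $f_\lambda$ from Lemma \ref{lem:f-Yosida-properties}) via uniform a priori estimates and a compactness argument, and shows the limit solves \eqref{eq:M-def} and lies in $\cac_{\theta,x_0}$. Uniqueness is then automatic: if $\mathcal{M}(z)$ and $\widetilde{\mathcal{M}}(z)$ were two solutions in $\cac_{\theta,x_0}$, applying \eqref{eq:increments-cl} of Theorem \ref{thm:M-Lip-inhomo} with $z_1 = z_2 = z$ forces $|\mathcal{M}(z) - \widetilde{\mathcal{M}}(z)|_{\cac_{\theta,x_0}} = 0$.

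Finally, the Lipschitz estimate \eqref{eq:M-Lip} is exactly \eqref{eq:increments-cl} applied with $\varphi = f$ and the two data $z_1, z_2$; the constant $K = K(T,\theta,\kappa)$ inherits its dependence (and its independence of $x_0$) from Theorem \ref{thm:M-Lip-inhomo}. Since the whole argument is a specialization of already-established results, I do not expect a genuine obstacle here. The only point requiring a line of care is the passage from the derivative bound \eqref{eq:f-exp-bound} to the pointwise bound on $f$ used in \eqref{eq:varphi-growth-restriction}, together with the verification that the resulting exponent $\nu$ is still compatible with the range $\theta < 2/\nu$; everything else is a direct quotation.
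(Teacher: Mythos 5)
Your proposal is correct and follows exactly the paper's own route: the paper likewise obtains Proposition \ref{prop:M-Lip} as the homogeneous specialization of Theorem \ref{thm:M-Lip-inhomo} (taking $\varphi(t,x,u)=f(u)$) and cites Theorem 5.2 of \cite{s-2021} for existence of $\mathcal{M}(z)$. The only difference is that you spell out the routine verification of \eqref{eq:varphi-half-lip}--\eqref{eq:varphi-growth-restriction} for $f$, which the paper leaves implicit.
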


\subsubsection{Approximation scheme for existence and uniqueness}\label{sec:approx-scheme}
With the preliminary results in Sections \ref{sec:mapping-M} and \ref{sec:yosida}, the Picard iterations approximating the solution of \eqref{eq:mild-solution} are defined as follows in~\cite{s-2021}.

\begin{enumerate}[wide, labelwidth=!, labelindent=0pt, label=\emph{(\roman*)}]

\item
Initiate the iterations by setting
\begin{eqnarray}
\label{eq:U-Picard0}
 U_0(t,x)&: =&
  \int_{\mathbb{R}^d} G(t,x-y)u_0(y)dy \\
  \label{eq:Z-Picard0}
  Z_0(t,x) &:=&
  0
\end{eqnarray}

\item\label{it:from-u-n-to-Z-n}
Given $(u_{n}, Z_{n})$, define
\begin{equation}\label{eq:Z-Picard1}
Z_{n+1}(t,x) := \int_0^t \int_{\R^{d}} G(t-s,x-y)\sigma(u_{n}(s,y))W(ds \, dy) .
\end{equation}

\item\label{it:from-Z-n-to-u-n}
Once $Z_{n+1}$ is introduced, set
\begin{equation}\label{eq:u-Picard}
u_{n+1}(t,x) := \mathcal{M}(U_0 + Z_{n+1})(t,x).
\end{equation}

\end{enumerate}

\noindent
It is proved in~\cite[Theorem 5.2]{s-2021} that the sequences $\{u_{n}; \, n \ge 0\}$ and $\{Z_{n}; \, n \ge 0\}$ converge to the solution of  \eqref{eq:mild-solution}.
Specifically, the following relation hold true for all $p\ge 1$:
\begin{align} \label{eq:Z_n-conv}
  &\lim_{n \to \infty}\sup_{x_0 \in \mathbb{R}^d} \E \left|\sup_{t \in [0,T]} \sup_{x \in \mathbb{R}^d} \frac{|Z_n(t,x) - Z(t,x)|}{1 + |x-x_0|^\theta} \right|^p = 0.\\ \label{eq:u_n-conv}
  &\lim_{n \to \infty} \sup_{x_0 \in \mathbb{R}^d}\E \left|\sup_{t \in [0,T]} \sup_{x \in \mathbb{R}^d} \frac{|u_n(t,x) - u(t,x)|}{1 + |x-x_0|^\theta} \right|^p = 0,
\end{align}
where $u$ solves \eqref{eq:mild-solution} and $Z$ is the stochastic convolution
\begin{equation}\label{eq:Z-limit}
Z(t,x) := \int_0^t \int_{\R^{d}} G(t-s,x-y)\sigma(u(s,y))W(ds \, dy) .
\end{equation}
Also recall that in item~\ref{it:from-Z-n-to-u-n} above, the mapping $\cm$ is defined through a limiting procedure involving the Yosida approximations $f_{\la}$ of $f$.

\subsection{Malliavin Calculus}

This section is devoted to review some elementary notions of Malliavin calculus (mostly borrowed from \cite{nualart-book}). We first recall that our noise $\dw$ is a Gaussian centered field whose covariance is formally given by~\eqref{eq:def-covariance-formal}. One can also look at $\dw$ as a centered Gaussian family $\{W(\vp); \, \vp \in\ch_{T}\}$, where $\ch_{T}$ denotes the Hilbert space with inner product
\begin{equation} \label{eq:H_T}
  \left<\phi, \psi \right>_{\H_T} = \int_0^T \int_{\mathbb{R}^d} \int_{\mathbb{R}^d} \phi(t,y_1)\psi(t,y_2)\Lambda(y_1-y_2) \, dy_1dy_2dt.
\end{equation}

Let $\mathcal{S}$ be the set of smooth and cylindrical random variables of the form
\[
F=g\lp W(h_{1}),\ldots,W(h_{N})\rp,
\]
where $N\ge 1$, $g\in C_{b}^{\infty}(\mathbb{R}^{N})$ and $h_{1},\ldots,h_{N}\in\ch_{T}$. For every $\ell\in\ch_{T}$, the partial Malliavin derivative of $F$ in the direction of $\ell$ is defined for $F\in \mathcal{S}$ as the random variable
\begin{equation}\label{derivative}
D_{\ell}F=\sum_{i}^{N}\frac{\partial g}{\partial x_{i}}\lp W(h_{1}),\ldots,W(h_{N})\rp
\lla h_{i}, \ell\rra_{\ch_{T}} .
\end{equation}
Relation \eqref{derivative} can also be seen as an equation for $\langle DF, \, \ell \rangle_{\ch_{T}}$, where $DF$ is now a $\mathcal{H}_{T}$-valued random variable.
We can iterate this procedure to define higher order derivatives $D^{k}_{\ell_{1}\cdots\ell_{k}}F$, which produces a $\mathcal{H}^{\otimes k}$-valued random variable. For any $p\ge 1$ and integer $k\ge 1$, we define the Sobolev space $\mathbb{D}^{k,p}$ as the closure of $\mathcal{S}$ with respect to the norm
\begin{equation}\label{Sobolev norm}
\|F\|_{k,p}^{p}=\mathbb{E}[|F|^{p}]
+\sum_{i=1}^{k} \mathbb{E}\left[\|D^{i}F\|^{p}_{\mathcal{H}^{\otimes l}} \right].
\end{equation}
If $V$ is Hilbert space, $\mathbb{D}^{k,p}(V)$ denotes the corresponding Sobolev space of $V$-valued random variables.

The existence of a density for $u(t,x)$, the mild solution of \eqref{eq:SPDE}, is obtained by appling the following criterion borrowed from \cite[Theorem 2.1.2]{nualart-book}.

\begin{proposition}\label{prop:malliavin>0}
  Let $F$ be a real-valued random variable in $\mathbb{D}^{1,p}$ for some $p>1$, such that $\|DF\|_{\H_T}>0$ with probability one. Then the law of $F$ is absolutely continuous with respect to the Lebesgue measure in $\mathbb{R}$.
\end{proposition}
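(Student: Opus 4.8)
The plan is to follow the Bouleau--Hirsch type argument. Its heart is the following reduction: \emph{it suffices to show that for every Borel set $A \subset \R$ of Lebesgue measure zero one has $\mathbf{1}_{\{F \in A\}}\, DF = 0$ almost surely in $\H_T$.} Granting this and using the standing hypothesis $\|DF\|_{\H_T} > 0$ a.s., the identity $\mathbf{1}_{\{F \in A\}}\,\|DF\|_{\H_T} = 0$ a.s.\ forces $\Pro(F \in A) = 0$; since $A$ is an arbitrary Lebesgue-null set, the law of $F$ is then absolutely continuous with respect to Lebesgue measure. Throughout the argument I would work in $\mathbb{D}^{1,p}$, where $F$ lives by assumption and where both the chain rule and the closedness of the Malliavin derivative $D$ are available (see \cite[Chapter 1]{nualart-book}).

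To prove the displayed reduction, I would first replace $A$ by a $G_\delta$ null set containing it: pick open sets $U_m \supseteq A$ with $U_{m+1} \subseteq U_m$ and $|U_m| \to 0$, and set $N = \bigcap_m U_m$, still of Lebesgue measure zero. Next I would introduce the primitives $\psi_m(x) = \int_{-\infty}^x \mathbf{1}_{U_m}(y)\, dy$, which are nondecreasing, $1$-Lipschitz, and bounded by $|U_m| \to 0$, so that $\psi_m \to 0$ uniformly; after a mollification at a small scale one may take $\psi_m \in C_b^1(\R)$ retaining these features, with $0 \le \psi_m' \le 1$. The chain rule then gives $\psi_m(F) \in \mathbb{D}^{1,p}$ and $D(\psi_m(F)) = \psi_m'(F)\, DF$. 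Since $\psi_m(F) \to 0$ uniformly, hence in $L^p(\Omega)$, while $\psi_m'(F)\, DF \to \mathbf{1}_{N}(F)\, DF$ in $L^p(\Omega;\H_T)$ by dominated convergence (with dominating function $\|DF\|_{\H_T} \in L^p(\Omega)$), the closedness of $D$ on $\mathbb{D}^{1,p}$ forces $\mathbf{1}_{N}(F)\, DF = 0$ almost surely, and therefore $\mathbf{1}_{A}(F)\, DF = 0$ almost surely.

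The step I expect to be the main obstacle is the limiting argument of the second paragraph: one must keep the approximating functions $\psi_m$ regular enough to apply the chain rule while simultaneously ensuring that $\psi_m'(F)\, DF \to \mathbf{1}_{N}(F)\, DF$ in $L^p(\Omega;\H_T)$, which requires calibrating the mollification scale against the shrinking sets $U_m$ (or, alternatively, invoking the chain rule for Lipschitz functions and carrying along a bounded version of $\psi_m'$). This is precisely where the Bouleau--Hirsch argument does its real work; it is carried out in detail in \cite[Theorem 2.1.2]{nualart-book}, so in the present paper the statement is used as a black box.
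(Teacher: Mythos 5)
The paper gives no proof of this proposition: it is quoted verbatim from \cite[Theorem 2.1.2]{nualart-book} and used as a black box, so there is no internal argument to compare yours against. Your sketch is the standard Bouleau--Hirsch proof of that cited result and is correct in outline; the one genuinely delicate point --- justifying $D(\psi_m(F)) = \mathbf{1}_{U_m}(F)\,DF$ (or a usable substitute) without already knowing that the law of $F$ is absolutely continuous, which is normally handled by a two-stage weak-limit argument (first in the mollification parameter for fixed $m$, identifying the derivative as $G_m\,DF$ with $G_m=1$ on $\{F\in U_m\}$, and only then letting $m\to\infty$) rather than by a single dominated-convergence pass --- is exactly the step you flag and appropriately defer to the reference.
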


When proceeding by approximations, we will rely on a technical result summarized below (see \cite[Lemma 1.2.3]{nualart-book}) in order to probe Malliavin differentiability.

\begin{proposition}\label{prop:Malliavin-approx}
  Let $\{F_\lambda: \lambda>0\}$ be a sequence of random variables such that
  \begin{equation}
  F_\lambda \in \mathbb{D}^{1,2} \ \text{ for all $\lambda>0$,}\quad
  \lim_{\lambda \to 0} \E \lc| F_\lambda - F|^2\rc= 0, \quad
  \sup_\lambda \E\lc\| D F_\lambda\|_{\H_T}^2 \rc< +\infty .
  \end{equation}
    Then $F \in \mathbb{D}^{1,2}$ and the sequence $\{D F_\lambda: \lambda>0\}$ converges weakly to $DF$ in $L^2(\Omega: \H_T)$ as $\lambda \to 0$.
\end{proposition}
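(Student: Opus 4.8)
The plan is to combine the weak compactness of bounded sets in the Hilbert space $L^{2}(\Omega;\H_T)$ with the closedness of the Malliavin derivative operator $D$ — equivalently, with the duality between $D$ and the divergence operator $\delta$. Since the hypothesis $\sup_{\lambda}\E[\|DF_{\lambda}\|_{\H_T}^{2}]<+\infty$ says exactly that $\{DF_{\lambda};\,\lambda>0\}$ is bounded in $L^{2}(\Omega;\H_T)$, and bounded sequences in a Hilbert space admit weakly convergent subsequences, I would first extract a sequence $\lambda_{n}\downarrow 0$ along which $DF_{\lambda_{n}}$ converges weakly in $L^{2}(\Omega;\H_T)$ to some element $G$.

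The heart of the argument is to identify $G$ with the Malliavin derivative of $F$. For every smooth cylindrical $\H_T$-valued random variable $u$ one has $u\in\Dom(\delta)$ with $\delta(u)\in L^{2}(\Omega)$, and the integration-by-parts formula gives $\E[\langle DF_{\lambda},u\rangle_{\H_T}]=\E[F_{\lambda}\,\delta(u)]$ for every $\lambda>0$. Letting $\lambda=\lambda_{n}\to 0$, the left-hand side converges to $\E[\langle G,u\rangle_{\H_T}]$ by weak convergence of $DF_{\lambda_{n}}$ (since $u\in L^{2}(\Omega;\H_T)$), while the right-hand side converges to $\E[F\,\delta(u)]$ because $F_{\lambda}\to F$ in $L^{2}(\Omega)$ and $\delta(u)\in L^{2}(\Omega)$. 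This yields
\[
\E\big[\langle G,u\rangle_{\H_T}\big]=\E\big[F\,\delta(u)\big]
\]
for all such $u$, and since $\delta$ is the adjoint of the closed operator $D$, this identity (holding on a sufficiently rich subclass of $\Dom(\delta)$) forces $F\in\mathbb{D}^{1,2}$ with $DF=G$. A clean alternative I would keep in reserve is to invoke Mazur's lemma, replacing $\{DF_{\lambda_{n}}\}$ by a strongly convergent sequence of convex combinations $D\big(\sum_{j}c_{j}^{(n)}F_{\lambda_{n_{j}}}\big)$ and then appealing directly to the closedness of $D$, the matching convex combinations of $F_{\lambda_{n_{j}}}$ still tending to $F$ in $L^{2}(\Omega)$.

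Once $G=DF$ is established along the subsequence, the upgrade to the full family is routine: the limit is independent of the subsequence, so for any $\mu_{n}\downarrow 0$ every weakly convergent subsequence of the bounded family $\{DF_{\mu_{n}}\}$ must converge to $DF$, whence $DF_{\mu_{n}}\rightharpoonup DF$, and since $\{\mu_{n}\}$ is arbitrary, $DF_{\lambda}\rightharpoonup DF$ as $\lambda\to 0$. I expect the only genuinely delicate point to be the identification step: one cannot pass to the limit directly in ``$DF_{\lambda}\to DF$'', which would demand strong convergence we do not have, and must instead transfer the weak limit through the duality formula (or Mazur's lemma); the essential structural input throughout is that $D$ is a closed operator on $L^{2}(\Omega)$ with adjoint $\delta$, both standard facts from the construction of Malliavin calculus in \cite{nualart-book}.
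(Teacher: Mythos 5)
The paper offers no proof of this proposition at all: it is quoted verbatim from \cite[Lemma~1.2.3]{nualart-book}, so the only comparison available is with Nualart's argument. Your proof is correct and standard, but it is a genuinely different route. Nualart identifies the weak limit through the Wiener chaos decomposition: after extracting a weakly convergent subsequence $DF_{\lambda_n}\rightharpoonup G$, he observes that the projections of $\langle DF_{\lambda_n},h\rangle_{\H_T}$ onto each chaos converge weakly to those of $\langle G,h\rangle_{\H_T}$, and then invokes the characterization of $\mathbb{D}^{1,2}$ in terms of $\sum_n n\,\E|J_nF|^2<\infty$. You instead go through the duality $\E[\langle DF_\lambda,u\rangle_{\H_T}]=\E[F_\lambda\,\delta(u)]$ and the closedness of $D$; this buys a proof that never mentions the chaos expansion and generalizes immediately to other closed derivation operators. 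One caveat on your primary route: the identity $\E[\langle G,u\rangle_{\H_T}]=\E[F\,\delta(u)]$ is only established for $u$ in the class of smooth elementary $\H_T$-valued variables, and to conclude $F\in\Dom(\delta^{*})=\mathbb{D}^{1,2}$ you need that class to be a core for $\delta$ in the graph norm (true, but not free, since $\Dom(\delta)$ is itself defined by adjointness). The Mazur's-lemma variant you keep in reserve avoids this entirely --- convex combinations of tails of $F_{\lambda_{n_j}}$ still converge to $F$ in $L^2(\Omega)$, their derivatives converge strongly to $G$, and closedness of $D$ finishes the identification --- so I would promote that to the main argument (equivalently: the graph of the closed linear operator $D$ is a closed subspace, hence weakly closed, and $(F_{\lambda_n},DF_{\lambda_n})\rightharpoonup(F,G)$ forces $(F,G)$ into the graph). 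Your final upgrade from the subsequence to the whole family via uniqueness of the weak limit is exactly as in Nualart and is fine.
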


We now state a differentiation rule for stochastic integrals that will be invoked to differentiate solutions to stochastic \textsc{pde}s.

\begin{proposition}[{Section 1.3.1 of \cite{nualart-book}}]\label{prop:mall-diff-for-stoch-intg}
Let $X$ be an adapted random field in $\D^{1,2}(\ch_{T})$, and define the It\^o stochastic integral
\begin{equation*}
\cj_{T}(X)
=
\int_{0}^{T}\int_{\R^{d}} X(s,x) \, W(ds \, dy) .
\end{equation*}
Then $\cj_{T}(X)$ is an element of $\D^{1,2}$, and for any $h\in\ch_{T}$ we have
\begin{equation*}
D_{h} \cj_{T}(X)
=
\lla X, \, h \rra_{\ch_{T}}
+
\int_{0}^{T}\int_{\R^{d}} D_{h}X(s,x) \, W(ds \, dy) .
\end{equation*}
\end{proposition}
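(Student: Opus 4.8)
The plan is to follow the standard two-step route for commuting a stochastic integral with the Malliavin derivative: first verify the identity on a dense class of elementary adapted integrands by an explicit computation with the Leibniz rule \eqref{derivative}, and then pass to the general case using the It\^o isometry together with the closedness of the operator $D$.

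\textbf{Elementary integrands.} Start with $X$ of the form $X(s,y) = \sum_{j=1}^{m} F_j \, \mathbf{1}_{(t_j, t_{j+1}]}(s) \, g_j(y)$, where $0 \le t_1 < \cdots < t_{m+1} \le T$, each $F_j$ is a smooth cylindrical random variable built from increments of $W$ up to time $t_j$ (hence $\mathcal{F}_{t_j}$-measurable), and $g_j$ is chosen so that $\psi_j := \mathbf{1}_{(t_j,t_{j+1}]} \otimes g_j \in \ch_{T}$. Then $\cj_{T}(X) = \sum_{j} F_j \, W(\psi_j)$ is smooth and cylindrical, so $\cj_{T}(X) \in \D^{1,2}$, and the Leibniz rule applied to \eqref{derivative} gives, for every $h \in \ch_{T}$,
\[
D_h \cj_{T}(X) = \sum_{j} \big(D_h F_j\big) W(\psi_j) + \sum_{j} F_j \, \langle \psi_j, h\rangle_{\ch_{T}} .
\]
The second sum equals $\langle X, h\rangle_{\ch_{T}}$ by bilinearity of the inner product in \eqref{eq:H_T}. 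For the first sum, note that $D_h F_j$ is again $\mathcal{F}_{t_j}$-measurable (differentiating a functional of increments up to time $t_j$ in a deterministic direction preserves measurability), so $D_h X(s,y) = \sum_j (D_h F_j)\,\mathbf{1}_{(t_j,t_{j+1}]}(s)\, g_j(y)$ is again an elementary adapted integrand and $\sum_j (D_h F_j) W(\psi_j) = \cj_{T}(D_h X)$. This establishes the formula for elementary $X$.

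\textbf{Passage to the limit.} Let $X$ be an arbitrary adapted element of $\D^{1,2}(\ch_{T})$ and approximate it by elementary adapted integrands $X_n$ as above with $\E\|X_n - X\|_{\ch_{T}}^2 \to 0$ and $\E\|D X_n - D X\|_{\ch_{T} \otimes \ch_{T}}^2 \to 0$. By the It\^o isometry, $\E[(\cj_{T}(X_n) - \cj_{T}(X))^2] = \E\|X_n - X\|_{\ch_{T}}^2 \to 0$, so $\cj_{T}(X_n) \to \cj_{T}(X)$ in $L^2(\Omega)$. On the derivative side, Step 1 written for $D$ rather than $D_h$ (i.e. coordinate-wise along an orthonormal basis of $\ch_{T}$) yields $D\cj_{T}(X_n) = \langle X_n, \cdot\,\rangle_{\ch_{T}} + \cj_{T}(D X_n)$ as $\ch_{T}$-valued random variables. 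The term $\langle X_n, \cdot\,\rangle_{\ch_{T}}$ converges to $\langle X, \cdot\,\rangle_{\ch_{T}}$ in $L^2(\Omega; \ch_{T})$ since $X_n \to X$ there, and for the Hilbert-space-valued stochastic integral the corresponding isometry gives $\E\|\cj_{T}(D X_n) - \cj_{T}(D X)\|_{\ch_{T}}^2 = \E\|D X_n - D X\|_{\ch_{T} \otimes \ch_{T}}^2 \to 0$, the square-integrability of the limiting integrand being exactly $X \in \D^{1,2}(\ch_{T})$. Hence $D\cj_{T}(X_n) \to \langle X, \cdot\,\rangle_{\ch_{T}} + \cj_{T}(D X)$ in $L^2(\Omega; \ch_{T})$. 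Since $D$ is a closed operator from $L^2(\Omega)$ into $L^2(\Omega;\ch_{T})$, we conclude $\cj_{T}(X) \in \D^{1,2}$ with $D\cj_{T}(X) = \langle X, \cdot\,\rangle_{\ch_{T}} + \cj_{T}(D X)$; taking the $\ch_{T}$-inner product with a fixed $h$ and interchanging it with the (Hilbert-valued) stochastic integral --- legitimate because $\langle \cdot, h\rangle_{\ch_{T}}$ is a bounded linear functional --- produces the announced identity.

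\textbf{Main obstacle.} The algebra above is routine; the one point requiring genuine care is the density statement invoked at the start of the passage to the limit, namely that adapted elementary integrands are dense, in the graph norm of $\D^{1,2}(\ch_{T})$, inside the adapted subspace --- one must arrange that the approximating sequence be simultaneously adapted and convergent together with its Malliavin derivative. This is handled by a mollification-in-time argument combined with the fact that conditioning by $\mathcal{F}_t$ is an $L^2$-contraction that interacts well with $D$, and it is precisely this technical step, carried out in Section 1.3.1 of \cite{nualart-book}, to which we ultimately defer.
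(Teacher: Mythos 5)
Your proposal is correct. The paper does not actually prove this proposition---it is quoted as a known result from Section 1.3.1 of \cite{nualart-book}---and your argument (explicit verification on elementary adapted integrands via the product rule, then a limit passage combining the It\^o isometry, its Hilbert-space-valued analogue, and the closedness of $D$, with the density of elementary adapted processes in the graph norm of $\D^{1,2}(\ch_T)$ deferred to the reference) is precisely the standard proof given there.
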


The next result gives an easy to check condition that guarantees that a stochastic convolution with the fundamental solution to the  heat equation satisfies the assumptions of Proposition \ref{prop:mall-diff-for-stoch-intg}.

\begin{proposition} \label{prop:mall-diff-stoch-conv}
  Let $X$ be an adapted random field such that for any $(t,x) \in [0,T]\times \mathbb{R}^d$, we have $X(t,x) \in \D^{1,2}$. In addition, we assume that
  \begin{equation}\label{a801}
    \sup_{t \in [0,T]}\sup_{x \in \mathbb{R}^d} \E\lc |X(t,x)|^2 \rc<+\infty,
  \end{equation}
  and
  \begin{equation}\label{a802}
    \sup_{t \in [0,T]}\sup_{x \in \mathbb{R}^d} \E\lc| DX(t,x)|_{\H_T}^2 \rc < +\infty.
  \end{equation}
  Let $G$ be the fundamental solution of the heat equation. Define the stochastic convolution
  \begin{equation} \label{eq:I-integral-def}
    \ci(t,x) = \ci^X(t,x) = \int_0^t \int_{\mathbb{R}^d} G(t-s,x-y)X(s,y)W(ds \, dy).
  \end{equation}
  Then for any $(t,x) \in [0,T]\times \mathbb{R}^d$,  $\ci(t,x)$ is an element of $\D^{1,2}$, and for any $h \in \ch_{T}$ we have
  \begin{equation} \label{eq:mall-deriv-stoch-int}
     D_{h} \ci(t,x)
     =
     \lla G(t-\cdot,x-\cdot)X(\cdot,\cdot), \, h \rra_{\ch_{T}}
     +
     \int_{0}^{t}\int_{\R^{d}} G(t-s,x-y)D_{h}X(s,y) \, W(ds \, dy) .
  \end{equation}
  In the above expression $G(t-r,x-y)$ is defined to be $0$ is $r>t$.
\end{proposition}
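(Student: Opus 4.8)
The plan is to recognize $\ci(t,x)$ as an instance of the It\^o integral $\cj_{T}$ appearing in Proposition \ref{prop:mall-diff-for-stoch-intg}, applied to the integrand obtained by convolving $X$ with the heat kernel. Concretely, fix $(t,x)\in[0,T]\times\R^{d}$ and define the random field
\[
  Y(s,y):=\1_{[0,t]}(s)\,G(t-s,x-y)\,X(s,y),\qquad (s,y)\in[0,T]\times\R^{d},
\]
so that, with the convention $G(t-s,\cdot)=0$ for $s>t$, one has $\ci(t,x)=\cj_{T}(Y)$. Since $G$ is deterministic and $X$ is adapted, $Y$ is adapted; moreover $Y(s,y)\in\D^{1,2}$ for every $(s,y)$, and since $D$ commutes with multiplication by the deterministic function $G$ we have the $\ch_{T}$-valued identity $DY(s,y)=\1_{[0,t]}(s)\,G(t-s,x-y)\,DX(s,y)$. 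The only thing that needs to be checked before invoking Proposition \ref{prop:mall-diff-for-stoch-intg} is that $Y\in\D^{1,2}(\ch_{T})$.

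Next I would establish the two integrability bounds that promote the pointwise information on $X$ to membership of $Y$ in $\D^{1,2}(\ch_{T})$. Writing out the inner product \eqref{eq:H_T}, using Fubini and Cauchy--Schwarz on the product $X(s,y_{1})X(s,y_{2})$, and invoking the uniform moment bound \eqref{a801}, one obtains
\[
  \E\!\left[\|Y\|_{\ch_{T}}^{2}\right]\;\le\; C\iot\!\int_{\R^{d}}\!\int_{\R^{d}}G(t-s,x-y_{1})\,G(t-s,x-y_{2})\,\Lambda(y_{1}-y_{2})\,dy_{1}\,dy_{2}\,ds .
\]
By Plancherel's identity the right-hand side equals $C\iot\int_{\R^{d}}e^{-(t-s)|\xi|^{2}}\,\mu(d\xi)\,ds=C\int_{\R^{d}}\frac{1-e^{-t|\xi|^{2}}}{|\xi|^{2}}\,\mu(d\xi)$, which is finite because the integrand is $O\big((1+|\xi|^{2})^{-1}\big)$ and Assumption \ref{assum:dalang} holds. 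Running the very same computation with $DX$ in place of $X$ and with the bound \eqref{a802} yields $\E\big[\|DY\|_{\ch_{T}\otimes\ch_{T}}^{2}\big]<+\infty$. Since $X(s,y)\in\D^{1,2}$ pointwise with these uniform second-moment bounds, a standard closedness argument (approximating $X$ by simple adapted processes and using that $D$ is a closed operator on $\ch_{T}$-valued random variables) shows that $Y\in\D^{1,2}(\ch_{T})$, with Malliavin derivative given by the pointwise expression $DY(s,y)$ displayed above.

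With $Y\in\D^{1,2}(\ch_{T})$ adapted, Proposition \ref{prop:mall-diff-for-stoch-intg} applies and gives, for every $h\in\ch_{T}$,
\[
  D_{h}\cj_{T}(Y)=\lla Y,\,h\rra_{\ch_{T}}+\int_{0}^{T}\!\int_{\R^{d}}D_{h}Y(s,y)\,W(ds\,dy).
\]
It remains to identify each term with the corresponding term in \eqref{eq:mall-deriv-stoch-int}: $\cj_{T}(Y)=\iot\int_{\R^{d}}G(t-s,x-y)X(s,y)\,W(ds\,dy)=\ci(t,x)$, so in particular $\ci(t,x)\in\D^{1,2}$; $\lla Y,h\rra_{\ch_{T}}=\lla G(t-\cdot,x-\cdot)X(\cdot,\cdot),\,h\rra_{\ch_{T}}$; and, using again $G(t-s,\cdot)=0$ for $s>t$, the last integral equals $\iot\int_{\R^{d}}G(t-s,x-y)D_{h}X(s,y)\,W(ds\,dy)$. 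This is precisely \eqref{eq:mall-deriv-stoch-int}.

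The heat-kernel and Dalang-type estimates are routine. The one point that requires care is the passage from ``$X(s,y)\in\D^{1,2}$ for each $(s,y)$ with uniformly bounded second moments of $X$ and $DX$'' to ``$Y\in\D^{1,2}(\ch_{T})$ with the expected derivative'', i.e.\ making the closedness argument rigorous; this also requires the joint measurability in $(s,y,\omega)$ of $X$ and of the $\ch_{T}$-valued map $(s,y)\mapsto DX(s,y)$, so that the $\ch_{T}$- and $\ch_{T}\otimes\ch_{T}$-valued integrals defining $\|Y\|_{\ch_{T}}$ and $\|DY\|_{\ch_{T}\otimes\ch_{T}}$ make sense. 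This is standard in the theory of stochastic convolutions (cf.\ \cite{nualart-book}), and is the only genuine obstacle.
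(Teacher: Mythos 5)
Your proposal is correct and follows essentially the same route as the paper: both reduce the claim to Proposition \ref{prop:mall-diff-for-stoch-intg} by verifying that the integrand $G(t-\cdot,x-\cdot)X(\cdot,\cdot)$ lies in $\mathbb{D}^{1,2}(\H_T)$, bounding $\E\|G(t-\cdot,x-\cdot)X\|_{\H_T}^2$ and the analogous quantity for $DX$ by $Q_\Lambda(t)$ times the uniform second moments, and using the Fourier/Dalang computation to see $Q_\Lambda(t)<\infty$. The closedness/measurability point you flag at the end is also left implicit in the paper's own proof, so there is no substantive difference.
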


\begin{proof}
  To apply Proposition \ref{prop:mall-diff-for-stoch-intg}, we need to first verify that for fixed $(t,x) \in [0,T]\times \mathbb{R}^d$, the integrand $(s,y) \mapsto G(t-s,x-y)X(s,y)$ is an element of $\mathbb{D}^{1,2}(\H_T)$. Fortunately, a straightforward consequence of the fact that $G(\cdot,\cdot)$ and $\Lambda(\cdot)$ are positive is that
  \begin{align} \label{eq:sigma-u-in-L2-H}
    &\E|G(t-\cdot,x-\cdot)X(\cdot,\cdot)|_{\H_T}^2 \nonumber\\
    &=\E\int_0^t \int_{\mathbb{R}^d} \int_{\mathbb{R}^d}G(t-s,x-y_1)G(t-s,x-y_2)X(s,y_1)X(s,y_2)\Lambda(y_1-y_2)dy_1dy_2ds \nonumber\\
    &\leq C \left( \sup_{s \in [0,t]} \sup_{y \in \mathbb{R}^d} \E|X(s,y)|^2 \right)   Q_{\laa}(t) \,
  \end{align}
  where we have set
  \begin{equation}\label{a81}
  Q_{\laa}(t)
  =
  \int_0^t \int_{\mathbb{R}^d}\int_{\mathbb{R}^d} G(t-s,x-y_1)G(t-s,x-y_2)\Lambda(y_1-y_2)dy_1dy_2ds \, .
  \end{equation}
  Along the same lines, we also have
  \begin{multline*}
  \E|G(t-\cdot,x-\cdot)DX(\cdot,\cdot)|_{\H_T}^2
    =\E\int_0^t \int_{\mathbb{R}^d}\int_{\mathbb{R}^d} G(t-s,x-y_1)G(t-s,x-y_2) \\
\times    \left<DX(s,y_1), DX(s,y_2) \right>_{\ch_{T}}\Lambda(y_1-y_2)dy_1dy_2ds,
  \end{multline*}
  and therefore
  \begin{equation}\label{a82}
   \E|G(t-\cdot,x-\cdot)DX(\cdot,\cdot)|_{\H_T}^2
   \leq
   C \left(\sup_{s \in [0,t]} \sup_{y \in \mathbb{R}^d} \E|DX(s,y)|_{\H_T}^2 \right)
    Q_{\laa}(t) .
  \end{equation}
    Now
  taking Fourier transforms as in \cite{dalang}, Assumption \ref{assum:dalang} guarantees that
  \begin{equation}\label{a821}
    Q_{\laa}(t) = \int_0^t \int_{\mathbb{R}^d} e^{-(t-s)|\xi|^2} \mu(d\xi) <+\infty.
  \end{equation}
Plugging this relation into \eqref{eq:sigma-u-in-L2-H} and \eqref{a82}, then taking hypothesis~\eqref{a801}-\eqref{a802} into account, our claim is a direct consequence of
  Proposition \ref{prop:mall-diff-for-stoch-intg}.
\end{proof}

\section{Malliavin differentiability of $\mathcal{M}$}\label{sec:mall-diff-M}
In Section \ref{sec:approx-scheme} we gave the iteration scheme allowing to solve~\eqref{eq:mild-solution}. We shall now follow the very same scheme in order to show Malliavin differentiability, and we start by analyzing the mapping $\cm$ defined by~\eqref{eq:M-def}. Namely
we showed in \cite{s-2021} that  $\mathcal{M}$ is a Lipschitz continuous map on the weighted spaces $\cac_{\theta,x_0}([0,T]\times \mathbb{R}^d)$. In this section we prove the following proposition about the Malliavin differentiability of $\mathcal{M}(z)$.

\begin{proposition} \label{prop:M-Malliavin}
Let $\theta \in \left( 0, \frac{2}{\nu} \right)$ where $\nu$ is from \eqref{eq:f-exp-bound}, and pick a generic $x_0 \in \mathbb{R}^d$.
Denote by $L^2(\Omega:\cac_{\theta,x_0}([0,T]\times\mathbb{R}^d))$ the set of $\cac_{\theta,x_0}([0,T]\times\mathbb{R}^d))$-valued square integrable random variables.
Consider
$z \in L^2(\Omega:\cac_{\theta,x_0}([0,T]\times\mathbb{R}^d))$ which has the properties that $z(t,x)$ is Malliavin differentiable for all $(t,x) \in [0,T]\times \mathbb{R}^d$ and
  \begin{equation} \label{oo}
\E  \left|\sup_{t \in [0,T]} \sup_{x \in \mathbb{R}^d} \frac{|Dz(t,x)|_{\H_T}}{1 + |x-x_0|^\theta} \right|^2<  +\infty.
  \end{equation}
  Let $\cm(z)$ be given as in Definition~\ref{def:M} and assume Assumptions \ref{assum:sig}--\ref{assum:f} for $\sigma$ and $b$.
  Then $\mathcal{M}(z)(t,x)$ is also Malliavin differentiable  for all $(t,x) \in [0,T]\times \mathbb{R}^d$ and almost surely we have
  \begin{equation} \label{eq:M-Malliavin-Lipschitz}
    \sup_{t \in [0,T]} \sup_{x \in \mathbb{R}^d} \frac{|D[\mathcal{M}(z)(t,x)]|_{\H_T}}{1 + |x-x_0|^\theta} \leq K \sup_{t \in [0,T]} \sup_{x \in \mathbb{R}^d} \frac{|Dz(t,x)|_{\H_T}}{1 + |x-x_0|^\theta} ,
  \end{equation}
  where $K=K(T,\theta,\kappa)$ is also the Lipschitz constant of $\mathcal{M}$ in~\eqref{eq:M-Lip}, which does not depend on $x_0 \in \mathbb{R}^d$.
\end{proposition}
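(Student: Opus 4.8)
The plan is to mimic the Yosida-approximation scheme that produces $\mathcal{M}$ itself. For each $\lambda>0$, let $\mathcal{M}_\lambda(z)$ be the solution of \eqref{eq:M-def} with $f$ replaced by the smooth Lipschitz approximation $f_\lambda$ from Lemma \ref{lem:f-Yosida-properties}. Since $f_\lambda$ is globally Lipschitz and $C^1$, a standard fixed-point/Picard argument inside the weighted space $\cac_{\theta,x_0}$ combined with Proposition \ref{prop:mall-diff-for-stoch-intg} (or more precisely the deterministic-convolution analogue of Proposition \ref{prop:mall-diff-stoch-conv}) shows that $\mathcal{M}_\lambda(z)(t,x)\in\mathbb{D}^{1,2}$ for all $(t,x)$, and that its Malliavin derivative $v_\lambda:=D[\mathcal{M}_\lambda(z)(t,x)]$, as an $\H_T$-valued function of $(t,x)$, solves the linear integral equation
\begin{equation}\label{eq:DMlambda-eqn}
  v_\lambda(t,x)=\int_0^t\!\!\int_{\mathbb{R}^d} G(t-s,x-y)\, f_\lambda'\big(\mathcal{M}_\lambda(z)(s,y)\big)\, v_\lambda(s,y)\,dyds + Dz(t,x).
\end{equation}
The first task is therefore to set up \eqref{eq:DMlambda-eqn} rigorously, e.g. by differentiating the Picard iterates defining $\mathcal{M}_\lambda(z)$ and passing to the limit using Proposition \ref{prop:Malliavin-approx}, noting that $\|f_\lambda'\|_\infty\le \tfrac2\lambda+\kappa$ gives a (non-uniform in $\lambda$) a priori control.

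**The key uniform estimate.** The heart of the matter is to bound \eqref{eq:DMlambda-eqn} \emph{uniformly in $\lambda$}, obtaining the analogue of \eqref{eq:M-Malliavin-Lipschitz} for $v_\lambda$ with the same constant $K=K(T,\theta,\kappa)$. This is exactly the place where the half-Lipschitz structure is used: $v_\lambda$ is $\H_T$-valued, and one works with the scalar quantity $|v_\lambda(t,x)|_{\H_T}$. Observe that \eqref{eq:DMlambda-eqn} is the mild form of the PDE $\partial_t v_\lambda = \tfrac12\Delta v_\lambda + f_\lambda'(\mathcal{M}_\lambda(z))\,v_\lambda$, whose norm-valued version reads, by a differential inequality, $\partial_t |v_\lambda|_{\H_T} \le \tfrac12\Delta |v_\lambda|_{\H_T} + f_\lambda'(\cdots)\,|v_\lambda|_{\H_T}$ (the source term controlled using the Cauchy--Schwarz inequality in $\H_T$ and $\langle \Delta v_\lambda, v_\lambda\rangle_{\H_T}\le |v_\lambda|_{\H_T}\Delta|v_\lambda|_{\H_T}$ at regular points). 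Since $f_\lambda'\le \kappa$ by construction (property (v) and monotonicity of $\phi_\lambda$ give $\phi_\lambda'\le 0$, hence $f_\lambda'\le\kappa$), we are in precisely the situation handled in the proof of Theorem \ref{thm:M-Lip-inhomo}: dividing by the weight $\rho(x)=(1+|x-x_0|^2)^{\theta/2}$, evaluating at a spatial maximizer $x_t$, and using $\nabla\tilde q(t,x_t)=0$, $\Delta\tilde q(t,x_t)\,\sgn(\tilde q(t,x_t))\le 0$, $\sup_x|\Delta\rho/\rho|<\infty$, we obtain $\frac{d^-}{dt}\max\{|\tilde v_\lambda(t,\cdot)/\rho|_{\cac_0},M\}\le C\max\{\cdots\}$ with $M=\sup_t|Dz(t,\cdot)/\rho|_{\cac_0}$ and $C=C(\kappa,\theta)$. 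Grönwall and the null initial condition of the "$v_\lambda - Dz$" part then yield
\begin{equation*}
  \sup_{t\in[0,T]}\sup_{x\in\mathbb{R}^d}\frac{|v_\lambda(t,x)|_{\H_T}}{1+|x-x_0|^\theta}\le K\sup_{t\in[0,T]}\sup_{x\in\mathbb{R}^d}\frac{|Dz(t,x)|_{\H_T}}{1+|x-x_0|^\theta},
\end{equation*}
with $K=e^{CT}$ the same constant as in \eqref{eq:M-Lip}. I expect this step to be routine once the scalar-PDE argument of Theorem \ref{thm:M-Lip-inhomo} is transcribed to the $\H_T$-valued setting, the only genuine subtlety being to justify the differential inequality for $|v_\lambda|_{\H_T}$ (weak differentiability, regularization by resolvents, and the inner-product computations) — morally, $v\mapsto|v|_{\H_T}^2$ is smooth with $\nabla(|v|_{\H_T}^2)\cdot\Delta v = \Delta(|v|_{\H_T}^2) - 2|\nabla v|_{\H_T}^2 \le \Delta(|v|_{\H_T}^2)$, which suffices.

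**Passing to the limit.** Finally I would let $\lambda\to0$. From \eqref{eq:u_n-conv}-type arguments in \cite{s-2021} one has $\mathcal{M}_\lambda(z)(t,x)\to\mathcal{M}(z)(t,x)$ in $L^2(\Omega)$ (indeed in the weighted space), and the uniform bound just established gives $\sup_\lambda\E[\,|D[\mathcal{M}_\lambda(z)(t,x)]|_{\H_T}^2\,]<\infty$ for each fixed $(t,x)$ — here one needs the hypothesis \eqref{oo} together with $\E\,|z|^2_{L^2(\Omega:\cac_{\theta,x_0})}<\infty$ to ensure the right-hand side of \eqref{eq:M-Malliavin-Lipschitz} has finite second moment, so that the uniform-in-$\lambda$ $\mathbb{D}^{1,2}$ bound holds. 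Proposition \ref{prop:Malliavin-approx} then yields $\mathcal{M}(z)(t,x)\in\mathbb{D}^{1,2}$ and $D[\mathcal{M}_\lambda(z)(t,x)]\rightharpoonup D[\mathcal{M}(z)(t,x)]$ weakly in $L^2(\Omega:\H_T)$. The almost-sure weighted bound \eqref{eq:M-Malliavin-Lipschitz} for the limit is inherited from the $v_\lambda$ bound: the weighted-sup functional is convex and lower-semicontinuous, so it passes to weak limits; alternatively, one shows directly that $D[\mathcal{M}(z)(t,x)]$ solves \eqref{eq:DMlambda-eqn} with $f'$ in place of $f_\lambda'$ (using $f_\lambda'\to f'$ pointwise and dominated convergence) and reruns the Grönwall estimate on the limiting equation. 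The main obstacle throughout is the rigorous justification of the $\H_T$-valued differential inequality and the limit passage in the nonlinear coefficient $f_\lambda'(\mathcal{M}_\lambda(z))$; everything else is a faithful repetition of the scalar theory already developed in Section \ref{sec:approach}.
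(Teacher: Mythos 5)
Your overall architecture coincides with the paper's: approximate $f$ by its Yosida regularizations $f_\lambda$, establish Malliavin differentiability of $\mathcal{M}_\lambda(z)$ in the Lipschitz case, prove a bound on $D[\mathcal{M}_\lambda(z)]$ that is uniform in $\lambda$ because it depends only on the half-Lipschitz constant $\kappa$, and conclude via Proposition \ref{prop:Malliavin-approx}. The one genuine divergence is in how the key $\lambda$-uniform estimate is obtained. You propose to work with the $\H_T$-valued derivative $v_\lambda$ directly and derive a parabolic differential inequality for the scalar field $|v_\lambda(t,x)|_{\H_T}$ via a Kato-type inequality $\langle \Delta v_\lambda, v_\lambda\rangle_{\H_T}\le |v_\lambda|_{\H_T}\,\Delta|v_\lambda|_{\H_T}$, and then rerun the weighted maximum-principle argument. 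The paper instead avoids any Hilbert-valued calculus: for each \emph{fixed} $h\in\H_T$ the directional derivative $D_h m_\lambda(t,x)$ is a real-valued function solving the linear scalar equation \eqref{eq:Dm-h-mild}, whose reaction term $\varphi(t,x,V)=f'_\lambda(m_\lambda(t,x))V$ is half-Lipschitz in $V$ with constant $\kappa$; Theorem \ref{thm:M-Lip-inhomo} (comparison with the zero solution) then gives $|D_h m_\lambda|\le K|D_h z|$ in the weighted norm with $K=K(T,\theta,\kappa)$, and the $\H_T$-norm bound \eqref{eq:Dm-lambda-bound} follows by taking the supremum over $|h|_{\H_T}=1$ (a countable dense set suffices). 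What the paper's route buys is precisely the elimination of the step you flag as the ``only genuine subtlety'': there is no need to justify the vector-valued differential inequality, regularize near the zero set of $v_\lambda$, or verify that the maximizer argument survives the passage to the norm; the scalar machinery of Section \ref{sec:approach} is reused verbatim. What your route would buy, if carried out, is a single estimate without the supremum over directions. Two smaller points: the $L^2(\Omega)$ convergence $\mathcal{M}_\lambda(z)(t,x)\to\mathcal{M}(z)(t,x)$ is not available off the shelf from \cite{s-2021} — the paper has to prove it (Lemma \ref{lem:Yosida-conv}) by an Arzel\`a--Ascoli argument using the a.s.\ bound \eqref{eq:m-lambda-bound} and dominated convergence, so this step deserves more than a citation; and the passage of the almost-sure weighted bound to the weak limit is best handled via Mazur's lemma (convex combinations converging strongly, hence a.s.\ along a subsequence) rather than abstract lower semicontinuity, since weak $L^2(\Omega:\H_T)$ convergence alone does not transport pointwise a.s.\ inequalities. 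Neither issue is fatal, and with the Kato-inequality step justified your proof would go through.
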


We prove Proposition~\ref{prop:M-Malliavin} in several steps. First, we prove this in the simpler case where $f$ is globally Lipschitz continuous.
\begin{lemma} \label{lem:M-Malliavin-if-Lipschitz}
Let $\theta \in \left( 0, \frac{2}{\nu} \right)$ where $\nu$ is from \eqref{eq:f-exp-bound}.  Let $x_0 \in \mathbb{R}^d$.
  Assume that $f:\mathbb{R} \to \mathbb{R}$ is differentiable and globally Lipschitz continuous and that $\sup_u f'(u) \leq \kappa$. Then if $z \in L^2(\Omega:\cac_{\theta,x_0}([0,T]\times\mathbb{R}^d))$  has the properties that $z(t,x)$ is Malliavin differentiable for all $(t,x) \in [0,T]\times \mathbb{R}^d$ and
  \begin{equation}\label{a83}
\E \Bigg|\sup_{t \in [0,T]} \sup_{x \in \mathbb{R}^d} \frac{|Dz(t,x)|_{\H_T}}{1 + |x-x_0|^\theta} \Bigg|^2< +\infty,
  \end{equation}
  then $\mathcal{M}(z)$ is also Malliavin differentiable and with probability one we have
  \begin{equation}\label{a9}
    \sup_{t \in [0,T]} \sup_{x \in \mathbb{R}^d} \frac{|D[\mathcal{M}(z)(t,x)]|_{\H_T}}{1 + |x-x_0|^\theta} \leq K \sup_{t \in [0,T]} \sup_{x \in \mathbb{R}^d} \frac{|Dz(t,x)|_{\H_T}}{1 + |x-x_0|^\theta} \, ,
  \end{equation}
  where $K=K(T,\theta,\kappa)$ is also the Lipschitz constant of $\mathcal{M}$. Notice that $K$ depends on $\kappa$, the upper bound of $f'(u)$, but does not depend on the lower bound of $f'(u)$.
\end{lemma}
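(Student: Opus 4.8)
The plan is to differentiate, at the level of Malliavin derivatives, the fixed-point equation \eqref{eq:M-def} defining $\cm(z)$, and then to run the parabolic maximum-principle argument behind Theorem~\ref{thm:M-Lip-inhomo} on the resulting \emph{linear} equation, so that the constant only ever sees the one-sided bound $f'\le\kappa$ and not $L:=\sup_u|f'(u)|$. Since $f$ is globally Lipschitz here, $\cm(z)$ may be built by the ordinary Picard iteration $v_0=z$, $v_{n+1}(t,x)=\int_0^t\int_{\R^d}G(t-s,x-y)f(v_n(s,y))\,dy\,ds+z(t,x)$, which converges to $\cm(z)$ in $\cac_{\theta,x_0}([0,T]\times\R^d)$ for a.e.\ $\omega$ (no Yosida approximation is needed; alternatively invoke Proposition~\ref{prop:M-Lip}). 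By induction on $n$ I would show $v_n(t,x)\in\D^{1,2}$ for every $(t,x)$, with
\[
  Dv_{n+1}(t,x)=\int_0^t\int_{\R^d}G(t-s,x-y)\,f'(v_n(s,y))\,Dv_n(s,y)\,dy\,ds+Dz(t,x):
\]
the base case is the hypothesis on $z$, and for the inductive step $f\in C^1$ with bounded derivative gives $f(v_n(s,y))\in\D^{1,2}$ with $D[f(v_n(s,y))]=f'(v_n(s,y))\,Dv_n(s,y)$ (chain rule, \cite{nualart-book}), while the deterministic space--time integral commutes with $D$ by closedness of $D$ together with Bochner integrability of $(s,y)\mapsto G(t-s,x-y)f(v_n(s,y))$ into $\D^{1,2}(\H_T)$ (which uses the bounds below).

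Taking $\H_T$-norms in this recursion, using $|f'|\le L$ and the elementary estimate $\sup_x\int_{\R^d}G(r,x-y)\frac{1+|y-x_0|^\theta}{1+|x-x_0|^\theta}\,dy\le C(T,\theta)$ for $r\in[0,T]$, one obtains $R_{n+1}(t)\le LC\int_0^tR_n(s)\,ds+R$, where $R_n(t):=\sup_{s\le t}\sup_x\frac{|Dv_n(s,x)|_{\H_T}}{1+|x-x_0|^\theta}$ and $R:=\sup_{t,x}\frac{|Dz(t,x)|_{\H_T}}{1+|x-x_0|^\theta}$; iterating gives $\sup_nR_n(T)\le Re^{LCT}$ a.s., which by \eqref{a83} lies in $L^2(\Omega)$, so in particular $\sup_n\E|Dv_n(t,x)|_{\H_T}^2<\infty$ for each $(t,x)$. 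Since also $v_n(t,x)\to\cm(z)(t,x)$ in $L^2(\Omega)$, Proposition~\ref{prop:Malliavin-approx} yields $\cm(z)(t,x)\in\D^{1,2}$ and $Dv_n(t,x)\rightharpoonup D[\cm(z)](t,x)$ weakly in $L^2(\Omega;\H_T)$; a Gronwall estimate for $Dv_n-D[\cm(z)]$ (using $v_n\to\cm(z)$, continuity of $f'$, and dominated convergence) upgrades this to pathwise convergence in the weighted $\H_T$-norm, so that $W:=D[\cm(z)]$ inherits the same weighted bound both in $L^2(\Omega)$ and pathwise. Differentiating the identity $\cm(z)=\int_0^t\int G(t-s,x-y)f(\cm(z)(s,y))\,dy\,ds+z$ (again by the chain rule and closedness of $D$, now legitimate thanks to these bounds) then shows that, for a.e.\ $\omega$, $W$ solves the linear mild equation $W(t,x)=\int_0^t\int_{\R^d}G(t-s,x-y)f'(\cm(z)(s,y))W(s,y)\,dy\,ds+Dz(t,x)$, with $\sup_{t,x}\frac{|W(t,x)|_{\H_T}}{1+|x-x_0|^\theta}<\infty$.

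This linear equation is of the form $\cl(Dz)$ in the sense of \eqref{eq:varphi-mild} with $\varphi(s,y,u):=f'(\cm(z)(s,y))\,u$. This $\varphi$ is half-Lipschitz in $u$ with the \emph{same} constant $\kappa$ --- for $u_1>u_2$, $\varphi(s,y,u_1)-\varphi(s,y,u_2)=f'(\cm(z)(s,y))(u_1-u_2)\le\kappa(u_1-u_2)$ because $f'\le\kappa$ --- and it satisfies the growth bound \eqref{eq:varphi-growth-restriction} with $\beta=0$, $\nu=1$ since $|f'|\le L$. Because $\theta\in(0,2/\nu)\subset(0,2)$, and because $u\equiv0$ also solves \eqref{eq:varphi-mild} with source $0$ (as $\varphi(s,y,0)=0$), the estimate \eqref{eq:increments-cl} of Theorem~\ref{thm:M-Lip-inhomo}, applied pathwise with $z_1=Dz$, $z_2=0$, gives exactly \eqref{a9} with $K=K(T,\theta,\kappa)$ independent of $L$ and of $x_0$. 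That $W$ is $\H_T$-valued whereas Theorem~\ref{thm:M-Lip-inhomo} is scalar is harmless by linearity: either test against a fixed $g\in\H_T$, so $\langle W(t,x),g\rangle_{\H_T}$ solves the scalar equation with coefficient $f'(\cm(z)(s,y))$ and source $\langle Dz(t,x),g\rangle_{\H_T}$, then take the supremum over $|g|_{\H_T}\le1$; or re-run the maximizer computation of Theorem~\ref{thm:M-Lip-inhomo} with $|\cdot|_{\H_T}$ in place of $|\cdot|$.

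The crux is the last step: obtaining a constant depending only on the one-sided bound $\kappa$ and not on $L=\sup|f'|$. The naive Gronwall bound above only yields the latter, and the refinement genuinely requires re-deriving the weighted estimate through the parabolic maximum-principle argument of Theorem~\ref{thm:M-Lip-inhomo}, where $f'\le\kappa$ is used at a spatial maximizer of the weighted quotient. This $\kappa$-only dependence is precisely what makes the bound survive the limit $\lambda\to0$ of the Yosida approximations $f_\lambda$ --- whose Lipschitz constants blow up while their half-Lipschitz constants stay equal to $\kappa$ --- when Proposition~\ref{prop:M-Malliavin} is deduced from this lemma. A secondary technical point, routine given the uniform moment bounds, is the commutation of $D$ with the space--time convolution and the limit passage identifying the equation solved by $D[\cm(z)]$.
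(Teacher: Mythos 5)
Your proposal is correct and follows essentially the same route as the paper's proof: Picard iteration with a crude $L$-dependent Gronwall bound to secure uniform $L^2(\Omega;\H_T)$ control, Proposition~\ref{prop:Malliavin-approx} to pass to the limit, identification of the linear mild equation for the derivative, and finally an application of Theorem~\ref{thm:M-Lip-inhomo} to $\varphi(s,y,V)=f'(\mathcal{M}(z)(s,y))V$ with the trivial solution as comparison to extract a constant depending only on $\kappa$. Your treatment of the Hilbert-space-valued derivative by testing against unit directions $h\in\H_T$ and taking the supremum is also exactly what the paper does.
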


\begin{proof}
  We define a sequence of functions $\{m_n: n\geq 1\}$ by Picard iterations by
  \begin{align*}
   & m_1(t,x) = z(t,x)\\
    &m_{n+1}(t,x) = \int_0^t \int_{\mathbb{R}^d} G(t-s,x-y)f(m_n(s,y))dyds + z(t,x).
  \end{align*}
  Standard arguments based on the Lipschitz continuity of $f$ show that $m_n$ converge to $m:=\mathcal{M}(z)$ in the $L^2(\cac_{\theta,x_0}([0,T]\times \mathbb{R}^d))$ topology.
  Furthermore, $m_1(t,x)$ is Malliavin differentiable by assumption. Then by induction and using the fact that integration and Malliavin differentiability commute,
  $m_n(t,x)$ is Malliavin differentiable  for all $n\ge 1$ and
  \begin{equation} \label{eq:Dm-mild-Lipschitz}
    Dm_{n+1}(t,x) = \int_0^t \int_{\mathbb{R}^d} G(t-s,x-y) f'(m_n(s,y))Dm_n(s,y)dyds + Dz(t,x).
  \end{equation}
  Notice that in order to get \eqref{eq:Dm-mild-Lipschitz}, we imposed the additional assumption that $f'$ is bounded. Let $L:=\sup_{u \in \mathbb{R}} |f'(u)|$. We also set
  \begin{equation}\label{a10}
    \Phi_n(t): = \sup_{x \in \mathbb{R}^d} \sup_{s \in [0, t]}\frac{|D m_n(s,x)|_{\H_T}}{1 + |x-x_0|^\theta}.
  \end{equation}
  We assumed in the statement of the lemma that $\Phi_1(t)$ is finite with probability one, since $m_{1}=z$ and $z$ satisfies~\eqref{a83}. From \eqref{eq:Dm-mild-Lipschitz} we can see that for any $n\ge 1$ and $(t,x)\in[0,T]\times\R^{d}$ we have
  \begin{equation}\label{b1}
    |Dm_{n+1}(t,x)|_{\H_T} \leq \int_0^t  \int_{\mathbb{R}^d} G(t-s,x-y) L |Dm_n(s,y)|_{\H_T}dyds + |Dz(t,x)|_{\H_T}.
  \end{equation}
  Next, we observe that $|Dm_n(s,y)| \leq \Phi_n(s)(1 + |y|^\theta)$ by the definition \eqref{a10} of $\Phi_{n}$. Furthermore, due to the fact that $G$ is a Gaussian kernel, we have
  \begin{equation}\label{b2}
    \int_{\mathbb{R}^d} G(t-s,x-y) (1 + |y-x_0|^\theta) dy\leq C ( 1 + (t-s)^{\frac{ \theta}{2}} + |x-x_0|^\theta).
  \end{equation}
  Plugging \eqref{b2} into \eqref{b1}, there exists $C_T>0$ such that for any $t \in [0,T]$
  \begin{equation}
    \Phi_{n+1}(t) \leq C_T \int_0^t \Phi_n(s) ds + \sup_{r \in [0,T]} \sup_{x \in \mathbb{R}^d} \frac{|Dz(r,x)|_{\ch_{T}}}{1 + |x-x_0|^\theta}.
  \end{equation}
  Hence it is easily seen by induction that $\Phi_n$ satisfies the following inequality, uniformly in $n$,
  \begin{equation}
    \sup_{t \in [0,T]} \Phi_{n}(t) \leq e^{C_T T}  \sup_{t \in [0,T]} \sup_{x \in \mathbb{R}^d} \frac{|Dz(t,x)|}{1 + |x-x_0|^\theta} ,
  \end{equation}
  with probability one.
In particular, if we fix $(t,x)\in [0,T]\times\mathbb{R}^d$, we get
  \begin{equation}
    \sup_n \E |Dm_n(t,x)|^2 \leq c_{T,x}\,  \E \left|\sup_{r \in [0,T]} \sup_{x \in \mathbb{R}^d} \frac{|Dz(r,x)|}{1 + |x-x_0|^\theta} \right|^2<+\infty.
  \end{equation}
  Therefore by Proposition \ref{prop:Malliavin-approx} and the fact that $\E|m_n(t,x) - m(t,x)|^2 \to 0,$ the random variable $m(t,x)$ is Malliavin differentiable and $Dm_n(t,x)$ converges weakly to $Dm(t,x)$ in $L^2(\Omega; \H_T)$ as $n \to \infty$. Taking limits in~\eqref{eq:Dm-mild-Lipschitz} thanks to a standard procedure, $Dm(t,x)$ must be the  solution to
  \begin{equation} \label{eq:Dm-mild-Lipschitz-2}
    Dm(t,x) = \int_0^t G(t-s,x-y) f'(m(s,y))Dm(s,y)dyds + Dz(t,x),
  \end{equation}
  where we recall that \eqref{eq:Dm-mild-Lipschitz-2} admits a unique solution if $\sup_u |f'(u)|=L<\infty$.

Now that we have shown that $Dm(t,x)$ exists and solves \eqref{eq:Dm-mild-Lipschitz-2}, we argue that we can improve the bound on $|Dm(t,x)|_{\H_T}$ so that it depends only on the upper bound $\kappa:= \sup_u f'(u) $ and not on the Lipschitz constant $L=\sup_u |f'(u)|$.
To this aim, by the linearity of \eqref{eq:Dm-mild-Lipschitz-2}, for any $h \in \H_T$,
\begin{equation} \label{eq:Dm-h-mild}
  D_hm(t,x) = \int_0^t G(t-s,x-y) f'(m(s,y))D_hm(t,x)dyds + D_hz(t,x).
\end{equation}
Recall that $m = \mathcal{M}(z)$ is the unique solution to \eqref{eq:M-def} and define the function $\varphi:[0,T]\times\mathbb{R}^d\times \mathbb{R}\to \mathbb{R}$ by
\begin{equation}
  \varphi(t,x,V) = f'(m(t,x))V.
\end{equation}
Because $\kappa:= \sup_u f'(u)<+\infty$, the following inequality holds for any $t \in [0,T]$, $x \in \mathbb{R}^d$, and $V_1> V_2 $:
\begin{equation}
  \varphi(t,x,V_1) - \varphi(t,x,V_2) \leq \kappa (V_1 - V_2).
\end{equation}
The growth condition \eqref{eq:varphi-growth-restriction} is fulfilled for $\vp$ because $m \in \cac_{\theta,x_0}$ and $f$ is globally Lipschitz continuous by assumption. Hence for $L = \sup_u|f'(u)|$,
\begin{equation}
  |\varphi(t,x,V)| \leq L  |V| .
\end{equation}
Therefore, $D_h m$ satisfies the assumptions of Theorem \ref{thm:M-Lip-inhomo}. Moreover, it is readily checked that for $D_h z \equiv 0$, equation~\eqref{eq:Dm-h-mild} admits $D_h m(t,x) =0$ as a unique solution. Hence there exists $K=K(T,\theta,\kappa)$ depending only on $T, \theta,$ and $\kappa$, but not $L$, such that
  \begin{equation}
    \sup_{t \in [0,T]} \sup_{x \in \mathbb{R}^d} \frac{|D_h m(t,x) -0 |}{1 + |x-x_0|^\theta} \leq K \sup_{t \in [0,T]} \sup_{x \in \mathbb{R}^d} \frac{|D_h z(t,x)-0|}{1 + |x-x_0|^\theta} \, ,
  \end{equation}
  with probability one.
  In particular, for any $t \in [0,T], x \in \mathbb{R}^d$ and $|h|_{\H_T}=1$
  \begin{equation}
    \frac{|D_h m(t,x)|}{1 + |x-x_0|^\theta} \leq K \sup_{t \in [0,T]} \sup_{x \in \mathbb{R}^d} \frac{|Dz(t,x)|_{\H_T}}{1 + |x-x_0|^\theta} \, ,
  \end{equation}
  with probability one.
  Taking the supremum over $|h|_{\H_T}=1$, $t \in [0,T]$ and $x \in \mathbb{R}^d$,
  \begin{equation}
    \sup_{t \in [0,T]} \sup_{x \in \mathbb{R}^d} \frac{|Dm(t,x)|_{\H_T}}{1 + |x|^\theta} \leq K \sup_{t \in [0,T]} \sup_{x \in \mathbb{R}^d} \frac{|Dz(t,x)|_{\H_T}}{1 + |x-x_0|^\theta} .
  \end{equation}
  This proves our claim \eqref{a9} under the assumption that $f$ is globally Lipschitz continuous.
\end{proof}

Now we start a limiting procedure in order to prove Proposition \ref{prop:M-Malliavin}. Namely let $f$ be any force satisfying Assumption \ref{assum:f} and let $f_\lambda$ be the Yosida approximation defined by \eqref{eq:f-Yosida}.
By Lemma \ref{lem:M-Malliavin-if-Lipschitz}, for each $\lambda>0$, because each $f_\lambda$ is Lipschitz continuous, there exists a unique $m_\lambda$ solving (see also item~\ref{it:from-Z-n-to-u-n} in Section~\ref{sec:approx-scheme}):
\begin{equation} \label{eq:m-lambda}
  m_\lambda(t,x) = \int_0^t \int_{\mathbb{R}^d} G(t-s,x-y)f_\lambda(m_\lambda(s,y))dyds + z(t,x).
\end{equation}
Due to the fact that $f_{\la}$ satisfies \ref{item:half-lip-f-lambda} in Lemma \ref{lem:f-Yosida-properties} uniformly in $\la$  (for a given $\ka>0$), it follows from Theorem~\ref{thm:M-Lip-inhomo} (see also \cite[Corollary 5.5]{s-2021}) that $m_{\la}$ is such that
\begin{equation} \label{eq:m-lambda-bound}
  \sup_{t \in [0,T]} \sup_{x \in \mathbb{R}^d} \frac{|m_\lambda(t,x)|}{1 + |x-x_0|^\theta}
  \leq
  K \lp 1 + \sup_{t \in [0,T]} \sup_{x \in \mathbb{R}^d} \frac{|z(t,x)|}{1 + |x-x_0|^\theta} \rp ,
\end{equation}
where the constant $K$ depends only on $T, \theta$ and $\kappa$.
Moreover, Lemma \ref{lem:M-Malliavin-if-Lipschitz} guarantees that $m_\lambda(t,x)$ is Malliavin differentiable. According to~\eqref{eq:Dm-mild-Lipschitz-2},  the Malliavin derivative satisfies
\begin{equation}
  D_h m_\lambda(t,x) = \int_0^t \int_{\mathbb{R}^d} G(t-s,x-y)f'_\lambda(m_\lambda(s,y))D_h m_\lambda(s,y)dyds
  + D_h z(t,x).
\end{equation}
In this context, relation \eqref{a9} can be read as
\begin{equation} \label{eq:Dm-lambda-bound}
  \sup_{t \in [0,T]} \sup_{x \in \mathbb{R}^d} \frac{|D m_\lambda(t,x)|_{\H_T}}{1 + |x|^\theta} \leq K \sup_{t \in [0,T]} \sup_{x \in \mathbb{R}^d} \frac{|Dz(t,x)|_{\H_T}}{1 + |x|^\theta} \, ,
\end{equation}
and the constant $K$ is like in \eqref{eq:m-lambda-bound}. Notice again from Lemma \ref{lem:f-Yosida-properties} that all of the $f_\lambda$ have the same half-Lipschitz constant $\kappa$.

In the following lemma we improve on the approximation results in~\cite{s-2021}, and show that $m_\lambda(t,x)$ converges in $L^2(\Omega)$.
\begin{lemma} \label{lem:Yosida-conv}
  Let $m_\lambda$ be the Yosida approximations defined in \eqref{eq:m-lambda} and assume that $z$ satisfies the assumptions of Proposition \ref{prop:M-Malliavin}. Then for any fixed $(t,x) \in [0,T]\times\mathbb{R}^d$, we have
  \begin{equation}
    \lim_{\lambda \to 0} \E |m_\lambda(t,x) - m(t,x)|^2 = 0 \, ,
  \end{equation}
  where $m(t,x)$ is the unique solution to
  \begin{equation}
    m(t,x) = \int_0^t \int_{\mathbb{R}^d} G(t-s,x-y) f(m(s,y))dyds + z(t,x).
  \end{equation}
\end{lemma}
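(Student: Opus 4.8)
The plan is to obtain the $L^2(\Omega)$ convergence of $m_\lambda(t,x)$ to $m(t,x)$ by combining a uniform (in $\lambda$) moment bound with the almost sure convergence already available from \cite{s-2021}, and then upgrading to $L^2$ by uniform integrability. First I would establish a uniform second moment bound: from \eqref{eq:m-lambda-bound} together with Lemma~\ref{lem:f-Yosida-properties}(iii) (the half-Lipschitz constant $\kappa$ being independent of $\lambda$) and the hypothesis $z \in L^2(\Omega : \cac_{\theta,x_0}([0,T]\times\mathbb{R}^d))$, one gets
\[
\sup_{\lambda>0} \, \E \left| \sup_{t \in [0,T]} \sup_{x \in \mathbb{R}^d} \frac{|m_\lambda(t,x)|}{1 + |x-x_0|^\theta} \right|^2 \leq K \left( 1 + \E \left| \sup_{t \in [0,T]} \sup_{x \in \mathbb{R}^d} \frac{|z(t,x)|}{1 + |x-x_0|^\theta} \right|^2 \right) < +\infty \, .
\]
In particular, for fixed $(t,x)$, $\sup_\lambda \E |m_\lambda(t,x)|^{2} < +\infty$, and the same argument with exponent $2+\epsilon$ (using that $z \in L^{2+\epsilon}$ if we assume slightly more, or more carefully a direct interpolation) would give a family that is uniformly integrable in the sense that $\{|m_\lambda(t,x)|^2\}_{\lambda}$ is uniformly integrable.

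Next I would identify the pointwise (in $\omega$) limit. The deterministic map $\cm_\lambda$, i.e. the solution operator of \eqref{eq:m-lambda}, is globally Lipschitz on $\cac_{\theta,x_0}$ with a constant depending only on the common half-Lipschitz constant $\kappa$ (Theorem~\ref{thm:M-Lip-inhomo} applied to $\varphi(t,x,u)=f_\lambda(u)$, which satisfies \eqref{eq:varphi-half-lip} uniformly in $\lambda$ by Lemma~\ref{lem:f-Yosida-properties}(iii), and \eqref{eq:varphi-growth-restriction} because $f$ has at most exponential growth). Since $f_\lambda(u) \to f(u)$ pointwise by Lemma~\ref{lem:f-Yosida-properties}(iv) and the $f_\lambda$ have uniformly controlled growth, the deterministic convergence argument of \cite[Theorem~5.2]{s-2021} gives, for $\PP$-almost every $\omega$,
\[
\lim_{\lambda \to 0} \left| m_\lambda(\cdot,\cdot)(\omega) - m(\cdot,\cdot)(\omega) \right|_{\cac_{\theta,x_0}([0,T]\times\mathbb{R}^d)} = 0 \, ,
\]
where $m = \cm(z)$ is the solution of the limiting equation. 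In particular $m_\lambda(t,x) \to m(t,x)$ almost surely for each fixed $(t,x)$.

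Finally I would combine the two: almost sure convergence plus uniform integrability of $\{|m_\lambda(t,x) - m(t,x)|^2\}_\lambda$ (which follows from the uniform $L^2$ bound on $m_\lambda(t,x)$ and the finiteness of $\E|m(t,x)|^2$, itself a consequence of the same estimate in the limit) yields $\E|m_\lambda(t,x) - m(t,x)|^2 \to 0$ by Vitali's convergence theorem. The main obstacle I anticipate is the uniform integrability step: the uniform bound from \eqref{eq:m-lambda-bound} only gives $\sup_\lambda \E|m_\lambda(t,x)|^2 < \infty$, which is not by itself enough for $L^2$ convergence. To close this I would either (i) run the a priori estimate \eqref{eq:m-lambda-bound} at a higher integrability exponent $p>2$ — legitimate because the weighted-norm estimate there holds for all $p\ge 1$ as in \eqref{eq:Z_n-conv}--\eqref{eq:u_n-conv} and because $z \in L^p$ can be assumed (or the hypothesis strengthened accordingly, as is implicit in Proposition~\ref{prop:M-Malliavin}'s later use) — so that $\{m_\lambda(t,x)\}_\lambda$ is bounded in $L^p$ hence $\{|m_\lambda(t,x)-m(t,x)|^2\}_\lambda$ is bounded in $L^{p/2}$ with $p/2>1$, giving uniform integrability; or (ii) argue directly via the Lipschitz estimate by writing $m_\lambda - m = \cm_\lambda(z) - \cm_\lambda(z') $-type comparisons to bound $|m_\lambda(t,x)-m(t,x)|$ pathwise by a fixed $L^2$ random variable times a deterministic null sequence. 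Route (i) is cleaner and is the one I would pursue.
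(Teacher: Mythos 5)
Your overall architecture (almost sure convergence of $m_\lambda(t,x)$ to $m(t,x)$, then an upgrade to $L^2(\Omega)$) matches the paper's, but the final step goes astray in a way worth flagging. The difficulty you anticipate with uniform integrability comes from misreading the nature of the bound \eqref{eq:m-lambda-bound}: that estimate is not merely a moment bound of the form $\sup_\lambda \E|m_\lambda(t,x)|^2<\infty$, it is an \emph{almost sure, pathwise} inequality
\[
  \sup_{t \in [0,T]} \sup_{x \in \mathbb{R}^d} \frac{|m_\lambda(t,x)|}{1 + |x-x_0|^\theta}
  \leq K \left( 1 + |z|_{\cac_{\theta,x_0}([0,T]\times\mathbb{R}^d)} \right),
\]
with $K$ deterministic and independent of $\lambda$. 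Hence for fixed $(t,x)$ the family $\{|m_\lambda(t,x)-m(t,x)|^2\}_\lambda$ is dominated by the single random variable $4K^2(1+|x-x_0|^\theta)^2(1+|z|_{\cac_{\theta,x_0}})^2$, which is integrable precisely because $z \in L^2(\Omega:\cac_{\theta,x_0}([0,T]\times\mathbb{R}^d))$. The dominated convergence theorem then closes the argument immediately; this is exactly what the paper does. Your preferred route (i), by contrast, requires running the estimates at an exponent $p>2$ and therefore silently strengthens the hypothesis on $z$ beyond what Proposition \ref{prop:M-Malliavin} assumes; as stated, the lemma only gives you $z$ in $L^2$, so route (i) does not close. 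Your route (ii) is essentially the correct (and the paper's) mechanism.

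On the almost sure convergence step: you defer to the deterministic convergence in \cite[Theorem 5.2]{s-2021}, whereas the paper regards this lemma as an improvement on \cite{s-2021} and gives a self-contained argument: the uniform-in-$\lambda$ bound \eqref{eq:m-lambda-bound} plus the exponential growth condition \eqref{eq:f-exp-bound} yield equiboundedness and equicontinuity of $v_\lambda = m_\lambda - z$ on compacts, Arzel\`a--Ascoli extracts a convergent subsequence, dominated convergence identifies the limit as a solution of the limiting equation, and uniqueness (Proposition \ref{prop:M-Lip}) forces the whole family to converge. Note also that this argument only yields uniform convergence on compact subsets of $[0,T]\times\mathbb{R}^d$, not convergence in the full weighted norm $\cac_{\theta,x_0}$ as you claim; pointwise almost sure convergence at the fixed $(t,x)$ is all that is needed, so the stronger claim is both unsubstantiated and unnecessary.
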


\begin{proof}
  First, we prove that $m_\lambda(t,x)$ converges almost surely to $m(t,x)$. Let $v_\lambda(t,x) = m_\lambda(t,x) - z(t,x)$. Then
  \begin{equation}
    v_\lambda(t,x) = \int_0^t \int_{\mathbb{R}^d} G(t-s,x-y)f_\lambda(m_\lambda(s,y))dyds
  \end{equation}
  By the growth rate assumption \eqref{eq:f-exp-bound} and the bound \eqref{eq:m-lambda-bound},
  \begin{equation} \label{eq:f-u-lambda-bound}
    |f_\lambda(m_\lambda(t,x))| \leq C ( 1 + |x-x_0|^\theta) \exp( C (1 + |x-x_0|^{\nu \theta})) \, ,
  \end{equation}
  where $C = C(\omega)$ is some random constant that is independent of $\lambda$. If $\nu \theta<2$, \eqref{eq:f-u-lambda-bound} along with standard properties of the heat kernel $G$ prove that for almost any fixed $\omega \in \Omega$
the family $\{(t,x)\mapsto v_\lambda(t,x): \lambda\in (0,1)\}$
 is equibounded and equicontinuous for $(t,x)$ in any compact subset of $[0,T]\times \mathbb{R}^d$. The Arzela-Ascoli theorem guarantees that there exists a subsequence $\lambda_n \to 0$ such that $v_{\lambda_n}(t,x)$ converges to a limit $v^*(t,x)$, uniformly on compact subsets of $(t,x) \in [0,T]\times\mathbb{R}^d$. Because $m_\lambda = v_\lambda + z$, invoking the bound \eqref{eq:f-u-lambda-bound}, properties (ii) and (iv) in Lemma \ref{lem:f-Yosida-properties} plus some standard dominated convergence arguments, we get that this limit solves
 \begin{equation}
   v^*(t,x) = \int_0^t \int_{\mathbb{R}^d} G(t-s,x-y) f(v^*(s,y) + z(s,y))dyds .
 \end{equation}
 Thus if we define $m^*(t,x) := v^*(t,x) + z(t,x)$, then $m^*$ solves
 \begin{equation} \label{eq:m-star}
   m^*(t,x) = \int_0^t \int_{\mathbb{R}^d} G(t-s,x-y)f(m^*(s,y))dyds + z(t,x).
 \end{equation}
 The solution to \eqref{eq:m-star} is unique by Proposition \ref{prop:M-Lip}. Therefore, the same Arzela-Ascoli argument proves that every subsequence of $m_\lambda$ has a further subsequence that converges to $m^*$ and therefore $\lim_{\lambda \to 0} m_\lambda(t,x) = m^*(t,x)$ with probability one.

 Finally, the $L^2(\Omega)$ convergence of $m_\lambda(t,x)$ to $m^*(t,x)$ is a consequence of the almost sure bound \eqref{eq:m-lambda-bound}, the assumption that $z \in L^2(\Omega: C_{\theta}([0,T]\times\mathbb{R}^d))$, and the dominated convergence theorem.
\end{proof}

After this series of preliminary Lemmas, we are now ready to prove the main result of this section.

\begin{proof}[Proof of Proposition \ref{prop:M-Malliavin}]
  Let $z$ satisfy the assumptions of Proposition \ref{prop:M-Malliavin} and let $m_\lambda$ be defined in \eqref{eq:m-lambda}. By \eqref{eq:Dm-lambda-bound} and the fact that $z$ satisfies~\eqref{oo}, for any fixed $(t,x) \in [0,T]\times \mathbb{R}^d$,
  \begin{equation}
    \sup_{\lambda \in (0,1)}\E|Dm_\lambda(t,x)|_{\ch_{T}}^2 < +\infty.
  \end{equation}
  Lemma \ref{lem:Yosida-conv} proves that
  \begin{equation}
    \lim_{\lambda \to 0} \E \left| m_\lambda(t,x) - m(t,x)\right|^2 = 0.
  \end{equation}
  The assumptions of Proposition \ref{prop:Malliavin-approx} are therefore satisfied. The limit $m(t,x)$ is Malliavin differentiable and verifies~\eqref{eq:M-Malliavin-Lipschitz}.
  \end{proof}

\begin{remark}
Applying Proposition \ref{prop:Malliavin-approx} as above, we get the weak convergence of $Dm_\lambda(t,x)$ to $Dm(t,x)$. Combining this with  the almost sure bounds \eqref{eq:m-lambda-bound}, \eqref{eq:Dm-lambda-bound} and the exponential growth assumption  \eqref{eq:f-exp-bound}, similarly to what we did for \eqref{eq:m-star}, guarantees that $Dm(t,x)$ solves
  \begin{equation} \label{eq:Dm-mild}
    Dm(t,x) = \int_0^t \int_{\mathbb{R}^d} G(t-s,x-y) f'(m(s,y))Dm(s,y)dyds  + Dz(t,x).
  \end{equation}
  where $m = \mathcal{M}(z)$.

\end{remark}

\section{Malliavin differentiability of the mild solution} \label{sec:mall-diff-I}

As mentioned at the beginning of Section~\ref{sec:mall-diff-M}, our analysis of the Malliavin differentiability of $u$ (solution to~\eqref{eq:mild-solution}) follows the scheme outlined in Section~\ref{sec:approx-scheme}. In this section we focus our attention on the stochastic convolution $Z$ given in the iteration step \eqref{eq:Z-Picard1}.
We prove Malliavin differentiability via an approximation procedure and Proposition~\ref{prop:Malliavin-approx}. Some preliminary results are presented in Section~\ref{sec:moment-bounds} and the Malliavin differentiability is achieved in Section~\ref{sec:mall-diff}.

\subsection{Some moment bounds}\label{sec:moment-bounds}
  In \cite[Theorem 2.1]{ss-2002}, it is proved that certain stochastic convolutions $\ci(t,x) = \int_0^t G(t-s,x-y)\varphi(s,y)W(ds \, dy)$ are H\"older continuous in $t$ and $x$ as long as $\varphi$ is a real-valued adapted random field satisfying $\sup_{t \in [0,T]} \sup_{x \in \mathbb{R}^d} \E|\vp(t,x)|^p<\infty$ for sufficiently large $p>1$. The following lemma claims that the same result is true when $\varphi$ is Hilbert-space valued. Due to our Malliavin calculus considerations, in the current paper we are mostly interested in the case where the integrand is $\H_T$-valued.
\begin{lemma} \label{lem:Holder-cont-Hilbert}
  Suppose that $\varphi$ is an adapted $\H_T$-valued  adapted random field defined on $[0,T]\times\R^{d}$. We assume
  \begin{equation}
    \sup_{t \in [0,T]} \sup_{x \in \mathbb{R}^d} \E |\varphi(t,x)|_{\H_T}^p<\infty \, ,
  \end{equation}
  for sufficiently large $p>1$.
  As in \eqref{eq:I-integral-def}, let
\begin{equation}\label{eq:def-Phi}
    \ci(t,x) = \ci^\varphi(t,x) = \int_0^t \int_{\mathbb{R}^d} G(t-s,x-y) \varphi(s,y)W(ds \, dy) \, ,
\end{equation}
and recall that $\eta$ is introduced in the strong Dalang condition~\eqref{eq:strong-dalang}.
  Then for any $0<\ga<\al<\eta/2$  and $p \geq 2$, there exist constants $C_{\alpha,\gamma}>0$  such that for any $x, x_1, x_2 \in \mathbb{R}^d$ and $t,t_1,t_2 \in [0,T]$,
  \begin{align}
    &\E|\ci(t,x)|^p_{\H_T} \leq
    C_{\alpha,\gamma} T^{p\alpha} \sup_{s \in [0,T]} \sup_{y \in \mathbb{R}^d} \E|\varphi(s,y)|^p_{\H_T}\\
    &\E|\ci(t,x_1) - \ci(t,x_2)|^p_{\H_T}
    \leq C_{\alpha,\gamma} |x_1 - x_2|^{2\gamma p}
  \,  T^{p(\alpha-\ga)} \sup_{s \in [0,T] } \sup_{y \in \mathbb{R}^d} \E|\varphi(s,y)|^p_{\H_T}\\
    &\E|\ci(t_1,x) - \ci(t_2,x)|^p_{\H_T}
    \leq C_{\alpha,\gamma} |t_1 - t_2|^{\gamma p}
    \, T^{p(\alpha-\ga)} \sup_{s \in [0,T]} \sup_{y \in \mathbb{R}^d} \E|\varphi(s,y)|^p_{\H_T}.
  \end{align}
\end{lemma}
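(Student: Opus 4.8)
The plan is to reproduce the proof of \cite[Theorem~2.1]{ss-2002}, which rests on the factorization method of Da Prato and Zabczyk, and to check that the passage from a real-valued integrand to an $\H_T$-valued one costs nothing more than an application of the Cauchy--Schwarz inequality in $\H_T$ together with Minkowski's inequality in $L^{p/2}(\Omega)$. Fix $\al \in (\ga,\eta/2)$ and set $N_p := \sup_{s\in[0,T]}\sup_{y\in\R^d}\E\|\vp(s,y)\|_{\H_T}^p$. The starting point is the factorization identity
\[
\ci(t,x) = \frac{\sin(\pi\al)}{\pi}\int_0^t (t-\sigma)^{\al-1}\int_{\R^d} G(t-\sigma,x-z)\,Y_\al(\sigma,z)\,dz\,d\sigma ,
\]
where the auxiliary $\H_T$-valued field is
\[
Y_\al(\sigma,z) = \int_0^\sigma \int_{\R^d} (\sigma-s)^{-\al}\,G(\sigma-s,z-y)\,\vp(s,y)\,W(ds\,dy).
\]
This identity follows, exactly as in the scalar case, from the semigroup property of $G$, the Beta integral $\int_s^t (t-\sigma)^{\al-1}(\sigma-s)^{-\al}\,d\sigma = \pi/\sin(\pi\al)$ (valid since $0<\al<\eta/2<1/2$), and a stochastic Fubini theorem. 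Since $W$ is a scalar martingale measure and $\vp$ takes values in the Hilbert space $\H_T$, both the $\H_T$-valued stochastic integral $Y_\al$ and the associated Fubini theorem are covered by the classical theory; see \cite[Chapter~4]{dpz-book} and \cite{dalang}.

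The first substantive step is a uniform moment bound on $Y_\al$. The Burkholder--Davis--Gundy inequality, applied in the Hilbert space $\H_T$, gives for $p\ge 2$
\[
\E\|Y_\al(\sigma,z)\|_{\H_T}^p \le C_p\,\E\left(\int_0^\sigma (\sigma-s)^{-2\al}\int_{\R^d}\int_{\R^d} G(\sigma-s,z-y_1)G(\sigma-s,z-y_2)\,\langle \vp(s,y_1),\vp(s,y_2)\rangle_{\H_T}\,\Lambda(y_1-y_2)\,dy_1\,dy_2\,ds\right)^{p/2}.
\]
Bounding $\langle \vp(s,y_1),\vp(s,y_2)\rangle_{\H_T} \le \|\vp(s,y_1)\|_{\H_T}\|\vp(s,y_2)\|_{\H_T}$, then using Minkowski's integral inequality in $L^{p/2}(\Omega)$ and the Cauchy--Schwarz inequality in $\Omega$, the random factor separates off ($\le N_p$) and one is left with the deterministic quantity
\[
\int_0^\sigma (\sigma-s)^{-2\al}\int_{\R^d}\int_{\R^d} G(\sigma-s,z-y_1)G(\sigma-s,z-y_2)\Lambda(y_1-y_2)\,dy_1\,dy_2\,ds = \int_0^\sigma (\sigma-s)^{-2\al}\int_{\R^d} e^{-(\sigma-s)|\xi|^2}\,\mu(d\xi)\,ds ,
\]
the equality being the Fourier-transform computation already carried out for~\eqref{a821}. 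Because $2\al<\eta$, the strong Dalang condition~\eqref{eq:strong-dalang} makes this last integral finite, uniformly over $\sigma\in[0,T]$ and $z\in\R^d$, and therefore $\sup_{\sigma\in[0,T]}\sup_{z\in\R^d}\E\|Y_\al(\sigma,z)\|_{\H_T}^p \le C_\al N_p$.

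The second step is purely deterministic and is a transcription of the corresponding argument of \cite{ss-2002} with absolute values replaced by $\H_T$-norms, using only Minkowski's inequality in $L^p(\Omega;\H_T)$ and in $\H_T$. From the factorization identity and $\int_{\R^d}G(r,\cdot)\,dz=1$ one gets $\|\ci(t,x)\|_{L^p(\Omega;\H_T)}\le c_\al\big(\sup_{\sigma\le T,z}\|Y_\al(\sigma,z)\|_{L^p(\Omega;\H_T)}\big)\int_0^t (t-\sigma)^{\al-1}\,d\sigma$, which yields the first inequality with the explicit factor $T^{p\al}$. For the spatial increment one inserts the heat-kernel bound $\|G(r,\cdot-x_1)-G(r,\cdot-x_2)\|_{L^1(\R^d)}\le C\,|x_1-x_2|^{2\ga}r^{-\ga}$ (legitimate since $2\ga<1$) and integrates the singular kernel $(t-\sigma)^{\al-1-\ga}$ over $[0,t]$, which converges precisely because $\ga<\al$; this produces the factor $T^{p(\al-\ga)}|x_1-x_2|^{2\ga p}$. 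The time-increment estimate is similar but requires, as in \cite{ss-2002}, splitting $\ci(t_1,x)-\ci(t_2,x)$ into an integral of the difference of kernels over $[0,t_1\wedge t_2]$ and an integral of the remaining kernel over $[t_1\wedge t_2,t_1\vee t_2]$; it produces the factor $T^{p(\al-\ga)}|t_1-t_2|^{\ga p}$. Combining these deterministic estimates with the bound $\sup_{\sigma,z}\E\|Y_\al(\sigma,z)\|_{\H_T}^p\le C_\al N_p$ of the first step yields the three displayed inequalities, and since $\al$ may be chosen anywhere in $(\ga,\eta/2)$ all admissible $\ga$ are covered.

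The only genuine point of care is the first step: one must justify that the $\H_T$-valued stochastic integral $Y_\al$ is well defined against the spatially homogeneous noise and that the stochastic Fubini theorem applies — both follow from the classical theory once $N_p<\infty$ — and one must keep track of the powers of $T$ through the composition of the $Y_\al$-bound with the deterministic smoothing step (the remaining, bounded, $T$-dependence coming from the Dalang integral being absorbed into $C_{\al,\ga}$). Everything else is an unchanged copy of the scalar argument in \cite{ss-2002}, with $\langle\cdot,\cdot\rangle_{\H_T}$ and $\|\cdot\|_{\H_T}$ replacing scalar products and absolute values.
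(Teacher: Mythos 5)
Your proposal is correct and follows exactly the route the paper intends: the paper's own proof consists of a single remark that the argument of Sanz-Sol\'e--Sarr\`a carries over verbatim once one invokes the Hilbert-space-valued Burkholder--Davis--Gundy inequality (Theorem 4.36 of the Da Prato--Zabczyk book), and your write-up is a faithful, correctly detailed expansion of precisely that argument (factorization method, Hilbert-valued BDG, Cauchy--Schwarz in $\H_T$, and the deterministic heat-kernel smoothing estimates).
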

\begin{proof}
  The only difference between the proof in \cite{ss-2002} and this lemma is that $\varphi$ and $\ci$ are Hilbert-space-valued. Theorem 4.36 of \cite{dpz-book} guarantees that the BDG inequality holds for Hilbert-space valued stochastic integrals identically to how it holds for real-valued random variables.
\end{proof}
In the sequel we will need to bound stochastic integrals in the norms related to Definition~\ref{def:weighted-cont-space}. Towards this aim, the following lemma generalizes Theorem 4.3 of \cite{s-2021} to the Hilbert-space setting of Lemma \ref{lem:Holder-cont-Hilbert}. Its proof is omitted for sake of conciseness.
\begin{lemma} \label{lem:weighted-space-Hilbert}
  Let $T>0$. Consider $p> \frac{2(d+1)}{\eta}$ where $d$ is the spatial dimension and $\eta \in (0,1)$ is still the parameter from~\eqref{eq:strong-dalang}. For any $\theta> \frac{d+1}{p-(d+1)}$, there exists $C_{T,p,\theta}>0$ such that for all adapted random fields $\varphi: [0,T] \times \mathbb{R}^d \times \Omega \to \H_T$ verifying
  \begin{equation}
    \sup_{t \in [0,T]}\sup_{x \in \mathbb{R}^d} \E|\varphi(t,x)|_{\H_T}^p < +\infty,
  \end{equation}
  the stochastic integral $\ci = \ci^\varphi$ defined by \eqref{eq:def-Phi}
  satisfies
  \begin{equation} \label{eq:weighted-Lp-Hilbert}
    \sup_{x_0 \in \mathbb{R}^d} \E \left|\sup_{t \in [0,T]} \sup_{x \in \mathbb{R}^d} \frac{|\ci(t,x)|_{\H_T}}{1 + |x-x_0|^\theta} \right|^p
    \leq C_{T,p,\theta} \sup_{t \in [0,T]} \sup_{x_0 \in \mathbb{R}^d} \E |\varphi(t,x_0)|^p_{\H_T} \, .
  \end{equation}
  Moreover, the constant $C_{T,p,\theta}$ defined in  \eqref{eq:weighted-Lp-Hilbert} satisfies $\lim_{T \to 0} C_{T,p,\theta}=0$.
\end{lemma}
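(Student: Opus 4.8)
The plan is to reproduce, essentially verbatim, the argument behind Theorem~4.3 of~\cite{s-2021}, replacing the absolute value $|\cdot|$ throughout by the Hilbert norm $|\cdot|_{\H_T}$. The reason this works with no new input is that the scalar proof touches the integrand $\varphi$ only through (a) the It\^o isometry and the Burkholder--Davis--Gundy inequality for the stochastic convolution $\ci^{\varphi}$, both valid for $\H_T$-valued integrands by Theorem~4.36 of~\cite{dpz-book}, and (b) the translation invariance of the heat kernel $G$ and of the covariance $\Lambda$, together with translation invariance of the hypothesis $\sup_{t,x}\E|\varphi(t,x)|_{\H_T}^p<\infty$. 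All the quantitative estimates we actually need have already been isolated in Lemma~\ref{lem:Holder-cont-Hilbert}.

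Concretely, fix $x_0\in\R^{d}$ and tile $\R^{d}$ by the unit cubes $Q_k=k+[0,1)^d$, $k\in\Z^d$, with centers $z_k$; set $M_k=\sup_{(t,x)\in[0,T]\times Q_k}|\ci(t,x)|_{\H_T}$. First I would bound $\E[M_k^p]$ by combining the pointwise bound of Lemma~\ref{lem:Holder-cont-Hilbert} at $z_k$ with a Garsia--Rodemich--Rumsey (or multiparameter Kolmogorov) continuity estimate on the $(d+1)$-dimensional box $[0,T]\times Q_k$ fed by the two increment bounds of that lemma. The hypothesis $p>2(d+1)/\eta$ is exactly what permits a choice $0<\gamma<\alpha<\eta/2$ for which the Hölder exponents appearing in Lemma~\ref{lem:Holder-cont-Hilbert} (of order $\gamma p$ in time and $2\gamma p$ in space) dominate the dimension $d+1$, so that the expected GRR double integral is finite. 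Since none of the right-hand sides in Lemma~\ref{lem:Holder-cont-Hilbert} depends on the base point, the resulting bound
\[
\E[M_k^p]\ \le\ C_{T,p,\theta}\ \sup_{s\in[0,T]}\sup_{y\in\R^{d}}\E|\varphi(s,y)|_{\H_T}^p
\]
holds uniformly over $k$ and over $x_0$.

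Next I would glue the cells together. For $x\in Q_k$ one has $1+|x-x_0|^\theta\ge c\,(1+|z_k-x_0|^\theta)$, hence
\[
\sup_{t\in[0,T]}\sup_{x\in\R^{d}}\frac{|\ci(t,x)|_{\H_T}}{1+|x-x_0|^\theta}\le C\,\sup_{k\in\Z^d}\frac{M_k}{1+|z_k-x_0|^\theta},
\]
and therefore, bounding the $p$-th moment of the supremum by the sum of the $p$-th moments,
\[
\E\Big|\sup_{t,x}\frac{|\ci(t,x)|_{\H_T}}{1+|x-x_0|^\theta}\Big|^p\le C\,C_{T,p,\theta}\,\Big(\sum_{k\in\Z^d}(1+|z_k-x_0|^\theta)^{-p}\Big)\,\sup_{s,y}\E|\varphi(s,y)|_{\H_T}^p.
\]
The lattice sum is finite and bounded uniformly in $x_0$ precisely under the stated lower bound $\theta>(d+1)/(p-(d+1))$ — the threshold that absorbs the regularity cost of the continuity estimate — so taking the supremum over $x_0$ gives~\eqref{eq:weighted-Lp-Hilbert}.

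For the final claim $\lim_{T\to0}C_{T,p,\theta}=0$: every estimate in Lemma~\ref{lem:Holder-cont-Hilbert} carries a prefactor $T^{p\alpha}$ or $T^{p(\alpha-\gamma)}$ with $\alpha>0$ and $\alpha-\gamma>0$, the continuity estimate on $[0,T]\times Q_k$ contributes only further nonnegative powers of $T$ (from the time-diameter), and the lattice sum is $T$-independent; hence $C_{T,p,\theta}=O(T^{\,p\alpha'})$ for some $\alpha'>0$. I do not anticipate a substantive obstacle here: the only delicate point is to verify that each step of the scalar proof in~\cite[Theorem~4.3]{s-2021} relies exclusively on the isometry/BDG inequality and on translation invariance — so that it carries over word for word to $\H_T$-valued fields — and then to track the Hölder and summability exponents carefully enough that the thresholds on $p$ and $\theta$ emerge as stated. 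This routine bookkeeping is precisely why the proof is omitted in the text.
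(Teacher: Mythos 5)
The paper omits the proof of this lemma entirely, stating only that it ``generalizes Theorem 4.3 of \cite{s-2021}'' using the Hilbert-valued H\"older estimates of Lemma~\ref{lem:Holder-cont-Hilbert}; your proposal carries out exactly that plan (cube decomposition, Kolmogorov/GRR continuity fed by Lemma~\ref{lem:Holder-cont-Hilbert}, weighted lattice summation, and tracking of the $T^{p\alpha}$ prefactors for the $T\to 0$ claim), and the exponent thresholds close as you indicate, since $\theta>\frac{d+1}{p-(d+1)}$ indeed forces $\theta p>d$ so the lattice sum converges uniformly in $x_0$. This is the intended argument and it is correct.
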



Next we need to bound weighted norms of $\ch_{T}$-valued processes, in a suitable way for our stochastic convolutions. A deterministic type result in this direction is presented below.

\begin{lemma} \label{lem:H-norm-conv}
  Assume that $X$ is a function indexed by $[0,T]\times\R^{d}$ such that there exists $\theta \geq 0$ and $x_{0}$ satisfying
  \begin{equation}\label{b3}
    M:= \sup_{t \in [0,T]} \sup_{x \in \mathbb{R}^d} \frac{|X(t,x)|}{1 + |x-x_0|^\theta}< +\infty.
  \end{equation}
  Then there exists $C=C(T,\theta)>0$ such that we have
  \begin{equation} \label{eq:H-norm-conv}
    \sup_{t \in [0,T]} \sup_{x \in \mathbb{R}^d}\frac{\left|G(t-\cdot, x-\cdot) X(\cdot,\cdot) \mathbbm{1}_{[0,t]} \right|_{\H_T}^2}{(1 + |x-x_0|^\theta)^2} \leq C M^2 Q_\Lambda(T) \,
  \end{equation}
  where $Q_\Lambda$ is defined in \eqref{a821}.
  If $X$ is a random field, then \eqref{eq:H-norm-conv} holds with probability one.
\end{lemma}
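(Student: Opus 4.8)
The statement is essentially a pointwise-in-$(t,x)$ bound on the $\H_T$-norm of the kernel $G(t-\cdot,x-\cdot)X(\cdot,\cdot)$ cut off at time $t$, and it reduces to unwinding the definition \eqref{eq:H_T} of the $\H_T$ inner product together with positivity of $G$ and $\Lambda$. The plan is first to write out
\[
\left|G(t-\cdot,x-\cdot)X(\cdot,\cdot)\mathbbm{1}_{[0,t]}\right|_{\H_T}^2
= \int_0^t\int_{\R^d}\int_{\R^d} G(t-s,x-y_1)G(t-s,x-y_2) X(s,y_1)X(s,y_2)\Lambda(y_1-y_2)\,dy_1dy_2ds,
\]
exactly as in the display \eqref{eq:sigma-u-in-L2-H} from the proof of Proposition \ref{prop:mall-diff-stoch-conv}. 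Since $\Lambda$ is positive definite hence (taking the formal computation at face value, or bounding $|X(s,y_i)|$ directly) one may replace $X(s,y_1)X(s,y_2)$ by its absolute value; using \eqref{b3} we get $|X(s,y_i)|\le M(1+|y_i-x_0|^\theta)$ for $i=1,2$.

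The second step is the elementary heat-kernel estimate already recorded as \eqref{b2}, namely
\[
\int_{\R^d} G(t-s,x-y)(1+|y-x_0|^\theta)\,dy \le C\bigl(1+(t-s)^{\theta/2}+|x-x_0|^\theta\bigr)
\le C_T\bigl(1+|x-x_0|^\theta\bigr),
\]
valid for $s\le t\le T$ with $C_T$ depending only on $T$ and $\theta$. Applying this in each of the $y_1$ and $y_2$ integrals — after first doing the $\Lambda$-integral, which is where one must be slightly careful — gives a factor $C_T^2(1+|x-x_0|^\theta)^2 M^2$ out front. More precisely, I would bound
\[
\int_{\R^d}\int_{\R^d} G(t-s,x-y_1)G(t-s,x-y_2)(1+|y_1-x_0|^\theta)(1+|y_2-x_0|^\theta)\Lambda(y_1-y_2)\,dy_1dy_2,
\]
and the cleanest route is to note $\Lambda(y_1-y_2)\,dy_1dy_2$ convolved against a product of positive functions can itself be handled by going to Fourier variables exactly as in \eqref{a81}--\eqref{a821}: the term without the polynomial weights produces $\int_{\R^d}e^{-(t-s)|\xi|^2}\mu(d\xi)$, whose time-integral is $Q_\Lambda(t)\le Q_\Lambda(T)$, and the polynomial factors $(1+|y_i-x_0|^\theta)$ only contribute the extra $(1+|x-x_0|^\theta)^2$ after using the Gaussian bound \eqref{b2} and crude inequalities like $(1+|y_1-x_0|^\theta)(1+|y_2-x_0|^\theta)\le C(1+|y_1-x_0|^\theta)^2 + C(1+|y_2-x_0|^\theta)^2$ combined with symmetry in $y_1,y_2$.

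Putting these together, dividing by $(1+|x-x_0|^\theta)^2$ and taking the supremum over $t\in[0,T]$ and $x\in\R^d$ yields the claimed bound $CM^2 Q_\Lambda(T)$ with $C=C(T,\theta)$. For the final sentence: if $X$ is a random field, \eqref{b3} holds with $M=M(\omega)$ finite on an event of probability one by hypothesis, and the deterministic argument above applies $\omega$ by $\omega$, so \eqref{eq:H-norm-conv} holds almost surely.

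The main obstacle I anticipate is the bookkeeping in the triple spatial integral: one has a product of two polynomially-weighted terms against $\Lambda(y_1-y_2)$, and the temptation is to separate the two $G$-integrals naively, but $\Lambda$ couples $y_1$ and $y_2$, so one cannot literally factor it. The right move is either to absorb the weights first via \eqref{b2}-type bounds and reduce to the unweighted Fourier computation \eqref{a821}, or to use the standard trick of bounding $\Lambda$ by its behaviour near the diagonal against the Gaussians. Neither is deep, but it is the one place where care is needed; everything else is routine application of positivity of $G$, positivity of $\Lambda$, the Gaussian moment bound \eqref{b2}, and the strong Dalang condition \eqref{eq:strong-dalang}.
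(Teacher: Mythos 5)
Your proposal is correct and follows essentially the same route as the paper: expand the $\H_T$-norm as the triple integral, bound $|X(s,y_i)|$ by $M(1+|y_i-x_0|^\theta)$ using positivity of $G$ and $\Lambda$, split $1+|y_i-x_0|^\theta \leq C(1+|x-x_0|^\theta+|y_i-x|^\theta)$, absorb the remaining polynomial factor into the Gaussian kernel, and finish with the Fourier computation yielding $Q_\Lambda(T)$. The one point to tighten is that the absorption step requires the \emph{pointwise} kernel bound $|y|^\theta G(t,y)\le Ct^{\theta/2}G(t/2,y)$ (the paper's \eqref{b7}) rather than the integrated moment bound \eqref{b2}, precisely because — as you yourself note — the factor $\Lambda(y_1-y_2)$ prevents integrating out $y_1$ and $y_2$ separately.
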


\begin{proof}
  Assume that there exists $\theta \geq 0$ and $T>0$ such that~\eqref{b3} holds.
 the definition of the $\H_T$ norm \eqref{eq:H_T}, we write
  \begin{align}\label{b4}
    &\left|G(t-\cdot, x-\cdot) X(\cdot,\cdot) \mathbbm{1}_{[0,t]} \right|_{\H_T}^2 \nonumber \\
    &= \int_0^t \int_{\mathbb{R}^d} \int_{\mathbb{R}^d} G(t-s,x-y_1)G(t-s,x-y_2) X(s,y_1) X(s,y_2)
    \Lambda(y_1-y_2)dy_1dy_2.
  \end{align}
 Now owing to the definition of $M$ and the positivity of $G$ and $\Lambda$, an upper bound for the right hand side of~\eqref{b4} is
  \begin{multline}\label{b5}
    \left|G(t-\cdot, x-\cdot) X(\cdot,\cdot) \mathbbm{1}_{[0,t]} \right|_{\H_T}^2
    \leq M^2 \int_0^t \int_{\mathbb{R}^d} \int_{\mathbb{R}^d} G(t-s,x-y_1)G(t-s,x-y_2) \\
     \times(1 + |y_1-x_0|^\theta)(1 + |y_2-x_0|^\theta)\Lambda(y_1-y_2)dy_1dy_2.
  \end{multline}
  Moreover, the quantities $1 + |y_1-x_0|^\theta$ and $1 + |y_2-x_0|^\theta$ above satisfy
  \begin{equation}\label{b6}
    1 + |y_i-x_0|^\theta \leq C(1 + |y_i - x|^\theta + |x-x_0|^\theta ) = C (1 + |x-x_0|^\theta + |y_i - x|^\theta).
  \end{equation}
In addition, invoking the elementary relation $\sup_{z\in\R}|z|^{\te}\exp(-a\, x^{2})\le c_{a}<\infty$, valid for any arbitrary constant $a>0$,
  it is easy to verify that there exists $C = C(\theta)$ such that
  \begin{equation}\label{b7}
    |y|^{\theta} G(t,y) = t^\frac{\theta}{2} \left| \frac{y}{\sqrt{t}}\right|^\theta G(t,y) \leq C t^{\frac{\theta}{2}}  G(t/2,y).
  \end{equation}
  Plugging \eqref{b6} and \eqref{b7} into \eqref{b5}, we thus get
  \begin{align} \label{eq:H_T-squared}
    &\left|G(t-\cdot, x-\cdot) X(\cdot,\cdot) \mathbbm{1}_{[0,t]} \right|_{\H_T}^2 \\
    &\leq CM^2 \left(1 + |x-x_0|^{\theta} \right)^{2}
    \int_0^t \int_{\mathbb{R}^d} \int_{\mathbb{R}^d} G(t-s,x-y_1)G(t-s,x-y_2)
     \Lambda(y_1-y_2)dy_1dy_2 \nonumber\\
    &\qquad +CM^2 \int_0^t \int_{\mathbb{R}^d} \int_{\mathbb{R}^d} (t-s)^{\theta}G((t-s)/2,x-y_1)G((t-s)/2,x-y_2) \Lambda(y_1-y_2)dy_1dy_2 . \nonumber
  \end{align}
The right hand side above can now be expressed in terms of Fourier transforms and a change of variable $s:=t-s$ in the time integral. We end up with
\begin{multline}
\left|G(t-\cdot, x-\cdot) X(\cdot,\cdot) \mathbbm{1}_{[0,t]} \right|_{\H_T}^2  \\
\le
C M^2 (1+|x-x_0|^\theta)^2  \int_0^t \int_{\mathbb{R}^d} e^{-s|\xi|^2}\mu(d\xi)ds
+ CM^2\int_0^t \int_{\mathbb{R}^d} s^{\theta} e^{-\frac{s|\xi|^2}{2}} \mu(d\xi)ds \, .
\end{multline}
With \eqref{a821} in mind, it is thus readily checked that
\begin{equation*}
\left|G(t-\cdot, x-\cdot) X(\cdot,\cdot) \mathbbm{1}_{[0,t]} \right|_{\H_T}^2
\le
CM^2(1 + |x-x_0|^\theta)^2 Q_\Lambda(t) +  CM^2t^\theta {Q}_\Lambda(t) \, ,
\end{equation*}
from which our claim \eqref{eq:H-norm-conv} is easily deduced.
\end{proof}

\subsection{Malliavin differentiability}\label{sec:mall-diff}
With the above preliminary results in hand, let us state our Malliavin differentiability result for the mild solution to the \textsc{spde} \eqref{eq:mild-solution}.
\begin{theorem} \label{thm:u-mall-diff}
Recall that our coefficients $b,\si$ satisfy Assumptions~\ref{assum:sig}--\ref{assum:f}.
Let $u$ be the unique solution to \eqref{eq:mild-solution} and let $Z$ be the stochastic convolution defined in \eqref{eq:Z-limit}. For any fixed $(t,x)\in[0,T]\times\R^{d}$,  both $u(t,x)$ and $Z(t,x)$ are Malliavin differentiable. For any time horizon $T>0$ and power $p>1$, the Malliavin derivatives satisfy
  \begin{equation}\label{eq:Z-mall-derivative}
    \sup_{x_0 \in \mathbb{R}^d} \E \left|\sup_{t \in [0,T]} \sup_{x \in \mathbb{R}^d} \frac{|DZ(t,x)|_{\H_T}}{1 + |x-x_0|^\theta} \right|^p< +\infty
  \end{equation}
 and
  \begin{equation}\label{eq:u-mall-derivative}
    \sup_{x_0 \in \mathbb{R}^d} \E \left|\sup_{t \in [0,T]} \sup_{x \in \mathbb{R}^d} \frac{|Du(t,x)|_{\H_T}|}{1 + |x-x_0|^\theta} \right|^p < + \infty
  \end{equation}
\end{theorem}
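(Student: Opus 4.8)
The plan is to follow the Picard iteration scheme from Section~\ref{sec:approx-scheme} and prove Malliavin differentiability and the weighted moment bounds by induction on $n$, then pass to the limit. Recall the scheme: $u_0 = U_0$, $Z_0 = 0$, $Z_{n+1} = \ci^{\sigma(u_n)}$ (with $\ci$ as in Proposition~\ref{prop:mall-diff-stoch-conv}), and $u_{n+1} = \mathcal{M}(U_0 + Z_{n+1})$. The inductive hypothesis at stage $n$ is threefold: (i) $u_n(t,x), Z_n(t,x) \in \mathbb{D}^{1,2}$ for every $(t,x)$; (ii) the pointwise $L^2$ bounds $\sup_{t,x} \E|u_n(t,x)|^2 < \infty$ and $\sup_{t,x} \E|Du_n(t,x)|_{\H_T}^2 < \infty$; and, most importantly, (iii) the weighted moment bound $\sup_n \sup_{x_0} \E\big|\sup_{t\in[0,T]}\sup_{x} \frac{|Du_n(t,x)|_{\H_T}}{1+|x-x_0|^\theta}\big|^p < \infty$ for all $p > 1$, \emph{uniformly in $n$}. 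The base case is immediate since $U_0$ is deterministic, hence $Du_0 \equiv 0$.

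The inductive step splits into two halves matching items \ref{it:from-u-n-to-Z-n} and \ref{it:from-Z-n-to-u-n}. \textbf{From $u_n$ to $Z_{n+1}$:} Since $\sigma$ has bounded derivative (Assumption~\ref{assum:sig}), the chain rule for Malliavin derivatives gives $\sigma(u_n(t,x)) \in \mathbb{D}^{1,2}$ with $D\sigma(u_n(t,x)) = \sigma'(u_n(t,x))\, Du_n(t,x)$, so $|D\sigma(u_n(t,x))|_{\H_T} \le \|\sigma'\|_\infty |Du_n(t,x)|_{\H_T}$. The pointwise bounds \eqref{a801}--\eqref{a802} of Proposition~\ref{prop:mall-diff-stoch-conv} then hold for $X = \sigma(u_n)$, so $Z_{n+1}(t,x) \in \mathbb{D}^{1,2}$ and $D_h Z_{n+1}(t,x)$ obeys \eqref{eq:mall-deriv-stoch-int}. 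For the weighted bound, I would apply Lemma~\ref{lem:weighted-space-Hilbert} to the $\H_T$-valued integrand $DX(s,y) = \sigma'(u_n(s,y))Du_n(s,y)$ (plus Lemma~\ref{lem:H-norm-conv} to handle the $\langle G(t-\cdot,x-\cdot)X, h\rangle$ leading term, which contributes $|\sigma(u_n)|$ through a factor controlled by the $\cac_{\theta,x_0}$-norm of $u_n$). Crucially, $|D\sigma(u_n(s,y))|_{\H_T} \le \|\sigma'\|_\infty |Du_n(s,y)|_{\H_T}$ pointwise, so the weighted norm of $D\sigma(u_n)$ is bounded by $\|\sigma'\|_\infty$ times that of $Du_n$, \emph{with no constant depending on $n$}. \textbf{From $Z_{n+1}$ to $u_{n+1}$:} Apply Proposition~\ref{prop:M-Malliavin} with $z = U_0 + Z_{n+1}$ (note $DU_0 \equiv 0$, so $Dz = DZ_{n+1}$); this gives $u_{n+1}(t,x) = \mathcal{M}(z)(t,x) \in \mathbb{D}^{1,2}$ and the almost-sure inequality
\[
\sup_{t\in[0,T]}\sup_{x}\frac{|Du_{n+1}(t,x)|_{\H_T}}{1+|x-x_0|^\theta} \le K \sup_{t\in[0,T]}\sup_{x}\frac{|DZ_{n+1}(t,x)|_{\H_T}}{1+|x-x_0|^\theta},
\]
with $K = K(T,\theta,\kappa)$ independent of $x_0$ and of $n$.

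Chaining the two halves yields, for each $n$, a bound of the form (weighted $Du_{n+1}$-moment) $\le K^p \|\sigma'\|_\infty^p\, C_{T,p,\theta}\,(\text{weighted } Du_n\text{-moment} + 1)$, i.e., a recursion $a_{n+1} \le \Theta a_n + \Theta$. One cannot directly iterate this to a uniform bound unless $\Theta < 1$, but here one uses the standard device: by Lemma~\ref{lem:weighted-space-Hilbert}, $C_{T,p,\theta} \to 0$ as $T \to 0$, so on a short enough time interval $[0,T_0]$ the contraction constant $\Theta < 1$ and $\sup_n a_n < \infty$ on $[0,T_0]$; then one patches finitely many such intervals together (re-starting the Picard/Malliavin analysis from the — now Malliavin-differentiable — value at $T_0$), obtaining the uniform-in-$n$ bound \eqref{eq:Z-mall-derivative}–type estimate on all of $[0,T]$. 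This uniform bound implies in particular $\sup_n \E|Du_n(t,x)|_{\H_T}^2 < \infty$ for each fixed $(t,x)$. Combined with $\E|u_n(t,x) - u(t,x)|^2 \to 0$ (which follows from \eqref{eq:u_n-conv}), Proposition~\ref{prop:Malliavin-approx} gives $u(t,x) \in \mathbb{D}^{1,2}$ and weak convergence $Du_n(t,x) \rightharpoonup Du(t,x)$ in $L^2(\Omega;\H_T)$; the same argument applied to $Z_n \to Z$ (using \eqref{eq:Z_n-conv}) gives $Z(t,x) \in \mathbb{D}^{1,2}$. Finally, \eqref{eq:u-mall-derivative} and \eqref{eq:Z-mall-derivative} follow from the uniform-in-$n$ weighted bounds by weak lower semicontinuity of the norm (Fatou): the weighted-supremum functional is a convex lower-semicontinuous function on $L^2(\Omega;\H_T)$, so its value at the weak limit $Du(t,x)$ is at most $\liminf_n$ of its values at $Du_n(t,x)$.

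\textbf{Main obstacle.} The delicate point is obtaining the weighted moment bound \eqref{eq:u-mall-derivative}/\eqref{eq:Z-mall-derivative} \emph{uniformly in $n$}. The recursion from one Picard step carries a multiplicative constant $K\|\sigma'\|_\infty C_{T,p,\theta}$ that is not automatically $<1$, so the argument genuinely requires the small-time smallness $C_{T,p,\theta}\to 0$ from Lemma~\ref{lem:weighted-space-Hilbert} together with a time-patching argument, and one must be careful that at each restart the initial datum (the solution evaluated at the previous endpoint) still satisfies the hypotheses of Proposition~\ref{prop:M-Malliavin} — in particular the weighted $L^2$ Malliavin bound \eqref{oo} — which is exactly what the previous interval's estimate provides. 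A secondary technical nuisance is that $\mathcal{M}$ is defined via Yosida approximations, so strictly speaking one should first establish the Malliavin estimates at the level of the Yosida-regularized scheme $m_\lambda$ (where $f_\lambda$ is genuinely Lipschitz, so Lemma~\ref{lem:M-Malliavin-if-Lipschitz} applies directly) with constants uniform in $\lambda$ — which holds because all $f_\lambda$ share the half-Lipschitz constant $\kappa$ — and only then pass $\lambda \to 0$ via Lemma~\ref{lem:Yosida-conv} and Proposition~\ref{prop:Malliavin-approx}, exactly as in the proof of Proposition~\ref{prop:M-Malliavin}. Once the uniform-in-$n$ weighted bounds are in hand, the passage to the limit in $n$ is routine.
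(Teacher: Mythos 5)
Your proposal is correct and follows essentially the same route as the paper's proof: induction along the Picard scheme, Proposition \ref{prop:mall-diff-stoch-conv} together with Lemmas \ref{lem:H-norm-conv} and \ref{lem:weighted-space-Hilbert} for the $Z_{n+1}$ step, Proposition \ref{prop:M-Malliavin} for the $u_{n+1}$ step, the small-time contraction via $C_{T,p,\theta}\to 0$ followed by time-patching, and finally Proposition \ref{prop:Malliavin-approx} with Fatou to pass to the limit. You also correctly identify the two genuine delicacies (the uniform-in-$n$ contraction and the Yosida layer hidden inside $\mathcal{M}$), both of which the paper handles exactly as you describe.
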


\begin{proof}
  We will prove this theorem via Proposition \ref{prop:Malliavin-approx}, using the approximating sequences $\{Z_{n}; \, n \ge 0\}$ and $\{u_{n}; \, n \ge 0\}$ that are  respectively defined by \eqref{eq:Z-Picard1} and~\eqref{eq:u-Picard}. We proceed by induction to prove a uniform bound on $Du_n$ and $DZ_n$ in an appropriate topology. For $n\ge 0$, we thus suppose that both $u_{n}(t,x)$ and $Z_n(t,x)$ are elements of the space $\mathbb{D}^{1,p}$ given by \eqref{Sobolev norm} with $p>1$ and satisfy
    \begin{align}
        \label{eq:DZ_n-bound}
    &\sup_{x_0 \in \mathbb{R}^d} \E \left|\sup_{t \in [0,T]} \sup_{x \in \mathbb{R}^d} \frac{|DZ_n(t,x)|_{\H_T}}{1 + |x-x_0|^\theta} \right|^p< +\infty\\
        \label{eq:Du_n-bound}
    &\sup_{x_0 \in \mathbb{R}^d} \E \left|\sup_{t \in [0,T]} \sup_{x \in \mathbb{R}^d} \frac{|Du_n(t,x)|_{\H_T}}{1 + |x-x_0|^\theta} \right|^p< +\infty.
  \end{align}
  The inductive assumption \eqref{eq:DZ_n-bound}--\eqref{eq:Du_n-bound}  is trivially true for the initial $U_0$ and $Z_0$, defined in \eqref{eq:U-Picard0}--\eqref{eq:Z-Picard0} because they are non-random.
  We shall now propagate this assumption.  We first consider $Z_{n+1}$ defined by~\eqref{eq:Z-Picard1}.
Namely we are assuming \eqref{eq:Du_n-bound} holds for $u_n$ and we wish to apply Proposition \ref{prop:mall-diff-stoch-conv} to prove that $Z_{n+1}$ defined by \eqref{eq:Z-Picard1} is Malliavin differentiable. To this aim, Proposition \ref{prop:mall-diff-stoch-conv} will be applicable if $\si(u_n)$ satisfies \eqref{a801}--\eqref{a802}. We proceed to check those assumptions below.

  From \eqref{eq:u_n-conv}, for each $n \in \mathbb{N}$,
  \begin{equation}
    \sup_{x_0 \in \mathbb{R}^d} \E \left|\sup_{t \in [0,T]} \sup_{x \in \mathbb{R}^d} \frac{|u_n(t,x)|}{1 + |x-x_0|^\theta} \right|^p< +\infty.
  \end{equation}
  By setting $x = x_{0}$ in the above expression and moving the time supremum outside of the expectation,
  \begin{eqnarray}
    \sup_{x_0 \in \mathbb{R}^d} \sup_{t \in[0,T]} \E |u_{n}(t,x_0)|^p
    &\leq& \sup_{x_0 \in \mathbb{R}^d} \E\left|\sup_{t \in [0,T]} |u_n(t,x_0)| \right|^p \nonumber\\
    &\leq&
    \sup_{x_0 \in \mathbb{R}^d} \E \left| \sup_{t \in [0,T]} \sup_{x \in \mathbb{R}^d}
    \frac{ |u_n(t,x)|}{1 + |x-x_0|^\theta} \right|^p < +\infty.
  \end{eqnarray}
  Therefore, $u_n$ satisfies \eqref{a801}. The same reasoning shows that \eqref{eq:Du_n-bound} implies that $Du_n$ satisfies relation~\eqref{a802}. In summary,
  \begin{equation*}
\sup_{t \in [0,T]}\sup_{x \in \mathbb{R}^d} \E|u_n(t,x)|^2 <+\infty,
\quad\text{and}\quad
\sup_{t \in [0,T]}\sup_{x \in \mathbb{R}^d} \E|Du_n(t,x)|_{\H_T}^2 <+\infty.
\end{equation*}
Since $\si$ verifies Assumptions~\ref{assum:sig}, we also have
 \begin{equation*}
\sup_{t \in [0,T]}\sup_{x \in \mathbb{R}^d} \E|\si\lp u_n(t,x)\rp |^2 <+\infty,
\quad\text{and}\quad
\sup_{t \in [0,T]}\sup_{x \in \mathbb{R}^d} \E|D\si\lp u_n(t,x)\rp|_{\H_T}^2 <+\infty.
\end{equation*}
  Now observe that the stochastic integral $Z_{n+1}$ defined in \eqref{eq:Z-Picard1} can be written as $\ci^{\sigma(u_n)}$ using the notation of \eqref{eq:I-integral-def}. Therefore, Proposition \ref{prop:mall-diff-stoch-conv} asserts that $Z_{n+1}(t,x)$ is an element of $\mathbb{D}^{1,2}$ and
\begin{align} \label{eq:DZ_n-formula}
D_{h}Z_{n+1}(t,x)
    = &\left<G(t-\cdot,x-\cdot)\sigma(u_n(\cdot,\cdot)),h \right>_{\H_T} \nonumber\\
&+ \int_0^t \int_{\mathbb{R}^d} G(t-s,x-y) \sigma'(u_n(s,y)) D_{h}u_n(s,y) \, W(ds \, dy) \, .
\end{align}

Having shown that $Z_{n+1}(t,x)\in\mathbb{D}^{1,2}$ for all $(t,x)\in[0,T]\times\R^{d}$, we proceed to prove~\eqref{eq:Z-mall-derivative} and~\eqref{eq:u-mall-derivative}. To this aim, observe that relation \eqref{eq:DZ_n-formula} can be written as
  \begin{equation}
    D_h Z_{n+1}(t,x) = \left<G(t-\cdot,x-\cdot)\sigma(u_n), h \right>_{\H_T} + \ci^{\sigma'(u_n)D_hu_n}(t,x) \, ,
  \end{equation}
  where we invoked our notation \eqref{eq:I-integral-def} again. We can bound the first term on the right-hand-side thanks to \eqref{eq:H-norm-conv} and the second term on the right-hand side (remember that $\si'$ is uniformly bounded according to Assumption~\ref{assum:sig}) with \eqref{eq:weighted-Lp-Hilbert} to get
  \begin{align} \label{eq:DZ_n-Lp}
    &\sup_{x_0 \in \mathbb{R}^d} \E \left|\sup_{t \in [0,T]} \sup_{x \in \mathbb{R}^d} \frac{|DZ_{n+1}(t,x)|_{\H_T}}{1 + |x-x_0|^\theta} \right|^p\\
    &\leq C (Q_\Lambda(T))^{\frac{p}{2}} \sup_{x_0 \in \mathbb{R}^d}\E\left| \sup_{t \in [0,T]} \sup_{x \in \mathbb{R}^d} \frac{|\sigma(u_n(t,x))|}{1 + |x-x_0|^\theta}\right|^p + C_{T,p,\theta}\sup_{x_0 \in \mathbb{R}^d} \E \left|\sup_{t \in [0,T]} \sup_{x \in \mathbb{R}^d} \frac{|Du_{n}(t,x) |_{\H_T}}{1 + |x-x_0|^\theta} \right|^p .
    \nonumber
  \end{align}
  In addition, invoking Lemma \ref{lem:weighted-space-Hilbert}, we have
  \begin{equation}  \label{eq:C-small}
    \lim_{T \downarrow 0} C_{T,p,\theta}=0,
  \end{equation}
  and we also recall that $Q_\Lambda(T)$ is defined in \eqref{a821}.

  We are now ready to propagate relation \eqref{eq:Du_n-bound} for $Du_n$ on a small time interval. Namely, because $u_{n+1} = \mathcal{M}(U_0 + Z_{n+1})$, relations \eqref{eq:M-Malliavin-Lipschitz} and \eqref{eq:DZ_n-Lp} guarantee that
  \begin{align} \label{eq:Du_n-Lp}
    &\sup_{x_0 \in \mathbb{R}^d} \E \left|\sup_{t \in [0,T]} \sup_{x \in \mathbb{R}^d} \frac{|Du_{n+1}(t,x)|_{\H_T}}{1 + |x-x_0|^\theta} \right|^p
   \leq C\sup_{x_0 \in \mathbb{R}^d}\E \left|\sup_{t \in [0,T]} \sup_{x \in \mathbb{R}^d} \frac{|DU_0(t,x)|_{\H_T}}{1 + |x-x_0|^\theta} \right|^p \\
     &+C (Q_\Lambda(T))^{\frac{p}{2}} \sup_{x_0 \in \mathbb{R}^d}\E\left| \sup_{t \in [0,T]} \sup_{x \in \mathbb{R}^d} \frac{|\sigma(u_n(t,x))|}{1 + |x-x_0|^\theta}\right|^p + C_{T,p,\theta}\sup_{x_0 \in \mathbb{R}^d} \E \left|\sup_{t \in [0,T]} \sup_{x \in \mathbb{R}^d} \frac{|Du_{n}(t,x) |_{\H_T}}{1 + |x-x_0|^\theta} \right|^p .
     \nonumber
  \end{align}
  In addition, if the initial data $u_0$ is non-random, then $DU_0(t,x) \equiv 0$. We  choose $T_0>0$ small enough so that the coefficient $C_{T_0,p,\theta}<1$. Then by induction, invoking~\eqref{eq:Du_n-Lp} we get
  \begin{multline}\label{b8}
    \sup_n \sup_{x_0 \in \mathbb{R}^d} \E \left|\sup_{t \in [0,T_0]} \sup_{x \in \mathbb{R}^d} \frac{|Du_{n}(t,x)|_{\H_T}}{1 + |x-x_0|^\theta} \right|^p \\
    \leq
    \frac{C(Q_\Lambda(T))^{\frac{p}{2}}}{1 - C_{T_0,p,\theta}} \sup_n   \sup_{x_0 \in \mathbb{R}^d}\E\left| \sup_{t \in [0,T]} \sup_{x \in \mathbb{R}^d} \frac{|\sigma(u_n(t,x))|}{1 + |x-x_0|^\theta}\right|^p .
  \end{multline}
  The right hand side of the above expression is finite because of \eqref{eq:u_n-conv} and the assumption that $\sigma$ is globally Lipschitz continuous. Therefore,
  \begin{equation} \label{eq:Du_n-uniform}
    \sup_n \sup_{x_0 \in \mathbb{R}^d} \E \left|\sup_{t \in [0,T_0]} \sup_{x \in \mathbb{R}^d} \frac{|Du_{n}(t,x)|_{\H_T}}{1 + |x-x_0|^\theta} \right|^p <+\infty.
  \end{equation}
  Then \eqref{eq:DZ_n-Lp}, \eqref{eq:u_n-conv}, and \eqref{eq:Du_n-uniform} imply that
  \begin{equation} \label{eq:DZ_n-uniform}
    \sup_n \sup_{x_0 \in \mathbb{R}^d} \E \left|\sup_{t \in [0,T_0]} \sup_{x \in \mathbb{R}^d} \frac{|DZ_{n}(t,x)|_{\H_T}}{1 + |x-x_0|^\theta} \right|^p <+\infty.
  \end{equation}
  In particular, for any fixed $(t,x) \in [0,T_0]\times \mathbb{R}^d$, we have $\sup_n \E|Du_n(t,x)|_{\H_T}^2<+\infty$ and $\sup_n\E|DZ_n(t,x)|_{\H_T}^2<+\infty$. Approximations \eqref{eq:Z_n-conv}--\eqref{eq:u_n-conv} and Proposition \ref{prop:Malliavin-approx} guarantee that $u(t,x)$ and $Z(t,x)$ are Malliavin differentiable for $t \in [0,T_0]$ and $x \in \mathbb{R}^d$, where $T_{0}$ is the small parameter chosen above. Furthermore, \eqref{eq:Z-mall-derivative}--\eqref{eq:u-mall-derivative} hold for $T<T_0$ by Fatou's lemma

  We can extend this result to arbitrary time horizons $T>T_0$ by taking advantage of the self-similarity of the process $W$. Namely for any $n \in \mathbb{N}$ and $t>0$,
  \begin{align*}
    u_{n+1}(T_0 + t,x) = &\int_{\mathbb{R}^d} G(t,x-y) u_{n+1}(T_0,y)dy + \int_0^t \int_{\mathbb{R}^d} G(t-s,x-y) f(u_n(T_0 + s,y))dyds \nonumber\\&+ \int_0^t \int_{\mathbb{R}^d}G(t-s,x-y) \sigma(u_n(T_0+s))W(dy,(T_0+ds))
  \end{align*}
  This translated process has the property that
  \begin{equation*}
    u_{n+1}(T_0 + t,x) = \mathcal{M} \left(U_{T_0,n+1} + \tilde{Z}_{n+1} \right)(t,x)
  \end{equation*}
  where we have set
  \begin{eqnarray*}
  U_{T_0,n+1}(t,x) &:=& \int_{\mathbb{R}^d} G(t,x-y) u_{n+1}(T_0,y)dy \\
  \tilde{Z}_{n+1}(t,x) &:=& \int_0^t \int_{\mathbb{R}^d}G(t-s,x-y) \sigma(u_n(T_0+s))W(dy,(T_0+ds))
  \end{eqnarray*}
    The $p$-th moment bounds \eqref{eq:DZ_n-Lp}--\eqref{eq:Du_n-Lp} continue to hold with $U_0$ replaced by $U_{T_0,n+1}$ and $Z_n$ replaced by $\tilde{Z_n}$. The constants $C_{T,p,\theta}$ and $Q_\Lambda(T)$ in these expressions do not change (they only depend on the law of $W$, which is invariant by time shift) and we can use the same value of $T_0$ as above. The only novelty is that $DU_0\equiv 0$, while $DU_{T_0,n+1}$ is not zero. But we have a uniform bound on
  \begin{equation*}
    \sup_n \sup_{x_0 \in \mathbb{R}^d} \E \left|\sup_{t \in [0,T_0]} \sup_{x \in \mathbb{R}^d} \frac{|DU_{T_0,n+1}(t,x)|_{\H_T}}{1 + |x-x_0|^\theta} \right|^p \, ,
  \end{equation*}
  as a consequence of \eqref{eq:Du_n-uniform} and properties of the heat kernel $G$. Therefore, the same arguments that implied \eqref{eq:Du_n-uniform}--\eqref{eq:DZ_n-uniform} imply
  \begin{equation} \label{eq:Du_n-uniform-2}
    \sup_n \sup_{x_0 \in \mathbb{R}^d} \E \left|\sup_{t \in [T_0,2T_0]} \sup_{x \in \mathbb{R}^d} \frac{|Du_{n}(t,x)|_{\H_T}}{1 + |x-x_0|^\theta} \right|^p <+\infty.
  \end{equation}
  It follows immediately from \eqref{eq:DZ_n-Lp} that
  \begin{equation} \label{eq:DZ_n-uniform-2}
    \sup_n \sup_{x_0 \in \mathbb{R}^d} \E \left|\sup_{t \in [T_0,2T_0]} \sup_{x \in \mathbb{R}^d} \frac{|DZ_{n}(t,x)|_{\H_T}}{1 + |x-x_0|^\theta} \right|^p <+\infty.
  \end{equation}
  Proposition \ref{prop:Malliavin-approx} guarantees that $u(t,x)$ and $Z(t,x)$ are Malliavin differentiable for $t \in [T_0,2T_0]$.
  Repetition of this argument proves the Malliavin differentiability of $u(t,x)$ and $Z(t,x)$ for  $t \in [2T_0, 3T_0], [3T_0,4T_0]$, and so on.
\end{proof}

We close this section by identifying an integral equation satisfied by directional Malliavin derivatives of $u$
\begin{corollary}
  Recall that $\H_T$ is the Hilbert space with inner product given by \eqref{eq:H_T} and assume the same hypotheses as in Theorem \ref{thm:u-mall-diff}. For any $T>0$ and $h \in \H_T$, and $(t,x) \in [0,T]\times \mathbb{R}^d$, the Malliavin derivative of $u(t,x)$ solves the integral equation
  \begin{align} \label{eq:Du-mild}
    D_h u(t,x) = &\left<G(t-\cdot,x-\cdot)\sigma(u(\cdot,\cdot)), h \right>_{\H_T} \nonumber\\
    &+ \int_0^t \int_{\mathbb{R}^d} G(t-s,x-y) f'(u(s,y))D_hu(s,y)dyds \nonumber\\
    &+ \int_0^t \int_{\mathbb{R}^d} G(t-s,x-y)\sigma'(u(s,y))D_h u(s,y)W(ds \, dy).
  \end{align}
\end{corollary}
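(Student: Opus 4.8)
The plan is to assemble \eqref{eq:Du-mild} from two differentiation rules that are already available: Proposition~\ref{prop:mall-diff-stoch-conv} for a heat-kernel stochastic convolution, and the mild equation \eqref{eq:Dm-mild} satisfied by $D[\mathcal{M}(z)]$ identified in the remark following Proposition~\ref{prop:M-Malliavin}. The starting observation is that, since $u$ solves \eqref{eq:mild-solution}, it can be written as $u=\mathcal{M}(U_0+Z)$, where $U_0$ is the deterministic term \eqref{eq:U-Picard0} and $Z$ is the stochastic convolution \eqref{eq:Z-limit}, i.e.\ $Z=\ci^{\sigma(u)}$ in the notation of \eqref{eq:I-integral-def}. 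The two steps below differentiate $Z$ and then differentiate through $\mathcal{M}$, and \eqref{eq:Du-mild} comes out by substituting the first into the second.

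\emph{Step 1: differentiating $Z$.} By Theorem~\ref{thm:u-mall-diff}, $u(t,x)\in\D^{1,2}$ for every $(t,x)\in[0,T]\times\R^d$, and evaluating \eqref{eq:u-mall-derivative} (and the analogous bound on $u$ itself) at $x=x_0$ and moving the supremum in $t$ outside the expectation yields $\sup_{t\in[0,T]}\sup_{x\in\R^d}\E|u(t,x)|^2<+\infty$ and $\sup_{t\in[0,T]}\sup_{x\in\R^d}\E|Du(t,x)|_{\H_T}^2<+\infty$. Since $\sigma\in C^1$ with bounded derivative (Assumption~\ref{assum:sig}), the chain rule for the Malliavin derivative gives $\sigma(u(t,x))\in\D^{1,2}$ with $D[\sigma(u(t,x))]=\sigma'(u(t,x))\,Du(t,x)$, so the adapted random field $X:=\sigma(u)$ satisfies the hypotheses \eqref{a801}--\eqref{a802} of Proposition~\ref{prop:mall-diff-stoch-conv}. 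That proposition, applied to $X=\sigma(u)$ with $\ci^{\sigma(u)}=Z$, then gives, for every $h\in\H_T$,
\begin{align*}
  D_h Z(t,x) &= \left<G(t-\cdot,x-\cdot)\sigma(u(\cdot,\cdot)),\,h\right>_{\H_T}\\
  &\quad+\int_0^t\int_{\R^d} G(t-s,x-y)\,\sigma'(u(s,y))\,D_h u(s,y)\,W(ds\,dy).
\end{align*}

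\emph{Step 2: differentiating through $\mathcal{M}$, and conclusion.} The random field $z:=U_0+Z$ lies in $L^2(\Omega:\cac_{\theta,x_0}([0,T]\times\R^d))$ (using the boundedness of $u_0$ from Assumption~\ref{assum:init-cond} and the $L^p$ bound on $Z$ given by Lemma~\ref{lem:weighted-space-Hilbert} applied to $\sigma(u)$), is Malliavin differentiable at every point with $Dz=DZ$ since $DU_0\equiv 0$ (the initial condition being deterministic), and satisfies \eqref{oo} by \eqref{eq:Z-mall-derivative}. Hence $z$ meets the hypotheses of Proposition~\ref{prop:M-Malliavin}, and the mild equation \eqref{eq:Dm-mild} applies with $m=\mathcal{M}(z)=u$, giving
\[
  Du(t,x)=\int_0^t\int_{\R^d} G(t-s,x-y)\,f'(u(s,y))\,Du(s,y)\,dy\,ds+DZ(t,x).
\]
Taking the $\H_T$-inner product with an arbitrary $h\in\H_T$, which commutes with the Bochner integral, and inserting the expression for $D_h Z(t,x)$ from Step~1 produces exactly \eqref{eq:Du-mild}; along the way this confirms that all three terms on the right-hand side of \eqref{eq:Du-mild} are well-defined $\H_T$-valued objects, the Lebesgue and stochastic integrals being controlled respectively by Lemma~\ref{lem:H-norm-conv} and Lemma~\ref{lem:weighted-space-Hilbert} together with the moment bounds of Theorem~\ref{thm:u-mall-diff}.

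I do not expect a genuine analytic obstacle here: the substance was already carried out in Proposition~\ref{prop:mall-diff-stoch-conv} and in the remark establishing \eqref{eq:Dm-mild} (whose own delicate point was passing to the weak limit in the Yosida-approximated nonlinearity $f'(m_\lambda)\,Dm_\lambda$ via the exponential growth bound \eqref{eq:f-exp-bound}), so the only work is the bookkeeping of hypotheses above. An alternative route, avoiding direct appeal to \eqref{eq:Dm-mild}, would pass to the limit in the Picard scheme: combine the $\mathcal{M}$-version of \eqref{eq:Dm-mild} for $u_{n+1}=\mathcal{M}(U_0+Z_{n+1})$ with the explicit formula \eqref{eq:DZ_n-formula} for $D_h Z_{n+1}$ and let $n\to\infty$, using the weak convergence $D_h u_n\to D_h u$ in $L^2(\Omega;\H_T)$ from Proposition~\ref{prop:Malliavin-approx} and the uniform bounds \eqref{eq:Du_n-uniform}--\eqref{eq:DZ_n-uniform}; the minor difficulty there is that the equation for $D_h u_{n+1}$ couples $u_n$ (through $Z_{n+1}$) with $u_{n+1}$ (through $\mathcal{M}$), so one invokes the $\cac_{\theta,x_0}$-convergence \eqref{eq:u_n-conv} to identify the limiting nonlinear terms.
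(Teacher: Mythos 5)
Your proposal is correct and follows essentially the same route as the paper, which obtains \eqref{eq:Du-mild} by combining the mild equation \eqref{eq:Dm-mild} for $D[\mathcal{M}(z)]$ with the differentiation rule for stochastic convolutions (Proposition~\ref{prop:mall-diff-for-stoch-intg} via Proposition~\ref{prop:mall-diff-stoch-conv}) applied to $Z=\ci^{\sigma(u)}$, using the bounds from Theorem~\ref{thm:u-mall-diff} to verify the hypotheses. Your write-up simply makes explicit the bookkeeping that the paper's one-line proof leaves implicit.
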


\begin{proof}
  The integral equation \eqref{eq:Du-mild} is easily obtained by combining \eqref{eq:Dm-mild} with \eqref{eq:DZ_n-uniform} and Proposition \ref{prop:mall-diff-for-stoch-intg}.
\end{proof}

\section{Positivity of the Malliavin derivative} \label{sec:positivity}
In this section we prove that the solution $u$ to \eqref{eq:mild-solution} has the property that $u(t,x)$ admits a density for $t>0$, $x \in \mathbb{R}^d$ by analyzing the quantity $\|Du(t,x)\|_{\ch_T}$. Our main theorem can be expressed as below.

\begin{theorem}\label{thm:dens-u}
Let $u$ solution to \eqref{eq:mild-solution}, where we assume that the coefficients $b,\si$ satisfy Assumptions~\ref{assum:sig}--\ref{assum:f}. We also suppose that the Gaussian noise $\dw$ verifies Hypothesis~\ref{assum:dalang}. Then for $t\in(0,T]\times\R^{d}$, the random variable $u(t,x)$ admits a density with respect to Lebesgue measure.
\end{theorem}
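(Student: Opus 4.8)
The plan is to invoke the density criterion of Proposition~\ref{prop:malliavin>0}. Since Theorem~\ref{thm:u-mall-diff} already shows that $u(t,x)\in\mathbb{D}^{1,p}$ for every $p>1$, it only remains to prove that $\|Du(t,x)\|_{\H_T}>0$ almost surely whenever $t>0$. I would establish this by testing $Du(t,x)$ against the explicit family of deterministic directions
\[
h_{t,x,\delta}(s,y):=\mathbbm{1}_{[t-\delta,t]}(s)\,G(t-s,x-y),\qquad 0<\delta\le t,
\]
which belong to $\H_T$ with $\|h_{t,x,\delta}\|_{\H_T}^{2}=Q_\Lambda(\delta)$, the quantity from~\eqref{a821}. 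Using the mild equation~\eqref{eq:Du-mild} and writing $\phi_{t,x}:=G(t-\cdot,x-\cdot)\sigma(u(\cdot,\cdot))$, one gets the decomposition
\[
D_{h_{t,x,\delta}}u(t,x)=\langle \phi_{t,x},h_{t,x,\delta}\rangle_{\H_T}+A_\delta(t,x)+B_\delta(t,x),
\]
where $A_\delta$ is the Lebesgue integral carrying the factor $f'(u)$ and $B_\delta$ is the It\^o integral carrying $\sigma'(u)$. Because $\sigma\ge\alpha$ by Assumption~\ref{assum:sig} and every integrand is nonnegative, one has the pathwise lower bound $\langle\phi_{t,x},h_{t,x,\delta}\rangle_{\H_T}\ge\alpha\,Q_\Lambda(\delta)$, so the whole argument reduces to showing that $A_\delta(t,x)$ and $B_\delta(t,x)$ are of strictly smaller order than $Q_\Lambda(\delta)$ as $\delta\downarrow0$.

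The first observation is that, since $h_{t,x,\delta}$ is supported in time on $[t-\delta,t]$ while $Du(s,y)$ is supported on $[0,s]$, we have $D_{h_{t,x,\delta}}u(s,y)=0$ for $s\le t-\delta$, so both $A_\delta$ and $B_\delta$ only see the thin slab $s\in(t-\delta,t]$. For $A_\delta$, the crude pathwise estimate $|D_{h_{t,x,\delta}}u(s,y)|\le\|Du(s,y)\|_{\H_T}\,Q_\Lambda(\delta)^{1/2}$, combined with the a.s.\ weighted bounds of Theorem~\ref{thm:u-mall-diff} and the fact that $|f'(u(s,y))|\le Ke^{K|u(s,y)|^\nu}$ with $\nu\theta<2$ is integrable against the heat kernel (using $u\in\cac_{\theta,x_0}$ a.s.), already gives $|A_\delta(t,x)|\le C(\omega)\,\delta\,Q_\Lambda(\delta)^{1/2}$, hence $|A_\delta(t,x)|/Q_\Lambda(\delta)\le C(\omega)\delta/Q_\Lambda(\delta)^{1/2}\to0$ a.s.\ (here one uses $Q_\Lambda(\delta)\ge c\delta$, valid because $\mu$ is a nonzero measure). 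The stochastic term is the crux: the same crude bound only yields $\E|B_\delta(t,x)|^2\le C\,Q_\Lambda(\delta)^2$, of the same order as $\langle\phi_{t,x},h_{t,x,\delta}\rangle_{\H_T}^2$, hence useless. To do better I would run a bootstrap on the slab: applying~\eqref{eq:Du-mild} at a generic $(s,y)$ with $s\in[t-\delta,t]$ one checks that the source term $\langle\phi_{s,y},h_{t,x,\delta}\rangle_{\H_T}$ has size $\lesssim(1+|y-x_0|^\theta)\,Q_\Lambda(\delta)$ -- \emph{linear}, not square-root, in $Q_\Lambda(\delta)$ -- since it is a genuine pairing of two nonnegative kernels over a slab of thickness $\delta$ (only the linear growth of $\sigma$ is used, so no exponential moments are needed). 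Feeding this into a short-time Gronwall inequality for $\Psi(s):=\sup_y\bigl(\E|D_{h_{t,x,\delta}}u(s,y)|^{2}\bigr)^{1/2}/(1+|y-x_0|^\theta)$, whose a priori finiteness is provided by Theorem~\ref{thm:u-mall-diff}, and using that the perturbative constants are $O(\delta+Q_\Lambda(\delta)^{1/2})\to0$, yields $\Psi(s)\le C\,Q_\Lambda(\delta)$ uniformly on $[t-\delta,t]$ for $\delta$ small; the $f'(u)$ coefficient, which has no useful moments, is handled here by a standard localization in the ambient randomness (stopping when the weighted sup-norm of $u$ exceeds a level, so that $f'(u)$ is dominated by a deterministic heat-kernel-integrable envelope). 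Plugging $\Psi(s)\le CQ_\Lambda(\delta)$ into a BDG estimate together with the kernel computation~\eqref{eq:H_T-squared} then gives $\E|B_\delta(t,x)|^2\le C\,Q_\Lambda(\delta)^3$.

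With this gain the argument closes. Since the strong Dalang condition forces $Q_\Lambda(\delta)\le C\delta^\eta$, I pick $\delta_k\downarrow0$ with $\sum_k\delta_k^\eta<\infty$; from $\E\bigl[|B_{\delta_k}(t,x)|^2/\langle\phi_{t,x},h_{t,x,\delta_k}\rangle_{\H_T}^2\bigr]\le(C/\alpha^2)Q_\Lambda(\delta_k)$, Chebyshev and Borel--Cantelli give $B_{\delta_k}(t,x)/\langle\phi_{t,x},h_{t,x,\delta_k}\rangle_{\H_T}\to0$ a.s., while $A_{\delta_k}(t,x)/\langle\phi_{t,x},h_{t,x,\delta_k}\rangle_{\H_T}\to0$ a.s.\ by the pathwise bound above. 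Hence $D_{h_{t,x,\delta_k}}u(t,x)/\langle\phi_{t,x},h_{t,x,\delta_k}\rangle_{\H_T}\to1$ almost surely, so almost surely there is a (random) index $k$ with $D_{h_{t,x,\delta_k}}u(t,x)>0$, and therefore $\|Du(t,x)\|_{\H_T}\ge D_{h_{t,x,\delta_k}}u(t,x)/\|h_{t,x,\delta_k}\|_{\H_T}>0$. This gives $\Pro(\|Du(t,x)\|_{\H_T}>0)=1$, and Proposition~\ref{prop:malliavin>0} yields the existence of a density for $u(t,x)$ whenever $t>0$. The main obstacle is precisely the bootstrap for $B_\delta$: extracting the extra power of $Q_\Lambda(\delta)$ from the stochastic term -- which is what makes Borel--Cantelli applicable -- requires both the observation that the source of the mild equation~\eqref{eq:Du-mild} is linear in $Q_\Lambda(\delta)$ and careful bookkeeping of the weighted-norm constants so that the short-time Gronwall closes uniformly, together with the localization needed to tame the unbounded coefficient $f'(u)$.
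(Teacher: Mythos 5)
Your overall architecture is exactly the paper's: the same directions $h_{t,x,\delta}$, the same decomposition $D_{h}u=\langle\phi_{t,x},h\rangle_{\H_T}+A_\delta+B_\delta$, the same lower bound $\alpha Q_\Lambda(\delta)$ from the nondegeneracy of $\sigma$, and the same Borel--Cantelli endgame along $\delta_k\downarrow 0$. Your treatment of $A_\delta$ is actually a legitimate simplification: the crude Cauchy--Schwarz bound $|D_hu|\le\|Du\|_{\H_T}Q_\Lambda(\delta)^{1/2}$ plus $Q_\Lambda(\delta)\ge c\delta$ gives $|A_\delta|/Q_\Lambda(\delta)\lesssim\delta^{1/2}\to 0$ pathwise, whereas the paper routes through an extra Borel--Cantelli step (Lemma~\ref{lem:Dhu-as-conv}) to get the sharper bound $|A_\delta|\lesssim Q_\Lambda(\delta)\delta^{1/2}$.

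The gap is in your treatment of $B_\delta$, which you yourself flag as the crux. To get $\E|B_\delta|^2\lesssim Q_\Lambda(\delta)^3$ you need the slab estimate $\E|D_{h_{t,x,\delta}}u(s,y)|^2\lesssim(1+|y-x_0|^\theta)^2Q_\Lambda(\delta)^2$, and you propose to prove it by a short-time Gronwall on $\Psi(s)=\sup_y(\E|D_hu(s,y)|^2)^{1/2}/(1+|y-x_0|^\theta)$, taming $f'(u)$ by stopping when the weighted sup-norm of $u$ exceeds a level $N$. This does not close as stated. On the localized event the only available envelope is $|f'(u(r,z))|\le K\exp(KN^\nu(1+|z-x_0|^\theta)^\nu)$, which grows exponentially in $|z|$; convolving it against the heat kernel produces a factor of order $\exp(C_N|y-x_0|^{\theta\nu})$, so the operator $v\mapsto\int G\,f'(u)\,v$ does not map the polynomial weight class $1+|y-x_0|^\theta$ into itself, and the Gronwall/Picard iteration for $\Psi$ diverges (switching to an exponential weight does not help, since each iteration inflates the exponent by a factor $>1$). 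This is precisely the obstruction the paper is built to avoid: it never takes moments of $f'(u)$ at all, but instead writes $D_hu=\cl(\langle\phi,h\rangle_{\H_T}+B_h)$ and invokes the \emph{pathwise} Lipschitz estimate of Theorem~\ref{thm:M-Lip-inhomo}, whose constant depends only on the one-sided bound $f'\le\kappa$ and not on $|f'|$; the resulting inequality \eqref{c9} is then closed for small $\delta$ by absorbing the $B$ term using $C_{\delta,p,\theta}\to 0$ from Lemma~\ref{lem:weighted-space-Hilbert}, yielding \eqref{eq:Dh-u-moment-upper-bound} and hence $\E|B_\delta|^p\le C_{\delta,p,\theta}\,C\,(Q_\Lambda(\delta))^p=o((Q_\Lambda(\delta))^p)$. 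If you replace your Gronwall-plus-localization step by this half-Lipschitz, pathwise argument (or simply by the smallness of the stochastic-convolution constant $C_{\delta,p,\theta}$, which already gives the needed $o(1)$ gain without the extra power of $Q_\Lambda(\delta)$), the rest of your proof goes through.
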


In order to prove Theorem \ref{thm:dens-u}, we start in Subsection \ref{SS:positivity-strategy} by explaining our strategy, and we perform most of our computations in Subsections \ref{SS:lower-and-upper-bounds}--\ref{SS:Du-lower}.

\subsection{Strategy} \label{SS:positivity-strategy}

According to Proposition \ref{prop:malliavin>0}, the existence of a density is established if we can prove that $u(t,x) \in \mathbb{D}^{1,p}$ for some $p\geq 1$ and if the Malliavin derivative has strictly positive $\H_T$ norm. Moreover, Theorem \ref{thm:u-mall-diff} asserts that $u(t,x)$ belongs to the Sobolev space $\mathbb{D}^{1,p}$ for all $p >1$. It remains to prove that for every $t>0$, $x \in \mathbb{R}^d$,
\begin{equation} \label{eq:Du-pos-prob-1}
  \Pro \left(\|Du(t,x)\|_{\H_T}>0 \right)=1.
\end{equation}
For convenience we will further reduce \eqref{eq:Du-pos-prob-1} to a statement about directional derivatives. Namely condition \eqref{eq:Du-pos-prob-1} is implied if 
\begin{equation}\label{c1}
  \Pro \left(\textnormal{There exists } h \in \H_T \textnormal{ such that } D_h u (t,x)>0 \right)=1.
\end{equation}

We now elaborate on \eqref{c1}. Recall the integral equation \eqref{eq:Du-mild} for the directional Malliavin derivative of $u(t,x)$. Namely for any $h \in \H_T$, $t \in [0,T]$, and $x \in \mathbb{R}^d$, we have obtained that
\begin{equation} \label{eq:Dhu-decomposed}
  D_h u(t,x) = \left< \phi_{t,x},h \right>_{\H_T} + A_h(t,x) + B_h(t,x),
\end{equation}
where the function $\phi_{t,x}$ is defined on $[0,T]\times \mathbb{R}^d$ by
\begin{equation} \label{eq:phi-t-x}
  \phi_{t,x}(r,z) = G(t-r,x-z)\sigma(u(r,z))\mathbbm{1}_{[0,t]}(r) \, ,
\end{equation}
and where the terms $A_h$ and $B_h$ are given by
\begin{eqnarray} \label{eq:A_h}
  A_h(t,x) &=& \int_0^t \int_{\mathbb{R}^d} G(t-s,x-y)f'(u(s,y))D_hu(s,y)dyds \\
 \label{eq:B_h}
  B_h(t,x) &=& \int_0^t \int_{\mathbb{R}^d} G(t-s,x-y) \sigma'(u(s,y))D_h u(s,y) W(ds \, dy).
\end{eqnarray}
Furthermore, one can also express the directional derivative of $u(t,x)$ as
\begin{equation} \label{eq:D_h=L}
  D_h u(t,x) = \mathcal{L}\left( \left< \phi_{\cdot,\cdot},h \right>_{\H_T} + B_h(\cdot,\cdot) \right)(t,x) \, ,
\end{equation}
where $\mathcal{L}$ is the functional mapping defined in \eqref{eq:varphi-mild} with $\varphi(t,x,V) := f'(u(s,y))V$. This point of view allows us to apply Theorem \ref{thm:M-Lip-inhomo} to get useful upper bounds on $|D_hu(t,x)|$.

With \eqref{c1} in mind, for fixed $t_0 \in [0,T]$, $x_0 \in \mathbb{R}$, and $\delta\in (0,t_0)$ we chose a specific test function $h_{t_0,x_0,\delta} \in \H_T$. Specifically, let $h_{t_0,x_0,\delta}$ be  defined by
\begin{equation} \label{eq:h-delta-def-1}
    h_{t_0,x_0,\delta}(r,z): = G(t_0-r,x_0-z)\mathbbm{1}_{[t_0-\delta,t_0]}(r).
\end{equation}
Because $h_{t_0,x_0,\delta}(r,z)$ is only nonzero when $r \in [t_0-\delta,t_0]$ and $\phi_{t,x}(r,z)$ defined in \eqref{eq:phi-t-x} is only non-zero when $r \in [0,t]$, the inner product $\left<\phi_{t,x},h_{t_0,x_0,\delta}\right>_{\H_T}$ is zero when $t \in [0,t_0-\delta)$. Then because the equations \eqref{eq:Dhu-decomposed}--\eqref{eq:B_h} are linear with respect to $D_{h_{t_0,x_0,\delta}} u(t,x)$, it follows that $D_{h_{t_0,x_0,\delta}} u(t,x)=0$ for all $t \in [0,t_0-\delta]$. This means that the integral terms $A_{h_{t_0,x_0,\delta}}$ and $B_{h_{t_0,x_0,\delta}}$ in \eqref{eq:A_h} and \eqref{eq:B_h} can be written as integrals starting at $t_0-\delta$. Specifically, for any $t \in [t_0-\delta,t_0]$, we have
\begin{eqnarray} \label{eq:A_h-delta-interval}
  A_{h_{t_0,x_0,\delta}}(t,x)
  &=&
 \int_{t_0-\delta}^t \int_{\mathbb{R}^d} G(t-s,x-y)f'(u(s,y))D_{h_{t_0,x_0,\delta}}u(s,y)dyds
\\
\label{eq:B_h-delta-interval}
  B_{h_{t_0,x_0,\delta}}(t,x)
  &=&
  \int_{t_0-\delta}^t \int_{\mathbb{R}^d} G(t-s,x-y)\sigma'(u(s,y))D_{h_{t_0,x_0,\delta}}u(s,y)W(ds \, dy) \, .
\end{eqnarray}
Moreover, for $t \in [0,t_0-\delta]$ both $A_{h_{t_0,x_0,\delta}}(t,x)$ and $B_{h_{t_0,x_0,\delta}}(t,x)$ are vanishing.

With those preliminary considerations in hand, using the decomposition \eqref{eq:Dhu-decomposed}, we will achieve the desired property~\eqref{c1}. That is we will be able to show that with probability one there exists $\delta>0$ such that $D_{h_{t_0,x_0,\delta}}u(t_0,x_0) >0$. This is accomplished by proving that  $\left<\phi_{t_0,x_0}, h_{t_0,x_0,\delta}\right>_{\ch_{T}}$ is larger than $A_{h_{t_0,x_0,\delta}}(t_0,x_0) + B_{h_{t_0,x_0,\delta}}(t_0,x_0)$ when $\delta$ is small, guaranteeing that \eqref{eq:Dhu-decomposed} is positive.
The main ideas are the following.
\begin{enumerate}[wide,  labelindent=0pt, label=\emph{(\roman*)}]
\setlength\itemsep{.02in}

\item\label{c2}
Invoking the non degeneracy of the kernel $G$ and of the coefficient $\si$, we shall see that
the inner product $\left<\phi_{t_0,x_0}, h_{t_0,x_0,\delta}\right>_{\ch_{T}}$ has a lower bound that is proportional to $Q_\Lambda(\delta)$ defined in \eqref{a821}.

\item\label{c3}
Using \eqref{eq:D_h=L} along with the boundedness of $\sigma'$, we will prove that an upper bound on the $L^p(\Omega:{\cac_{\theta,x_0}([0,T]\times\mathbb{R}^d)})$ norm of
$D_{h_{t_0,x_0,\delta}}u(t,x)$ is also proportional to $Q_\Lambda(\delta)$.

\item\label{c4}
Finally, the fact that $A_{h_{t_0,x_0,\delta}}(t_0,x_0)$, and $B_{h_{t_0,x_0,\delta}}(t_0,x_0)$ in \eqref{eq:A_h-delta-interval}--\eqref{eq:B_h-delta-interval} are integrals whose integrands have $L^p(\Omega)$ norms proportional to $Q_\Lambda(\delta)$, but whose interval of integration are of size $\delta$, we can show that with probability one there exists a subsequence $\delta_k \downarrow 0$ such that
\begin{equation}
  \lim_{k \to \infty} \frac{|A_{h_{t_0,x_0,\delta_k}}(t_0,x_0)| + |B_{h_{t_0,x_0,\delta_k}}(t_0,x_0)|}{Q_\Lambda(\delta_k)} = 0,
  \quad  \text{ with probability one.}
\end{equation}
\end{enumerate}

\noindent
Putting together the 3 items above, one concludes that with probability one, there exists a small enough (random) $\delta_k = \delta_k(\omega)$ such that
\begin{equation}
  D_{h_{t_0,x_0,\delta_k}}u(t_0,x_0) = \left< \phi_{t,x},h_{t_0,x_0,\delta_k} \right>_{\H_T} + A_{h_{t_0,x_0,\delta_k}}(t_0,x_0) + B_{h_{t_0,x_0,\delta_k}}(t_0,x_0)>0.
\end{equation}
This will prove \eqref{c1} and therefore the existence of a density. The remainder of the section is devoted to detail the 3 steps above.

\subsection{Lower and upper bounds on $\left<\phi_{t,x}, h_{t_0,x_0,\delta}\right>_{\ch_{T}}$} \label{SS:lower-and-upper-bounds}
In this section we give details about item \ref{c2} above. Our findings are summarized in the following lemma.
\begin{lemma} \label{lem:phi-h}
  Let $t_0>0, x_0 \in \mathbb{R}^d$. For $\delta \in [0,t_0]$ we recall the definition of $h_{t_0,x_0,\delta}$ in~\eqref{eq:h-delta-def-1}:
  \begin{equation} \label{eq:h-delta-def}
    h_{t_0,x_0,\delta}(r,z): = G(t_0-r,x_0-z)\mathbbm{1}_{[t_0-\delta,t_0]}(r).
  \end{equation}
Also recall that $\phi_{t,x}$ is defined by \eqref{eq:phi-t-x} and $Q_\Lambda$ is introduced in \eqref{a81}--\eqref{a821}. Then we have the following lower and upper bounds for $\left<\phi_{t,x}, h_{t_0,x_0,\delta}\right>_{\ch_{T}}$.
  For any $T>0,t_0 \in [0,T], x_0 \in \mathbb{R}^d$ and $\delta \in [0,t_0]$
  \begin{equation} \label{eq:phi-h-product-lower}
    \left<\phi_{t_0,x_0}, h_{t_0,x_0,\delta}\right>_{\ch_T} \geq \alpha \, Q_\Lambda(\delta) \, ,
    \quad\text{ with probability one} \, 
  \end{equation}
  where $\alpha>0$ is from \eqref{eq:sigma-non-degen}.
In addition, there exists $C>0$ such that for any $T>0, t_0\in [0,T], x_0 \in \mathbb{R}^d, y_0 \in \mathbb{R}^d$, and  $\delta \in [0,t_0]$,
  \begin{equation} \label{eq:phi-h-product-upper}
    \sup_{t \in [t_0-\delta,t_0]} \sup_{x \in \mathbb{R}^d}\frac{|\left<\phi_{t,x}, h_{t_0,x_0,\delta} \right>_{\ch_{T}}|}{1 + |x-y_0|^\theta} \leq C Q_{\Lambda}(\delta) \left( 1 + \sup_{s \in [t_0-\delta, t_0]} \sup_{y \in \mathbb{R}^d} \frac{|u(s,y)|}{1 + |y-y_0|^\theta} \right).
  \end{equation}
  Furthermore, if $t \in [0, t_0-\delta], x_0 \in \mathbb{R}^d$, and $x \in \mathbb{R}^d$,
  \begin{equation} \label{eq:phi-h-product-0}
    \left<\phi_{t,x}, h_{t_0,x_0,\delta}\right>_{\ch_{T}} = 0.
  \end{equation}
\end{lemma}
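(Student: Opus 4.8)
\emph{Overall strategy.} The plan is to obtain the three statements by unwinding the definition \eqref{eq:H_T} of the $\H_T$ inner product, using positivity of $G$ and $\Lambda$, and repeatedly invoking the identity recorded in \eqref{a821}: for every $s>0$ the quantity $\int_{\R^d}\int_{\R^d}G(s,z-y_1)G(s,z-y_2)\Lambda(y_1-y_2)\,dy_1dy_2$ does not depend on $z$ and equals $\int_{\R^d}e^{-s|\xi|^2}\mu(d\xi)$, so that $Q_\Lambda(\delta)=\int_0^\delta\int_{\R^d}e^{-s|\xi|^2}\mu(d\xi)\,ds$ and $Q_\Lambda$ is nondecreasing. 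The identity \eqref{eq:phi-h-product-0} then comes for free: plugging \eqref{eq:phi-t-x} and \eqref{eq:h-delta-def} into \eqref{eq:H_T}, the $r$-integrand of $\langle\phi_{t,x},h_{t_0,x_0,\delta}\rangle_{\H_T}$ is supported on $[0,t]\cap[t_0-\delta,t_0]$, which is contained in the single point $\{t_0-\delta\}$ when $t\le t_0-\delta$, hence is Lebesgue-null.

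\emph{Lower bound \eqref{eq:phi-h-product-lower}.} I would take $t=t_0$ and $x=x_0$; the $r$-integral then runs over $[t_0-\delta,t_0]$, and since $\sigma(u(r,y_1))\ge\alpha$ by \eqref{eq:sigma-non-degen} while $G,\Lambda\ge0$, one replaces $\sigma(u(r,y_1))$ by $\alpha$ to get a lower bound. The substitution $s=t_0-r$ together with the $z$-independence noted above identifies the resulting expression with $\alpha\,Q_\Lambda(\delta)$. This step is pure bookkeeping.

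\emph{Upper bound \eqref{eq:phi-h-product-upper}.} This is the only part with a little content. I would first note that the product $\phi_{t,x}(r,\cdot)\,h_{t_0,x_0,\delta}(r,\cdot)$ vanishes for $r\notin[t_0-\delta,t]$ (here $t\le t_0$; if $t<t_0-\delta$ there is nothing to prove), so Cauchy--Schwarz in $\H_T$ bounds $|\langle\phi_{t,x},h_{t_0,x_0,\delta}\rangle_{\H_T}|$ by the product of $\|h_{t_0,x_0,\delta}\mathbbm{1}_{[t_0-\delta,t]}\|_{\H_T}$ and $\|\phi_{t,x}\mathbbm{1}_{[t_0-\delta,t]}\|_{\H_T}$. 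The $h$-factor is handled as in the lower bound: after $s=t_0-r$ its square equals $\int_{t_0-t}^{\delta}\int_{\R^d}e^{-s|\xi|^2}\mu(d\xi)\,ds\le Q_\Lambda(\delta)$, using $t\le t_0$ and monotonicity of $Q_\Lambda$. For the $\phi$-factor I would use the bounded derivative of $\sigma$ (Assumption~\ref{assum:sig}) to write $|\sigma(u(r,y))|\le C(1+|u(r,y)|)\le C(1+N)(1+|y-y_0|^\theta)$, where $N:=\sup_{s\in[t_0-\delta,t_0]}\sup_{y\in\R^d}\tfrac{|u(s,y)|}{1+|y-y_0|^\theta}$ is exactly the quantity on the right of \eqref{eq:phi-h-product-upper}. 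This reduces the square of the $\phi$-factor to
\[
C(1+N)^2\int_{t_0-\delta}^{t}\int_{\R^d}\int_{\R^d}G(t-r,x-y_1)G(t-r,x-y_2)(1+|y_1-y_0|^\theta)(1+|y_2-y_0|^\theta)\Lambda(y_1-y_2)\,dy_1dy_2dr,
\]
which is precisely the object estimated in the proof of Lemma~\ref{lem:H-norm-conv}: one splits $1+|y_i-y_0|^\theta\le C(1+|x-y_0|^\theta+|y_i-x|^\theta)$ as in \eqref{b6}, absorbs the factors $|y_i-x|^\theta$ into the heat kernels via \eqref{b7}, and passes to Fourier variables. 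The only change from Lemma~\ref{lem:H-norm-conv} is that the time integral now runs over $[t_0-\delta,t]$; after $s=t-r$ this is an integral over $[0,t-t_0+\delta]\subseteq[0,\delta]$, so each of the resulting terms (including those carrying an extra factor $(t-r)^\theta\le T^\theta$) is $\le C\,Q_\Lambda(\delta)$ by monotonicity of $Q_\Lambda$. Hence the $\phi$-factor squared is $\le C(1+N)^2(1+|x-y_0|^\theta)^2Q_\Lambda(\delta)$. Multiplying the two factors, dividing by $1+|x-y_0|^\theta$, and taking the suprema over $t\in[t_0-\delta,t_0]$ and $x\in\R^d$ yields \eqref{eq:phi-h-product-upper}.

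\emph{Main difficulty.} There is no genuine obstacle here: the whole lemma is a sequence of changes of variables plus the estimate already carried out in Lemma~\ref{lem:H-norm-conv}. The one point that demands attention is keeping the time localization visible throughout the upper bound, so that the \emph{small} constant $Q_\Lambda(\delta)$ --- rather than $Q_\Lambda(T)$ --- emerges from both the $h$-factor and the weighted $\phi$-factor; this smallness as $\delta\downarrow0$ is exactly what the positivity argument of Section~\ref{sec:positivity} exploits, and it is why I would re-run the computation of Lemma~\ref{lem:H-norm-conv} over the interval $[t_0-\delta,t]$ rather than quoting that lemma verbatim.
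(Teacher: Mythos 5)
Your proposal is correct and follows essentially the same route as the paper: the lower bound via $\sigma \geq \alpha$ and positivity of $G,\Lambda$, the vanishing via disjoint time supports, and the upper bound via Cauchy--Schwarz in $\H_T$ with the $h$-factor controlled by $Q_\Lambda(\delta)$ and the $\phi$-factor controlled by the linear growth of $\sigma$ together with the weighted estimate of Lemma~\ref{lem:H-norm-conv} run over the short interval $[t_0-\delta,t]$ (the paper invokes a "time translated modification" of that lemma with $T=\delta$, which is exactly your re-run of its computation). Your explicit tracking of the weight center $y_0$ and of the time localization is, if anything, slightly more careful than the paper's write-up.
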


\begin{proof}
  Recall the expressions \eqref{eq:phi-t-x} and \eqref{eq:h-delta-def} for $\phi_{t,x}$ and $h_{t_0,x_0,\delta}$ as well as the definition~\eqref{eq:H_T} of the inner product in $\H_T$. Then it is readily checked that
  \begin{align} \label{eq:phi-h-product-integral}
    &\left< \phi_{t,x}, h_{t_0,x_0,\delta} \right>_{\H_T} \nonumber\\
    &= \int_{t_0-\delta}^{t} \int_{\mathbb{R}^d} \int_{\mathbb{R}^d} G(t-s,x-y_1)\sigma(u(s,y_1))G(t-s,x_{0}-y_2)\Lambda(y_1-y_2)dy_1dy_2ds,
  \end{align}
  for all $t \in [t_0-\delta,t_0]$.
  We can now prove \eqref{eq:phi-h-product-lower} in the following way: owing to the fact that $\sigma(u(s,y_1)) > \alpha$ (see Assumption \ref{assum:sig}) and thanks to the positivity of $G$ and $\Lambda$ we obtain
  \begin{align}
    &\left<\phi_{t_0,x_0}, h_{t_0,x_0,\delta} \right>_{\H_T} \nonumber\\
    & \geq \alpha \int_{t_0-\delta}^{t_0} \int_{\mathbb{R}^d} \int_{\mathbb{R}^d} G(t_0-s, x_0-y_1) G(t_0-s,x_0-y_2)\Lambda(y_1-y_2)dy_1dy_2ds.
  \end{align}
  The last term in the above expression is $Q_\Lambda(\delta)$, which was defined in \eqref{a81}. We have thus obtained
  \begin{equation}
    \left<\phi_{t_0,x_0}, h_{t_0,x_0,\delta}  \right>_{H_T} \geq \alpha Q_\Lambda(\delta),
  \end{equation}
  that is \eqref{eq:phi-h-product-lower} holds true.

  The proof of \eqref{eq:phi-h-product-upper} is a consequence of Lemma \ref{lem:H-norm-conv}.
  Namely start from the expression~\eqref{eq:phi-h-product-integral} for the inner product
  $\langle \phi_{t_0,x_0}, h_{t_0,x_0,\delta} \rangle_{\H_{T}}$. Then notice that in the right hand side of~\eqref{eq:phi-h-product-integral},  the time integrand goes from $t_0-\delta$ to $t$. By the Cauchy Schwarz inequality, we get
 \begin{equation}\label{eq:cauchy-sch-in-initial}
\left\langle\phi_{t_0,x_0}, h_{t_0,x_0,\delta}  \right\rangle_{H_T}
\le
(R_1(t,x))^{\frac{1}{2}}(R_2(t))^{\frac{1}{2}} \, ,
\end{equation}
where we have set
\begin{multline}\label{c5}
R_1(t,x)
=
\int_{t_0-\delta}^{t} \int_{\mathbb{R}^d}\int_{\mathbb{R}^d}G(t-s,x-y_1)G(t-s,x-y_2) \\
\times \sigma(u(s,y_1))\sigma(u(s,y_2))\Lambda(y_1-y_2)dy_1dy_2
\end{multline}
and
\begin{equation}\label{c6}
R_2(t)
=
\int_{t_0-\delta}^{t} \int_{\mathbb{R}^d}\int_{\mathbb{R}^d}G(t-s,x_0-y_1)G(t-s,x_0-y_2)\Lambda(y_1-y_2)dy_1dy_2 \, .
\end{equation}
 The  term $R_2(t)$ in \eqref{c6} is easily bounded above by
 \begin{equation}\label{c7}
 R_2(t) \leq Q_\Lambda(\delta) \, .
 \end{equation}
 In fact $R_2(t)$ would be exactly $Q_\Lambda(\delta)$ if we had $t = t_0$. The term $R_1(t,x)$ defined by~\eqref{c5} can be bounded using a time translated modification of Lemma \ref{lem:H-norm-conv} with $T=\delta$ (the length of the time interval $[t_0-\delta,t_{0}]$) and $X(t,x) = \sigma(u(t,x)$. Specifically, we get
  \begin{equation*}
    \sup_{t \in [t_0-\delta,t_0]}\sup_{x \in \mathbb{R}^d} \frac{R_1(t,x)}{(1 + |x-x_0|^\theta)^2}
    \leq C Q_\Lambda(\delta) \left( \sup_{t \in [t_0-\delta,t_0]} \sup_{x \in \mathbb{R}^d} \frac{|\sigma(u(t,x))|^2}{(1 + |x-x_0|^\theta)^2} \right).
  \end{equation*}
  Since $\si$ has linear growth, we thus end up with
  \begin{equation}\label{c8}
  \sup_{t \in [t_0-\delta,t_0]}\sup_{x \in \mathbb{R}^d} \frac{R_1(t,x)}{(1 + |x-x_0|^\theta)^2}
    \leq C Q_\Lambda(\delta) \left( 1 +\sup_{t \in [t_0-\delta,t_0]}  \sup_{x \in \mathbb{R}^d}  \frac{|u(t,x)|^2}{(1 + |x-x_0|^\theta)^2} \right).
  \end{equation}
  Then \eqref{eq:phi-h-product-upper} follows by plugging \eqref{c7} and \eqref{c8} into~\eqref{eq:cauchy-sch-in-initial}.

  Finally, if $t \in [0, t_0-\delta]$ then the supports  of $\phi_{t,x}$ and $h_{t_0,x_0,\delta}$ are disjoint and therefore relation~\eqref{eq:phi-h-product-0} follows.
\end{proof}

\subsection{Upper bounds on moments of the derivative of $\mathbf{u(t,x)}$} \label{SS:Du-upper}
We have seen in \eqref{eq:u-mall-derivative} that the Malliavin derivative of $u(t,x)$ is bounded in the $L^{p}$-sense for all $p>1$. Equation~\eqref{b8} can even be seen as a quantitative bound on this derivative. In the current section we push forward this analysis to derive an upper bound for the moments of the weighted supremum norms
\begin{equation}
  \sup_{y_0 \in \mathbb{R}^d}\E \sup_{t \in [t_0-\delta,t_0]}\sup_{x \in \mathbb{R}^d}\left| \frac{|D_{h_{t_0,x_0,\delta}}u(t,x)|}{1 + |x-y_0|^\theta} \right|^p \, ,
\end{equation}
and to show that these are proportional to $(Q_\Lambda(\delta))^{p}$. As mentioned in our strategy Section~\ref{SS:positivity-strategy}, this is the contents of item \ref{c3} above.
We will use \eqref{eq:D_h=L} along with Theorem \ref{thm:M-Lip-inhomo} to prove these upper bounds. Our main estimate is contained in the following lemma.

\begin{lemma}
For $\delta> 0$ and $t_{0}\in(\delta,T]$, define a quantity $\cq_{1,\delta,p}$ by
\begin{equation}\label{c81}
\cq_{1,\delta,p}
=
\sup_{y_0 \in \mathbb{R}^d}
\E \left|\sup_{t \in [t_0-\delta,t_0]} \sup_{x \in \mathbb{R}^d} \frac{|D_{h_{t_0,x_0,\delta}}u(t,x)|}{1 + |x-y_0|^\theta} \right|^p.
\end{equation}
Then under the conditions of Theorem~\ref{thm:dens-u}, there exist a constant $C>0$, depending on $p$ and $\theta$, but not $\delta$, and a parameter $\delta_0>0$ such that for all $\delta \in (0, \delta_0)$,
\begin{equation} \label{eq:Dh-u-moment-upper-bound}
    \cq_{1,\delta,p} \leq  C(Q_\Lambda(\delta))^p.
  \end{equation}
\end{lemma}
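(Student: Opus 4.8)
The plan is to bootstrap the fixed-point structure encoded in \eqref{eq:D_h=L}. Recall that $D_{h_{t_0,x_0,\delta}}u = \mathcal{L}\big(\langle\phi_{\cdot,\cdot},h_{t_0,x_0,\delta}\rangle_{\H_T} + B_{h_{t_0,x_0,\delta}}(\cdot,\cdot)\big)$, where $\mathcal{L}$ is associated to $\varphi(s,y,V) = f'(u(s,y))V$; this $\varphi$ is half-Lipschitz in $V$ with constant $\kappa$, and for every fixed $\omega$ it satisfies the growth bound \eqref{eq:varphi-growth-restriction} because $u \in \cac_{\theta,y_0}$ and $\theta\nu < 2$. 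Since $D_{h_{t_0,x_0,\delta}}u(t,x)$ vanishes for $t \le t_0-\delta$, the equation \eqref{eq:Dhu-decomposed} reduces on $[t_0-\delta,t_0]$ to an integral equation with zero data at $t_0-\delta$, so a time-translated version of Theorem \ref{thm:M-Lip-inhomo} applied on that interval (of length $\delta$) — using $\mathcal{L}(0)\equiv 0$ and that the Lipschitz constant depends only on $\kappa,\theta$ and the horizon — gives, for $\delta\le\delta_0$, almost surely and for every $y_0$,
\[
\sup_{t\in[t_0-\delta,t_0]}\sup_{x\in\mathbb{R}^d}\frac{|D_{h_{t_0,x_0,\delta}}u(t,x)|}{1+|x-y_0|^\theta}
\le K_0\sup_{t\in[t_0-\delta,t_0]}\sup_{x\in\mathbb{R}^d}\frac{|\langle\phi_{t,x},h_{t_0,x_0,\delta}\rangle_{\H_T}| + |B_{h_{t_0,x_0,\delta}}(t,x)|}{1+|x-y_0|^\theta},
\]
where $K_0 = K(\delta_0,\theta,\kappa)$ bounds the Lipschitz constant uniformly in $\delta\le\delta_0$ and is independent of $t_0,x_0,y_0$. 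Raising to the power $p$, taking expectations, Minkowski's inequality and $\sup_{y_0}$ then yield $\cq_{1,\delta,p}^{1/p}\le K_0(I_\delta^{1/p}+J_\delta^{1/p})$, with $I_\delta,J_\delta$ the $p$-th moments of the weighted suprema of the two terms on the right.

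Next I would bound the two terms. For $I_\delta$, inequality \eqref{eq:phi-h-product-upper} of Lemma \ref{lem:phi-h} bounds the weighted supremum of $|\langle\phi_{t,x},h_{t_0,x_0,\delta}\rangle_{\H_T}|$ by $CQ_\Lambda(\delta)\big(1+\sup_s\sup_y|u(s,y)|/(1+|y-y_0|^\theta)\big)$; taking $p$-th moments and then $\sup_{y_0}$, and using that the limit $u$ inherits from \eqref{eq:u_n-conv} (via Fatou) the bound $\sup_{y_0}\E\big|\sup_t\sup_x|u(t,x)|/(1+|x-y_0|^\theta)\big|^p =: C_u^p<+\infty$, gives $I_\delta^{1/p}\le CQ_\Lambda(\delta)(1+C_u)$. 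For $J_\delta$, the term $B_{h_{t_0,x_0,\delta}}$ is the stochastic convolution of the real-valued adapted integrand $\psi_\delta(s,y) = \sigma'(u(s,y))D_{h_{t_0,x_0,\delta}}u(s,y)\mathbbm{1}_{[t_0-\delta,t_0]}(s)$, with time running over an interval of length $\delta$; the real-valued version of Lemma \ref{lem:weighted-space-Hilbert} (i.e.\ Theorem 4.3 of \cite{s-2021}), applied in a time-translated form on horizon $\delta$, gives $J_\delta\le C_{\delta,p,\theta}\sup_s\sup_{y_0}\E|\psi_\delta(s,y_0)|^p$. Since $|\sigma'|\le L$ by Assumption \ref{assum:sig} and $\sup_s\sup_{y_0}\E|D_{h_{t_0,x_0,\delta}}u(s,y_0)|^p\le\cq_{1,\delta,p}$ (put $x=y_0$ in the weighted supremum), this is $\le L^pC_{\delta,p,\theta}\cq_{1,\delta,p}$, and moreover $C_{\delta,p,\theta}\to 0$ as $\delta\to 0$ by Lemma \ref{lem:weighted-space-Hilbert}.

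Combining the pieces,
\[
\cq_{1,\delta,p}^{1/p}\le K_0C(1+C_u)\,Q_\Lambda(\delta) + K_0L\,C_{\delta,p,\theta}^{1/p}\,\cq_{1,\delta,p}^{1/p}.
\]
Here $\cq_{1,\delta,p}$ is finite a priori, since $|D_{h_{t_0,x_0,\delta}}u(t,x)|\le|h_{t_0,x_0,\delta}|_{\H_T}|Du(t,x)|_{\H_T}$ with $|h_{t_0,x_0,\delta}|_{\H_T}^2=Q_\Lambda(\delta)<\infty$, and \eqref{eq:u-mall-derivative} holds. Choosing $\delta_0>0$ small enough that $K_0L\,C_{\delta,p,\theta}^{1/p}\le\tfrac12$ for all $\delta\in(0,\delta_0)$ (possible because $C_{\delta,p,\theta}\to 0$), and also small enough that the Lipschitz-constant bound above is valid, one absorbs the last term into the left-hand side to obtain $\cq_{1,\delta,p}^{1/p}\le 2K_0C(1+C_u)Q_\Lambda(\delta)$, i.e.\ \eqref{eq:Dh-u-moment-upper-bound} with a constant depending only on $p,\theta$ — the $t_0$-independence following from the time-homogeneity of $W$ and the $x_0,y_0$-independence from the corresponding uniformity in Theorem \ref{thm:M-Lip-inhomo}, Lemma \ref{lem:phi-h}, and Theorem 4.3 of \cite{s-2021}. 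The main obstacle is precisely this self-referential estimate: $B_{h_{t_0,x_0,\delta}}$ contains $D_{h_{t_0,x_0,\delta}}u$ itself, so one must first establish $\cq_{1,\delta,p}<+\infty$ and then exploit the smallness $C_{\delta,p,\theta}\to0$ to close the loop, which forces one to work on the short horizon $\delta$ rather than the fixed $T$ and to track carefully that none of $K_0$, $C_u$, or the constants of Lemma \ref{lem:phi-h} depend on $\delta$.
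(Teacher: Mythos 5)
Your proposal is correct and follows essentially the same route as the paper: represent $D_{h_{t_0,x_0,\delta}}u=\mathcal{L}(\langle\phi,h_{t_0,x_0,\delta}\rangle_{\H_T}+B_{h_{t_0,x_0,\delta}})$, invoke Theorem~\ref{thm:M-Lip-inhomo}, bound the inner-product term by $Q_\Lambda(\delta)$ times moments of $u$ via \eqref{eq:phi-h-product-upper}, bound the stochastic-convolution term by $C_{\delta,p,\theta}\,\cq_{1,\delta,p}$ with $C_{\delta,p,\theta}\to0$, and absorb for small $\delta$. The only differences are cosmetic (Minkowski with $1/p$-th roots instead of $p$-th powers directly) plus a slightly more explicit justification of the a priori finiteness of $\cq_{1,\delta,p}$, which the paper obtains from Theorem~\ref{thm:u-mall-diff}.
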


\begin{proof}
First we recall our upper bounds on the weighted supremum norms of the stochastic integral $B_{h_{t_0,x_0,\delta}}(t,x)$, where we recall that
$B_{h_{t_0,x_0,\delta}}(t,x)$ is defined by \eqref{eq:B_h-delta-interval}. Notice that
\begin{equation*}
B_{h_{t_0,x_0,\delta}}(t,x)=\ci^{\vp}(t,x),
\quad\text{with}\quad
\vp(s,y)=\sigma'(u(s,y))D_{h_{t_0,x_0,\delta}}u(s,y) \, ,
\end{equation*}
where the notation $\ci^{\vp}$ comes from \eqref{eq:I-integral-def}. If both $p>1$ and $\theta>0$ are sufficiently large, then inequality~\eqref{eq:weighted-Lp-Hilbert}  guarantees that there exist $C_{\delta,p,\theta}$ such that $\lim_{\delta \to 0} C_{\delta,p,\theta}=0$ and such that
\begin{equation} \label{eq:B_h-uppper-bound}
  \sup_{y_0 \in \mathbb{R}^d} \E \left|\sup_{t \in [t_0-\delta,t_0]} \sup_{x \in \mathbb{R}^d}
  \frac{|B_{h_{t_0,x_0,\delta}}(t,x)|}{1 + |x-y_0|^\theta} \right|^p \leq C_{\delta,p,\theta} \, \cq_{1,\delta,p} \, ,
\end{equation}
where we have just defined $\cq_{1,\delta,p}$ in \eqref{c81}.
We know that the right-hand-side of \eqref{eq:B_h-uppper-bound} is finite because of Theorem \ref{thm:u-mall-diff}.
In addition,
because $D_{h_{t_0,x_0,\delta}}u = \mathcal{L} ( \langle \phi , \, h_{t_0,x_0,\delta} \rangle_{\H_T}  + B_{h_{t_0,x_0,\delta}})$,  Theorem \ref{thm:M-Lip-inhomo} guarantees that there exists $C>0$ such that for all $\delta \in [0,1]$,
\begin{multline}\label{c9}
  \sup_{y_0 \in \mathbb{R}^d} \E \left|\sup_{t \in [t_0-\delta,t_0]} \sup_{x \in \mathbb{R}^d} \frac{|D_{h_{t_0,x_0,\delta}}u(t,x)|}{1 + |x-y_0|^\theta} \right|^p
  \leq  C\sup_{y_0 \in \mathbb{R}^d} \E \left|\sup_{t \in [t_0-\delta,t_0]} \sup_{x \in \mathbb{R}^d} \frac{|\left<\phi_{t,x}, h_{t_0,x_0,\delta}\right>_{\H_T} |}{1 + |x-y_0|^\theta} \right|^p  \\
+ C\sup_{y_0 \in \mathbb{R}^d} \E \left|\sup_{t \in [t_0-\delta,t_0]} \sup_{x \in \mathbb{R}^d} \frac{|B_{h_{t_0,x_0,\delta}}(t,x)|}{1 + |x-y_0|^\theta} \right|^p.
\end{multline}
Then plugging \eqref{eq:phi-h-product-upper} and \eqref{eq:B_h-uppper-bound} into the right hand side of~\eqref{c9}, we get
\begin{equation}\label{c10}
\sup_{y_0 \in \mathbb{R}^d} \E \left|\sup_{t \in [t_0-\delta,t_0]}
\sup_{x \in \mathbb{R}^d} \frac{|D_{h_{t_0,x_0,\delta}}u(t,x)|}{1 + |x-y_0|^\theta}
\right|^p
\le
C_{\delta, p,\theta} \cq_{1,\delta,p} + C(Q_\Lambda(\delta))^p  \lp 1+  \cq_{2,\delta,p} \rp  \, ,
\end{equation}
where we have set
\begin{equation}\label{c11}
\cq_{2,\delta,p}
=
\sup_{y_0 \in \mathbb{R}^d} \E \left|\sup_{t \in [t_0-\delta,t_0]} \sup_{x \in \mathbb{R}^d} \frac{|u(t,x)|}{1 + |x-y_0|^\theta} \right|^p  \, .\end{equation}
Observe that as an easy consequence of \eqref{eq:u_n-conv}, we have
$ \cq_{2,\delta,p}<+\infty$. Furthermore, recall that $\lim_{\delta \to 0} C_{\delta,p,\theta}=0$ in equation~\eqref{eq:B_h-uppper-bound}. Hence there exists $\delta_0>0$ small enough so that $C_{\delta,p,\theta}< \frac{1}{2}$. The left hand side of~\eqref{c10} being also of the form $\cq_{1,\delta,p}$ according to our definition~\eqref{c81}, for all $\delta \in (0,\delta_0)$ we obtain that there exists a constant $C$ such that
\begin{equation}
  \sup_{y_0 \in \mathbb{R}^d} \E \left|\sup_{t \in [t_0-\delta,t_0]} \sup_{x \in \mathbb{R}^d} \frac{|D_{h_{t_0,x_0,\delta}}u(t,x)|}{1 + |x-y_0|^\theta} \right|^p \leq  C(Q_\Lambda(\delta))^p.
\end{equation}
This proves our claim \eqref{eq:Dh-u-moment-upper-bound}.
\end{proof}

\subsection{Almost sure upper bounds on the integrals $\mathbf{A}$ and $\mathbf{B}$} \label{SS:integrals-upper}
In this section, we will prove that with probability one, $A_{h_{t_0,x_0,\delta}}(t_0,x_0)$ and $B_{h_{t_0,x_0,\delta}}(t_0,x_0)$ converge to zero much faster than $Q_\Lambda(\delta)$ as $\delta \downarrow 0$. These results, along with \eqref{eq:phi-h-product-lower}, will enable us to prove that with probability one there exists $\delta>0$ such that $D_{h_{t_0,x_0,\delta}}u(t_0,x_0)>0$.

As a first step, we show that   ${|B_{h_{t_0,x_0,\delta}}(t_0,x_0)| }/{Q_\Lambda(\delta)}$  converges to zero in probability.

\begin{lemma} \label{lem:B_h-in-prob}
  Let $(t_{0}, x_{0})\in(0,T]\times\R^{d}$, and recall that the term $B_{h_{t_0,x_0,\delta}}(t_{0}, x_{0})$ is defined by~\eqref{eq:B_h-delta-interval}.
  We work under the conditions of Theorem~\ref{thm:dens-u}. Then for any $\e>0$,
  \begin{equation}\label{d1}
    \lim_{\delta \to 0}\Pro \left(  \frac{|B_{h_{t_0,x_0,\delta}}(t_0,x_0)| }{Q_\Lambda(\delta)} >\e \right)=0 \, .
  \end{equation}
\end{lemma}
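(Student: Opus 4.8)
The strategy is to bound a high moment of $B_{h_{t_0,x_0,\delta}}(t_0,x_0)$ by a quantity proportional to $(Q_\Lambda(\delta))^p$ times a constant that tends to zero as $\delta\downarrow 0$, and then conclude by Markov's inequality. Throughout we fix $p$ large enough that Lemma~\ref{lem:weighted-space-Hilbert} applies with the given $\theta$, that is $p>\frac{2(d+1)}{\eta}$ and $\theta>\frac{d+1}{p-(d+1)}$; this is possible by taking $p$ sufficiently large.

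Recall from~\eqref{eq:B_h-delta-interval} that $B_{h_{t_0,x_0,\delta}}(t,x)=\ci^{\vp}(t,x)$ with $\vp(s,y)=\sigma'(u(s,y))\,D_{h_{t_0,x_0,\delta}}u(s,y)$, and that $\vp(s,y)=0$ for $s\notin[t_0-\delta,t_0]$. Applying the time–translated version of Lemma~\ref{lem:weighted-space-Hilbert} on the interval $[t_0-\delta,t_0]$ of length $\delta$, and using that $\sigma'$ is uniformly bounded by Assumption~\ref{assum:sig}, we obtain a constant $C_{\delta,p,\theta}$ with $\lim_{\delta\to 0}C_{\delta,p,\theta}=0$ such that, exactly as in~\eqref{eq:B_h-uppper-bound},
\begin{equation}\label{d2}
  \sup_{y_0\in\R^d}\E\left|\sup_{t\in[t_0-\delta,t_0]}\sup_{x\in\R^d}\frac{|B_{h_{t_0,x_0,\delta}}(t,x)|}{1+|x-y_0|^\theta}\right|^p\leq C_{\delta,p,\theta}\,\cq_{1,\delta,p}.
\end{equation}
In particular, taking $y_0=x_0$ in~\eqref{d2} and restricting the supremum to the single point $(t_0,x_0)$ gives $\E|B_{h_{t_0,x_0,\delta}}(t_0,x_0)|^p\leq C_{\delta,p,\theta}\,\cq_{1,\delta,p}$. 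Combining this with the bound $\cq_{1,\delta,p}\leq C(Q_\Lambda(\delta))^p$ from~\eqref{eq:Dh-u-moment-upper-bound}, we conclude that there is a constant $C>0$ and a $\delta_0>0$ such that for all $\delta\in(0,\delta_0)$,
\begin{equation}\label{d3}
  \E\left[|B_{h_{t_0,x_0,\delta}}(t_0,x_0)|^p\right]\leq C\,C_{\delta,p,\theta}\,(Q_\Lambda(\delta))^p.
\end{equation}

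Finally, for any $\e>0$, Markov's inequality together with~\eqref{d3} yields
\begin{equation}\label{d4}
  \Pro\left(\frac{|B_{h_{t_0,x_0,\delta}}(t_0,x_0)|}{Q_\Lambda(\delta)}>\e\right)\leq \frac{\E\left[|B_{h_{t_0,x_0,\delta}}(t_0,x_0)|^p\right]}{\e^p\,(Q_\Lambda(\delta))^p}\leq \frac{C\,C_{\delta,p,\theta}}{\e^p}.
\end{equation}
Since $C_{\delta,p,\theta}\to 0$ as $\delta\to 0$ by Lemma~\ref{lem:weighted-space-Hilbert}, the right–hand side of~\eqref{d4} tends to zero, which establishes~\eqref{d1}. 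There is no real obstacle here beyond collecting the already–established weighted moment estimates; the only point requiring a little care is the choice of $p$ large enough that Lemma~\ref{lem:weighted-space-Hilbert} is applicable with the fixed weight exponent $\theta$, and the verification that the constant $C_{\delta,p,\theta}$ genuinely vanishes as $\delta\downarrow 0$, which is precisely the content of the last assertion of that lemma applied with $T=\delta$.
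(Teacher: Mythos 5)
Your proposal is correct and follows essentially the same route as the paper: both bound $\E|B_{h_{t_0,x_0,\delta}}(t_0,x_0)|^p$ via the weighted moment estimate \eqref{eq:B_h-uppper-bound} combined with \eqref{eq:Dh-u-moment-upper-bound}, and then conclude by Chebyshev/Markov using $\lim_{\delta\to 0}C_{\delta,p,\theta}=0$. Your added remarks on choosing $p$ large enough for Lemma~\ref{lem:weighted-space-Hilbert} are a harmless elaboration of what the paper leaves implicit.
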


\begin{proof}
We start by applying  Chebyshev inequality in order to get
\begin{equation*}
\Pro \left(  \frac{|B_{h_{t_0,x_0,\delta}}(t_0,x_0)| }{Q_\Lambda(\delta)} >\e \right)
\leq
\frac{\E|B_{h_{t_0,x_0,\delta}}(t_0,x_0)|^p}{\e^p(Q_\Lambda(\delta))^p} \, .
\end{equation*}
 Next applying successively \eqref{eq:B_h-uppper-bound} and \eqref{eq:Dh-u-moment-upper-bound} we easily get that
 \begin{equation*}
\Pro \left(  \frac{|B_{h_{t_0,x_0,\delta}}(t_0,x_0)| }{Q_\Lambda(\delta)} >\e \right)
\leq
\frac{C_{\delta,p,\theta}}{\e^p(Q_\Lambda(\delta))^p} \cq_{1,\delta,p}
\leq \frac{C}{\e^p} C_{\delta,p,\theta} \, ,
\end{equation*}
where $C_{\delta,p,\theta}$ satisfies $\lim_{\delta\to 0}C_{\delta,p,\theta} = 0$. We have thus achieved~\eqref{d1}, which finishes the proof.
\end{proof}

Let us state a corollary to Lemma \ref{lem:B_h-in-prob} which will be important in our arguments towards positivity of the Malliavin derivative. Its proof derives from standard tools in probability theory and is omitted for sake of conciseness.

\begin{corollary}\label{cor:B-almost-sure}
Let the assumptions of Lemma \ref{lem:B_h-in-prob} prevail. Then for any sequence $\delta_k \downarrow 0$, there exists a subsequence $\delta_{k_i}$ on which ${|B_{h_{t_0,x_0,\delta_{k_i}}}(t_0,x_0)| }/{Q_\Lambda(\delta_{k_i})}$ converges to zero almost surely.
\end{corollary}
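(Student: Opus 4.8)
The plan is to deduce the almost sure statement from the convergence in probability established in Lemma~\ref{lem:B_h-in-prob} via the classical subsequence-extraction argument; the proof is accordingly very short.

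First I would fix an arbitrary sequence $\delta_k\downarrow 0$ and set
\[
  X_k := \frac{|B_{h_{t_0,x_0,\delta_k}}(t_0,x_0)|}{Q_\Lambda(\delta_k)}, \qquad k \ge 1 .
\]
These are genuine random variables: $Q_\Lambda(\delta_k)>0$ for every $k$ because $Q_\Lambda(\delta)>0$ as soon as $\delta>0$ (recall \eqref{a821} and the fact that under Assumption~\ref{assum:dalang} the spectral measure $\mu$ is a non-trivial positive measure). Since $\delta_k\to 0$, Lemma~\ref{lem:B_h-in-prob} applied along this particular sequence gives $\lim_{k\to\infty}\Pro(X_k>\e)=0$ for every $\e>0$, i.e. $X_k\to 0$ in probability.

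Next I would carry out the standard extraction. Because $\Pro(X_k>2^{-i})\to 0$ as $k\to\infty$ for each fixed $i$, I can choose indices $k_1<k_2<\cdots$ inductively such that $\Pro(X_{k_i}>2^{-i})\le 2^{-i}$ for every $i\ge 1$. Then $\sum_{i\ge 1}\Pro(X_{k_i}>2^{-i})<\infty$, so the Borel--Cantelli lemma yields $\Pro\big(\limsup_{i\to\infty}\{X_{k_i}>2^{-i}\}\big)=0$. On the complement of this null event one has $X_{k_i}\le 2^{-i}$ for all sufficiently large $i$, hence $X_{k_i}\to 0$ almost surely. This is precisely the assertion of the corollary, with the subsequence $(\delta_{k_i})_i$.

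No genuine obstacle arises here: the only nontrivial input is Lemma~\ref{lem:B_h-in-prob}, and everything else is the textbook fact that a sequence converging in probability admits an almost surely convergent subsequence, which could also be invoked directly in place of the Borel--Cantelli computation. The only point requiring a word of justification is that the quotients $X_k$ are well defined, which, as noted above, follows from the strict positivity of $Q_\Lambda(\delta)$ for $\delta>0$.
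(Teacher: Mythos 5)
Your proof is correct and is exactly the standard argument the paper has in mind: the paper omits the proof of this corollary as "derived from standard tools," namely the fact that convergence in probability (here supplied by Lemma~\ref{lem:B_h-in-prob}) yields an almost surely convergent subsequence via Borel--Cantelli. Your extra remark on the strict positivity of $Q_\Lambda(\delta)$, ensuring the quotients are well defined, is a sensible point of care.
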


The Lebesgue integral $A_{h_{t_0,x_0,\delta}}$ is more difficult to  analyze because of the presence of the $f'(u(t,x))$ term in \eqref{eq:A_h-delta-interval}. Assumption \ref{assum:f} limits the growth to $|f'(u)| \leq K \exp(K|u|^\nu)$, but because we only have moment estimates on $\sup_{t \in [0,T]}\sup_{x \in \mathbb{R}^d} \frac{|u(t,x)|}{1 + |x-x_0|^\theta}$, there is no reason to expect to have moment estimates on $\E|f'(u(t,x))|^p$. On the other hand, the bounds on $f'(u)$ guarantee that the integral defined in \eqref{eq:A_h-delta-interval} is convergent with probability one as long as $\theta \nu < 2$. Before controlling the size of $A_{h_{t_0,x_0,\delta}}$, let us thus state
an estimate on $D_{h_{t_0,x_0,\delta}}u(t,x)$ that holds with probability one.

\begin{lemma} \label{lem:Dhu-as-conv}
For $\delta>0$ and $(t_{0}, x_{0})\in(0,T]\times\R^{d}$, let $h_{t_0,x_0,\delta}$ be defined by~\eqref{eq:h-delta-def-1}. We suppose that  the assumptions of Theorem~\ref{thm:dens-u} hold.
Consider the sequence $\{\delta_{k}; \, k\ge 1\}$ defined by $\delta_{k}=2^{-k}$. Then we have
  \begin{equation}\label{d2}
    \Pro \left( \lim_{k \to \infty} \sup_{t \in [t_0-\delta_k,t_0]} \sup_{x \in \mathbb{R}^d} \frac{\delta_k^{\frac{1}{2}}
    |D_{h_{t_0,x_0,\delta_k}} u(t,x)|}{\lp 1 + |x-x_0|^\theta\rp Q_\Lambda(\delta_k)} = 0  \right) =1.
  \end{equation}
\end{lemma}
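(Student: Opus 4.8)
The plan is to deduce the almost sure statement \eqref{d2} from the moment bound \eqref{eq:Dh-u-moment-upper-bound} by a Borel--Cantelli argument along the geometric sequence $\delta_k = 2^{-k}$. First fix a power $p$ large enough that the hypotheses of the previous lemma are met --- in particular, $p$ satisfies the constraints linking $p,\theta,\eta$ and $d$ coming from Lemma \ref{lem:weighted-space-Hilbert}, and we also require $p>2$ --- and let $\delta_0>0$ be the associated threshold in \eqref{eq:Dh-u-moment-upper-bound}. For $k$ large enough that $\delta_k< \min(t_0,\delta_0)$, introduce the random variable
\[
\Xi_k := \sup_{t\in[t_0-\delta_k,t_0]}\sup_{x\in\R^d}\frac{\delta_k^{1/2}\,|D_{h_{t_0,x_0,\delta_k}}u(t,x)|}{\lp1+|x-x_0|^\theta\rp Q_\Lambda(\delta_k)}\, .
\]
Relation \eqref{d2} is precisely the statement that $\Xi_k\to 0$ almost surely.

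The key estimate is a bound on $\E|\Xi_k|^p$. Choosing $y_0=x_0$ in the definition \eqref{c81} of $\cq_{1,\delta,p}$ and invoking \eqref{eq:Dh-u-moment-upper-bound} with $\delta=\delta_k<\delta_0$, we obtain
\[
\E|\Xi_k|^p
= \frac{\delta_k^{p/2}}{(Q_\Lambda(\delta_k))^p}\,
\E\left|\sup_{t\in[t_0-\delta_k,t_0]}\sup_{x\in\R^d}\frac{|D_{h_{t_0,x_0,\delta_k}}u(t,x)|}{1+|x-x_0|^\theta}\right|^p
\le \frac{\delta_k^{p/2}}{(Q_\Lambda(\delta_k))^p}\,\cq_{1,\delta_k,p}
\le C\,\delta_k^{p/2}
= C\,2^{-kp/2}\, ,
\]
where $C$ depends on $p$ and $\theta$ but not on $k$.

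Now Markov's inequality gives, for any $\e>0$ and all $k$ with $\delta_k<\min(t_0,\delta_0)$,
\[
\Pro\lp \Xi_k > \e \rp \le \frac{\E|\Xi_k|^p}{\e^p}\le \frac{C}{\e^p}\,2^{-kp/2}\, .
\]
Since $p>2$, the series $\sum_k 2^{-kp/2}$ converges, so $\sum_k \Pro(\Xi_k>\e)<+\infty$ and the Borel--Cantelli lemma yields $\Pro(\Xi_k>\e \text{ for infinitely many } k)=0$. Taking the intersection of these events over $\e=1/m$, $m\in\nat$, we conclude that $\limsup_{k\to\infty}\Xi_k=0$ with probability one, which is exactly \eqref{d2}. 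The argument is essentially routine once \eqref{eq:Dh-u-moment-upper-bound} is available; the only minor points requiring attention are that the moment bound holds only for $\delta<\delta_0$ (harmless, as Borel--Cantelli is insensitive to finitely many indices) and that $p$ must be chosen large enough to be simultaneously compatible with all the standing constraints on $p$, $\theta$, $\eta$ and $d$, which is possible under Assumptions \ref{assum:dalang}--\ref{assum:f}.
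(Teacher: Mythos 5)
Your argument is correct and follows essentially the same route as the paper: both proofs combine the moment bound \eqref{eq:Dh-u-moment-upper-bound} with a Chebyshev/Markov estimate and the Borel--Cantelli lemma along the geometric sequence $\delta_k=2^{-k}$ (the paper uses the shrinking threshold $\delta_k^{1/4}$ where you use a fixed $\e$ followed by intersection over $\e=1/m$, a purely cosmetic difference). Note only that $\sum_k 2^{-kp/2}$ converges for any $p>0$, so the restriction $p>2$ is not actually needed for summability.
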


\begin{remark}
Notice that there is an extra $\delta_k^{\frac{1}{2}}$ in the numerator of the expression in~\eqref{d2}. This extra factor will help all of this converge to zero.
\end{remark}

\begin{proof}[Proof of Lemma \ref{lem:Dhu-as-conv}]
  Recall that ${\delta}_k = 2^{-k}$. Like in the proof of Lemma~\ref{lem:B_h-in-prob}, we first apply  Chebyshev's inequality. Recalling our notation~\eqref{c81} for  $\cq_{1,\delta,p}$, we get
  \begin{equation*}
\Pro \left( \sup_{t \in [t_0-\delta_k,t_0]} \sup_{x \in \mathbb{R}^d}
\frac{ \delta_k^{\frac{1}{2}}|D_{h_{t_0,x_0,\delta_k}} u(t,x)|}{(1 + |x-x_0|^\theta )Q_\Lambda(\delta_k)}
>  \delta_k^{\frac{1}{4}}\right)
\le
\frac{\delta_{k}^{p/2} \cq_{1,\delta,p}}{Q_\Lambda(\delta_k)^{p} \delta_{k}^{p/4}} \, .
\end{equation*}
Therefore due to \eqref{eq:Dh-u-moment-upper-bound} we have
  \begin{equation}
    \Pro \left( \sup_{t \in [t_0-\delta_k,t_0]} \sup_{x \in \mathbb{R}^d} \frac{ \delta_k^{\frac{1}{2}}|D_{h_{t_0,x_0,\delta_k}} u(t,x)|}{(1 + |x-x_0|^\theta )Q_\Lambda(\delta_k)} >  \delta_k^{\frac{1}{4}}\right)
    \leq  C \delta_k^{\frac{p}{4}} \leq 2^{-\frac{pk}{4}}.
  \end{equation}
  By the Borel-Cantelli Lemma, with probability one there exists $K(\omega)$ such that for all $k\geq K(\omega)$
  \begin{equation}
     \sup_{t \in [t_0-\delta_k,t_0]} \sup_{x \in \mathbb{R}^d} \frac{ \delta_k^{\frac{1}{2}}|D_{h_{t_0,x_0,\delta_k}} u(t,x)|}{1 + |x-x_0|^\theta Q_\Lambda(\delta_k)} \leq  \delta_k^{\frac{1}{4}},
  \end{equation}
  implying  that
  \begin{equation}
    \Pro \left( \lim_{k \to \infty} \sup_{t \in [t_0-\delta_k,t_0]} \sup_{x \in \mathbb{R}^d} \frac{\delta_k^{\frac{1}{2}}|D_{h_{t_0,x_0,\delta_k}} u(t,x)|}{1 + |x-x_0|^\theta Q_\Lambda(\delta_k)} = 0  \right) =1.
  \end{equation}
This achieves the proof of \eqref{d2}.
\end{proof}

With this intermediate result on the behavior of $Du(t,x)$, we can now state a lemma estimating the integral term $A$.

\begin{lemma} \label{lem:A_h-a.s.}
Recall that we have set $\delta_{k}=2^{-k}$ for $k\ge 1$. The integral $A_{h_{t_0,x_0,\delta_k}}(t_0,x_0)$ is defined by~\eqref{eq:A_h-delta-interval}, that is
\begin{equation}\label{d3}
  A_{h_{t_0,x_0,\delta_k}}(t_0,x_0) = \int_{t_0-\delta}^{t_0} \int_{\mathbb{R}^d} G(t_0-s,x_0-y) f'(u(s,y)) D_{h_{t_0,x_0,\delta_k}}u(s,y)dyds.
\end{equation}
Our assumptions are those of Theorem~\ref{thm:dens-u}.
  Then we have
  \begin{equation}\label{d31}
    \Pro \left(\lim_{k \to \infty} \frac{|A_{h_{t_0,x_0,\delta_k}}(t_0,x_0)|}{Q_\Lambda(\delta_k)} = 0 \right)=1.
  \end{equation}
\end{lemma}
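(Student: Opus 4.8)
The plan is to combine the almost-sure control of $D_{h_{t_0,x_0,\delta_k}}u$ supplied by Lemma~\ref{lem:Dhu-as-conv} with the exponential growth bound \eqref{eq:f-exp-bound} on $f'$ and the fact that $u$ lies almost surely in the weighted space $\cac_{\theta,x_0}([0,T]\times\R^{d})$, the essential point being the standing restriction $\theta\nu<2$. Note first that $Q_\Lambda(\delta_k)>0$, since $\mu$ is a nonzero measure, so the quotients in \eqref{d31} are well defined.

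First I would record, from Lemma~\ref{lem:Dhu-as-conv}, the existence of a random sequence $\eta_k=\eta_k(\om)\to 0$ a.s.\ with
\[
  |D_{h_{t_0,x_0,\delta_k}}u(s,y)| \le \eta_k\,\delta_k^{-1/2}\,(1+|y-x_0|^\theta)\,Q_\Lambda(\delta_k),
  \qquad s\in[t_0-\delta_k,t_0],\ y\in\R^{d}.
\]
Next, since $u\in\cac_{\theta,x_0}([0,T]\times\R^{d})$ almost surely, the random constant $C_u(\om):=\sup_{s\in[0,T]}\sup_{y\in\R^{d}}\frac{|u(s,y)|}{1+|y-x_0|^\theta}$ is a.s.\ finite, and \eqref{eq:f-exp-bound} together with the elementary inequality $(1+t^\theta)^\nu\le c_\nu(1+t^{\theta\nu})$ yields a.s.\ finite random constants $\tilde K(\om),a(\om)$ with
\[
  |f'(u(s,y))|\le \tilde K(\om)\,\exp\!\big(a(\om)\,|y-x_0|^{\theta\nu}\big), \qquad s\in[0,T],\ y\in\R^{d}.
\]
Plugging both bounds into the definition \eqref{d3} of $A_{h_{t_0,x_0,\delta_k}}(t_0,x_0)$ and pulling out the $y$-independent factors reduces everything to estimating $\int_{\R^{d}}G(t_0-s,x_0-y)\,e^{a(\om)|y-x_0|^{\theta\nu}}(1+|y-x_0|^\theta)\,dy$.

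For this integral I would use the Gaussian scaling $y-x_0=\sqrt{t_0-s}\,w$: after the change of variables the exponent becomes $a(\om)(t_0-s)^{\theta\nu/2}|w|^{\theta\nu}$, and since $\theta\nu/2<1$ the prefactor $(t_0-s)^{\theta\nu/2}$ stays bounded for $t_0-s\in(0,T]$, while $\theta\nu<2$ ensures that the Gaussian weight $e^{-|w|^2/2}$ dominates $e^{(\mathrm{const})|w|^{\theta\nu}}$; hence the integral is bounded, uniformly in $s\in[0,t_0]$, by an a.s.\ finite constant $C''(\om)$. Integrating over $s\in[t_0-\delta_k,t_0]$ contributes only a factor $\delta_k$, so altogether
\[
  \frac{|A_{h_{t_0,x_0,\delta_k}}(t_0,x_0)|}{Q_\Lambda(\delta_k)}
  \le \eta_k\,\delta_k^{1/2}\,\tilde K(\om)\,C''(\om)\ \longrightarrow\ 0 \quad\text{a.s.},
\]
which is \eqref{d31}.

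I expect the main obstacle to be the second step, namely controlling $f'(u(s,y))$ simultaneously for all $y\in\R^{d}$: $f'$ may grow as fast as $\exp(K|u|^\nu)$, whereas $u$ is controlled only through the polynomial weight $1+|y-x_0|^\theta$, so $f'(u(s,\cdot))$ can itself grow like $\exp(\mathrm{const}\cdot|y-x_0|^{\theta\nu})$. This is exactly where the restriction $\theta\nu<2$ is indispensable, since it is what allows the Gaussian decay of the heat kernel to absorb that growth. The remainder of the argument is routine; the extra $\delta_k^{1/2}$ furnished by Lemma~\ref{lem:Dhu-as-conv} is more than needed, as the length of the integration interval already provides a gain of order $\delta_k$.
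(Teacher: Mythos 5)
Your proposal is correct and follows essentially the same route as the paper's proof: both combine the a.s.\ weighted bound on $u$, the exponential growth bound \eqref{eq:f-exp-bound} on $f'$, and the a.s.\ estimate on $D_{h_{t_0,x_0,\delta_k}}u$ from Lemma~\ref{lem:Dhu-as-conv}, then let the Gaussian kernel absorb the $e^{\,c|y-x_0|^{\theta\nu}}$ growth (possible precisely because $\theta\nu<2$) and harvest the factor $\delta_k$ from the length of the time interval. The only cosmetic difference is that you absorb the exponential via the scaling $y-x_0=\sqrt{t_0-s}\,w$ while the paper uses Young's inequality with exponents $\tfrac{2}{\nu\theta}$ and $\tfrac{2}{2-\nu\theta}$; these are equivalent elementary manipulations.
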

\begin{proof}
We have seen that the random variable $\cq_{2,\delta,p}$ defined by~\eqref{c11} is such that $\E[\cq_{2,\delta,p}]<\infty$, as an easy consequence of~\eqref{eq:u_n-conv}. Hence with probability one there exists a (random) constant $M(\omega)$ such that
\begin{equation}\label{d4}
  \sup_{t \in [0,t_0]} \sup_{x \in \mathbb{R}^d} \frac{|u(t,x)|}{1 + |x-x_0|^\theta} \leq M(\omega).
\end{equation}
Furthermore, by possibly increasing the value of $M(\omega)$,   Lemma \ref{lem:Dhu-as-conv} guarantees that choosing $\delta_{k}=2^{-k}$, with probability one we have
\begin{equation}\label{d5}
  \sup_{t \in [t_0-\delta_k,t_0]} \sup_{x \in \mathbb{R}^d} \frac{|D_{h_{t_0,x_0,\delta_k}} u(t,x)|}{1 + |x-x_0|^\theta } \leq  M(\omega) \frac{Q_\Lambda(\delta_k)}{\delta_k^{\frac{1}{2}}} \text{ for all } k \in \mathbb{N}.
\end{equation}
In this proof, we allow the value of $M(\omega)$ to change from line to line as long as its value remains independent of $\delta_k$.

Let us turn to the estimate on $A$ in \eqref{d3}. Owing to \eqref{eq:f-exp-bound}, \eqref{d4} and \eqref{d5}, we easily obtain
\begin{multline*}
|A_{h_{t_0,x_0,\delta_k}}(t_0,x_0)|
  \leq
  \int_{t_0-\delta_k}^{t_0} \int_{\mathbb{R}^d} G(t_0-s, x_0-y)
  \exp\left( M(\omega) |x_0-y|^{\theta \nu} \right) \\
\times  M(\omega) (1 + |y-x_0|^\theta) \frac{Q_\Lambda(\delta_k)}{\delta_k^{\frac{1}{2}}}dyds.
\end{multline*}
By increasing the value of $M(\omega)$, the integrand in the above expression is bounded by
\begin{equation}\label{d6}
  (2\pi (t_0-s))^{-\frac{d}{2}} M(\omega)\exp \left( - \frac{|y-x_0|^2}{2(t_0-s)} +  M(\omega) |x_0-y|^{\theta \nu} \right) Q_\Lambda(\delta_k)\delta_k^{-\frac{1}{2}}.
\end{equation}
We now bound the term
\begin{equation}\label{d7}
\crr_{\om,\nu,\te}
\equiv
-\frac{|y-x_0|^2}{2(t_0-s)} +  M(\omega) |x_0-y|^{\theta \nu},
\end{equation}
which appears in the exponent in \eqref{d6}. Namely recast this exponent as
\begin{equation*}
\crr_{\om,\nu,\te}
=
- \frac{|y-x_0|^2}{2(t_0-s)} + K M(\omega)(2(t_0-s))^{\frac{\theta \nu}{2}} \left(\frac{|x_0-y|}{\sqrt{2(t_0-s)}}\right)^{\theta \nu} \, .
\end{equation*}
Next because $\theta \nu<2$, Young's inequality with powers $\frac{2}{\nu \theta}$ and $\frac{2}{2-\nu \theta}$ in the right hand side above  proves that $\crr_{\om,\nu,\te}$ satisfies 
\begin{equation*}
\crr_{\om,\nu,\te}
\le
- \frac{|y-x_0|^2}{2(t_0-s)}  + \frac{\theta \nu|y-x_0|^2}{4(t_0-s)}
+ \frac{(2-\nu\theta)(M(\omega))^{\frac{2}{2-\nu \theta}} (2(t_0-s))^{\frac{\theta \nu}{2 - \nu \theta}} }{2}.
\end{equation*}
In addition, since $(t_0-s) < \delta_k \leq 1$, the third term in the above expression is uniformly bounded with respect to $s$ and independent of $y$. Therefore $|A_{h_{t_0,x_0,\delta_k}}(t_0,x_0)|$ is bounded by
\begin{multline*}
  |A_{h_{t_0,x_0,\delta_k}}(t_0,x_0)| \\
  \leq M(\omega)Q_{\Lambda}(\delta_k)\delta_k^{-\frac{1}{2}} \int_{t_0-\delta_k}^{t_0} \int_{\mathbb{R}^d} (2 \pi (t_0-s))^{-\frac{d}{2}} \exp \left(- \left(1- \frac{\theta\nu}{2}\right)\frac{|x_0-y|^2}{2(t_0-s)} \right)dyds.
\end{multline*}
The above integral over $\mathbb{R}^d$ is the integral of a Gaussian density and its value does not depend on $s$. Therefore, owing to the fact that the time interval is length $\delta_k$, we end up with
\begin{equation}
  |A_{h_{t_0,x_0,\delta_k}}(t_0,x_0)| \leq M(\omega) Q_{\Lambda}(\delta_k) \delta_k^{\frac{1}{2}} \, ,
\end{equation}
with probability one.
This proves that
\begin{equation}
  \lim_{k \to \infty}\frac{|A_{h_{t_0,x_0,\delta_k}}(t_0,x_0)|}{Q_\Lambda(\delta_k)} = 0 \, , \quad \text{ with probability one.}
\end{equation}
Our claim \eqref{d31} is proved.
\end{proof}

Now we can establish the main result of this subsection, which is that there exists a subsequence $\delta_k \downarrow 0$ such that with probability one,  for small values of $\delta_k$, $|A_{h_{t_0,x_0,\delta_k}}(t_0,x_0)|$  and  $|B_{h{\delta_k,t_0,x_0}}(t_0,x_0)|$ are much smaller than $Q_{\Lambda}(\delta_k)$.
\begin{proposition} \label{thm:A-B-conv-as}
Consider $(t_{0},x_{0})\in (0,T]\times\R^{d}$ and the sequence $\{\delta_{k}=2^{-k}; \, k\ge 1\}$. The function $h_{t_0,x_0,\delta_{k}}$ is introduced in~\eqref{eq:h-delta-def}, and the terms $A,B$ are respectively given by \eqref{eq:A_h-delta-interval}-\eqref{eq:B_h-delta-interval}. We assume that the coefficients $b,\si$ satisfy Assumptions~\ref{assum:sig}--\ref{assum:f}.
  Then there exists a subsequence of $\delta_{k}$, still denoted $\delta_k$, such that
  \begin{equation}
     \Pro \left(\lim_{k \to \infty} \frac{ |A_{h_{t_0,x_0,\delta_k}}(t_0,x_0)| + |B_{h_{t_0,x_0,\delta_k}}(t_0,x_0)|}{Q_\Lambda(\delta_k)} = 0 \right) =1.
  \end{equation}
\end{proposition}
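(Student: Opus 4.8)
The plan is to simply combine the two estimates already established in Lemma~\ref{lem:A_h-a.s.} and Corollary~\ref{cor:B-almost-sure}, the only subtlety being that the Lebesgue term $A$ converges along the whole dyadic sequence $\delta_k = 2^{-k}$, whereas for the stochastic term $B$ we have merely convergence in probability (Lemma~\ref{lem:B_h-in-prob}) and therefore only almost sure convergence along a subsequence.

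First I would invoke Lemma~\ref{lem:A_h-a.s.}, which directly yields that along the full sequence $\delta_k = 2^{-k}$ we have
\[
  \Pro\left( \lim_{k \to \infty} \frac{|A_{h_{t_0,x_0,\delta_k}}(t_0,x_0)|}{Q_\Lambda(\delta_k)} = 0 \right) = 1.
\]
Next I would apply Corollary~\ref{cor:B-almost-sure} to this same sequence $\delta_k = 2^{-k}$ to extract a subsequence, which I relabel $\delta_k$, along which
\[
  \Pro\left( \lim_{k \to \infty} \frac{|B_{h_{t_0,x_0,\delta_k}}(t_0,x_0)|}{Q_\Lambda(\delta_k)} = 0 \right) = 1.
\]
Since a subsequence of an almost surely convergent sequence converges almost surely to the same limit, the displayed limit for the $A$ term persists along this relabelled subsequence. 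Intersecting the two probability-one events and using subadditivity of the modulus,
\[
  \frac{|A_{h_{t_0,x_0,\delta_k}}(t_0,x_0)| + |B_{h_{t_0,x_0,\delta_k}}(t_0,x_0)|}{Q_\Lambda(\delta_k)}
  \le \frac{|A_{h_{t_0,x_0,\delta_k}}(t_0,x_0)|}{Q_\Lambda(\delta_k)} + \frac{|B_{h_{t_0,x_0,\delta_k}}(t_0,x_0)|}{Q_\Lambda(\delta_k)} \longrightarrow 0
\]
with probability one as $k \to \infty$, which is the claim.

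There is no real obstacle here: the analytic work has already been done in the preceding lemmas (the delicate $\delta_k^{1/2}$ gain in Lemma~\ref{lem:Dhu-as-conv} and the Young's inequality argument in Lemma~\ref{lem:A_h-a.s.} for $A$, and the Chebyshev plus $C_{\delta,p,\theta}\to 0$ argument for $B$). The single point worth stating explicitly is why a subsequence must be passed to at all, namely that $B_{h_{t_0,x_0,\delta}}(t_0,x_0)/Q_\Lambda(\delta)$ is only known to converge in probability; once the subsequence is fixed the $A$ term causes no trouble because its convergence already holds along every dyadic sequence.
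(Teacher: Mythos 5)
Your proposal is correct and follows essentially the same route as the paper: take the dyadic sequence on which Lemma~\ref{lem:A_h-a.s.} gives almost sure convergence of the $A$ term, extract via Corollary~\ref{cor:B-almost-sure} a further subsequence on which the $B$ term converges almost surely, and intersect the two full-measure events. Your explicit remark that the $A$-term convergence persists along the relabelled subsequence is a small clarification that the paper leaves implicit, but the argument is identical.
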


\begin{proof}
  Let $\delta_k$ be a subsequence from Lemma \ref{lem:A_h-a.s.} along which
  \begin{equation}
    \Pro \left(\lim_{k \to \infty} \frac{ |A_{h_{t_0,x_0,\delta_k}}(t_0,x_0)| }{Q_\Lambda(\delta_k)} = 0 \right) =1.
  \end{equation}
  Then thanks to Corollary~\ref{cor:B-almost-sure},  there is a subsequence of $\delta_k$ (relabeled as $\delta_k$) along which
  $\lim_{k\to\infty}Q_\Lambda(\delta_k)^{-1}|B_{h_{t_0,x_0,\delta_k}}(t_0,x_0)|=0$ with probability one. This proves our result.
\end{proof}

\subsection{Positivity of the Malliavin derivative} \label{SS:Du-lower}
In this section we prove that for any $t_0\in (0,T]$ and $x_0 \in \mathbb{R}^d$  we have $\|Du(t_0,x_0)\|_{\H_T}>0$ almost surely.
This will allow to establish the existence of a density for the random variable $u(t_{0},x_{0})$.

\begin{theorem} \label{thm:positivity}
  Let $t_0\in (0,T]$ and $x_0 \in \mathbb{R}^d$. We assume that the coefficients $b,\si$ satisfy Assumptions~\ref{assum:sig}--\ref{assum:f} and we consider the solution $u$ to equation~\eqref{eq:mild-solution}. Then the following holds true:
  \begin{equation*}
    \Pro(\|Du(t_0,x_0)\|_{\H_T}>0)=1.
  \end{equation*}
\end{theorem}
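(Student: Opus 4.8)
The plan is to assemble the final statement out of the three ingredients prepared in the preceding subsections: the decomposition \eqref{eq:Dhu-decomposed} of the directional Malliavin derivative, the lower bound \eqref{eq:phi-h-product-lower} on its leading term, and the almost-sure smallness of the remainder proved in Proposition \ref{thm:A-B-conv-as}. First I would reduce, exactly as in \eqref{c1}, the inequality $\|Du(t_0,x_0)\|_{\H_T}>0$ a.s.\ to exhibiting, with probability one, a (random) $h\in\H_T$ with $0<\|h\|_{\H_T}<\infty$ and $D_h u(t_0,x_0)>0$: indeed $u(t_0,x_0)\in\D^{1,p}$ by Theorem \ref{thm:u-mall-diff}, so $D_h u(t_0,x_0)=\langle Du(t_0,x_0),h\rangle_{\H_T}$ and hence $\|Du(t_0,x_0)\|_{\H_T}\ge D_h u(t_0,x_0)/\|h\|_{\H_T}$.

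Next I would take $h=h_{t_0,x_0,\delta}$ from \eqref{eq:h-delta-def-1}, for which $\|h_{t_0,x_0,\delta}\|_{\H_T}^2=Q_\Lambda(\delta)$; this quantity is strictly positive for every $\delta\in(0,t_0]$ because $\mu$ is a nonzero positive measure, so that $\int_{\mathbb{R}^d}e^{-s|\xi|^2}\mu(d\xi)>0$ for each $s>0$, and it is finite by the strong Dalang condition. Evaluating \eqref{eq:Dhu-decomposed} at $(t,x)=(t_0,x_0)$ and using \eqref{eq:phi-h-product-lower} together with $A_{h_{t_0,x_0,\delta}}(t_0,x_0)+B_{h_{t_0,x_0,\delta}}(t_0,x_0)\ge -\big(|A_{h_{t_0,x_0,\delta}}(t_0,x_0)|+|B_{h_{t_0,x_0,\delta}}(t_0,x_0)|\big)$ gives, with probability one,
\[
  D_{h_{t_0,x_0,\delta}}u(t_0,x_0)\ \ge\ \alpha\,Q_\Lambda(\delta)-|A_{h_{t_0,x_0,\delta}}(t_0,x_0)|-|B_{h_{t_0,x_0,\delta}}(t_0,x_0)|.
\]
Now I would invoke Proposition \ref{thm:A-B-conv-as}: along the subsequence $\{\delta_k\}$ it provides, the ratio $\big(|A_{h_{t_0,x_0,\delta_k}}(t_0,x_0)|+|B_{h_{t_0,x_0,\delta_k}}(t_0,x_0)|\big)/Q_\Lambda(\delta_k)$ tends to $0$ almost surely; on that probability-one event there is a random index $k_0(\omega)$ with this ratio $<\alpha/2$ for all $k\ge k_0(\omega)$, whence $D_{h_{t_0,x_0,\delta_k}}u(t_0,x_0)\ge\frac{\alpha}{2}Q_\Lambda(\delta_k)>0$ and therefore
\[
  \|Du(t_0,x_0)\|_{\H_T}\ \ge\ \frac{D_{h_{t_0,x_0,\delta_k}}u(t_0,x_0)}{\sqrt{Q_\Lambda(\delta_k)}}\ \ge\ \frac{\alpha}{2}\sqrt{Q_\Lambda(\delta_k)}\ >\ 0 .
\]

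Since the genuinely delicate estimates have all been carried out in Sections \ref{SS:lower-and-upper-bounds}--\ref{SS:integrals-upper}, the only points requiring attention in this final argument are minor: noting $Q_\Lambda(\delta)>0$ strictly, and observing that the choice of $\delta_k$ may be made on a single probability-one event because Proposition \ref{thm:A-B-conv-as} gives almost-sure convergence along a fixed deterministic sequence. The main obstacle of the whole section lies upstream — controlling the Lebesgue integral $A_{h_{t_0,x_0,\delta_k}}$ in Lemma \ref{lem:A_h-a.s.}, where the superlinear growth of $f'$ is tamed by a Young-type inequality using $\theta\nu<2$ and the extra $\delta_k^{1/2}$ factor from Lemma \ref{lem:Dhu-as-conv}. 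Finally, combining the positivity just established with $u(t_0,x_0)\in\D^{1,p}$ (Theorem \ref{thm:u-mall-diff}) and Proposition \ref{prop:malliavin>0} yields the existence of a density, i.e.\ Theorem \ref{thm:dens-u}.
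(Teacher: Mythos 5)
Your proposal is correct and follows essentially the same route as the paper's proof: decompose $D_{h_{t_0,x_0,\delta}}u(t_0,x_0)$ via \eqref{eq:Dhu-decomposed}, bound the leading term below by $\alpha Q_\Lambda(\delta)$ using \eqref{eq:phi-h-product-lower}, and absorb $A$ and $B$ along the subsequence from Proposition \ref{thm:A-B-conv-as}. The only difference is that you make explicit two points the paper leaves implicit (that $Q_\Lambda(\delta)>0$ and that $\|h_{t_0,x_0,\delta}\|_{\H_T}^2=Q_\Lambda(\delta)$, so a positive directional derivative forces $\|Du(t_0,x_0)\|_{\H_T}>0$), which is a harmless and indeed welcome addition.
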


\begin{proof}
  Let $h_{t_0,x_0,\delta}$ be defined by \eqref{eq:h-delta-def}. In \eqref{eq:Dhu-decomposed} we decomposed
  \begin{equation*}
    D_{h_{t_0,x_0,\delta}}u(t_0,x_0) = \left<\phi_{t_0,x_0},h_{t_0,x_0,\delta}\right>_{\H_T} + A_{h_{t_0,x_0,\delta}}(t_0,x_0)  + B_{h_{t_0,x_0,\delta}}(t_0,x_0).
  \end{equation*}
  Moreover, in \eqref{eq:phi-h-product-lower}, we proved that with probability one,
  \begin{equation*}
    \left<\phi_{t_0,x_0},h_{t_0,x_0,\delta}\right>_{\H_T} \geq \alpha Q_{\Lambda}(\delta).
  \end{equation*}
  By Theorem \ref{thm:A-B-conv-as} there exists a subsequence $\delta_k \downarrow 0$ such that
  \begin{equation*}
    \lim_{k \to \infty} \frac{ |A_{h_{t_0,x_0,\delta_k}}(t_0,x_0)| + |B_{h_{t_0,x_0,\delta_k}}(t_0,x_0)|}{Q_\Lambda(\delta_k)} = 0.
  \end{equation*}
  Therefore, with probability one, there exists a (random) $k(\omega)$ such that
  \begin{equation*}
    D_{h_{\delta_{k(\omega)},t_0,x_0}}u(t_0,x_0)>0.
  \end{equation*}
  This implies that $\|Du(t_0,x_0)\|_{\H_T}>0$ with probability one, because at least one of the directional derivatives is nonzero.
\end{proof}

We conclude this paper by proving our main result for the density of $u(t_0,x_0)$.

\begin{proof}[Proof of Theorem \ref{thm:dens-u}]
According to Proposition~\ref{prop:malliavin>0}, we have to check that the random variable $F=u(t_0,x_0)$ belongs to the Malliavin-Sobolev space $\D^{1,p}$, and that $\Pro(\|Du(t_0,x_0)\|_{\H_T}>0)=1$.
Now the fact that $u(t_0,x_0)\in\D^{1,p}$ is established in Theorem~\ref{thm:u-mall-diff}, while the condition $\Pro(\|Du(t_0,x_0)\|_{\H_T}>0)=1$ is the contents of Theorem~\ref{thm:positivity}. This implies that the law of $u(t_0,x_0)$ is absolutely continuous with Lebesgue measure.
\end{proof}

\section*{Acknowledgements}
The first author was partially supported by the Simon's Foundation Award 962543. 
The second author was partially supported by the NSF grant  DMS-1952966.

\end{document}